\documentclass{article}
\usepackage{graphicx} 
\usepackage[a4paper, total={6in, 8in}]{geometry}
\usepackage{amsmath, amsfonts, amsthm}
\usepackage{booktabs}
\usepackage{color}
\usepackage{xcolor}
\usepackage{enumitem} 
\usepackage{microtype}
\usepackage{float}

\allowdisplaybreaks 
\def\R{\mathbb{R}}

\def\cE{\mathcal{E}}

\def\m{\mathfrak{m}}

\newcommand{\E}{\mathbb{E}}

\newcommand*\der{\mathop{}\!\mathrm{d}}

\newtheorem{theorem}{Theorem}[section]
\newtheorem{lemma}[theorem]{Lemma}

\newtheorem{remark}{Remark}[section]

\numberwithin{equation}{section}
\newtheorem{assumption}{Assumption}

\usepackage{mathrsfs}
\providecommand{\keywords}[1]{%
  {\small\noindent\textbf{\textit{Key words.}}\ #1}%
}

\providecommand{\msc}[1]{%
  {\small\noindent\textbf{\textit{AMS subject classifications.}}\ #1}%
}

\title{Treasure Search Optimization}
\author{A. Sharma}
\date{}

\begin{document}

\maketitle
\begin{abstract}

We introduce Treasure Search Optimization (TSO), an interacting particle method for global optimization. Most swarm methods balance exploration and exploitation within a single population, and typically switch between the two by degenerating the noise, annealing a temperature, or tuning a parameter. TSO instead splits these tasks across two kinds of agents. A swarm of explorers stays in exploration mode and a single treasure hunter performs exploitation. The hunter drifts toward an objective-weighted average of the explorers and may teleport to it when the move lowers the objective. The swarm then re-centers around the hunter, creating a feedback loop between search and capture. We model the dynamics as coupled jump-diffusion stochastic differential equations (SDEs). The hunter's jumps are shared by all explorers and act as a common noise. The mean-field limit is therefore a conditional McKean-Vlasov jump-diffusion SDE, whose well-posedness we prove. We also characterize the steady state and prove, via Laplace approximation techniques, that the hunter settles near the global minimum with error of order $1/\alpha$, where $\alpha$ is the weight parameter. Linking the consensus drift to a smoothed free energy, we explain why the swarm ignores spurious local traps and demonstrate how to quantify uncertainty in inverse problems using post-processing Kalman steps after TSO iterations. Numerical experiments on ODE-constrained problems and a low dimensional Bayesian inverse problem demonstrate the effectiveness of the TSO method.

\keywords{
Conditional McKean-Vlasov stochastic differential equations, common noise, global optimization, Laplace approximation.
}

\msc{90C26, 60H10, 65K10, 65C30, 65C35, 60J76
}

\end{abstract}

\section{Introduction}

Interacting particle systems (IPS) provide a flexible modeling framework where  evolution of large populations of agents is governed by deterministic forces, random fluctuations and interaction forces such as attraction, repulsion, synchronization, competition, or cooperation. They arise naturally in social and natural sciences, engineering, and machine learning, with applications ranging from collective animal behavior, opinion formation, and crowd dynamics to swarm robotics \cite{albi2019vehicular,SY_Tadmor_flocking,bartel2026multi, bellomo2017quest, motsch_tadmor, ant_colony, handbook_evo_comp}.

\paragraph{IPS based sampling and optimization.}
The recent surge of interest in SDEs governed interacting particle systems (IPS) for optimization and sampling is driven by their ability to overcome the limitations of classical algorithms in anisotropic non-convex landscapes. An isolated particle may get trapped in local minimum, IPS leverage collective dynamics, sharing information across the swarm, to efficiently navigate complex spaces. Specifically, in non-convex optimization, interaction mechanisms such as attraction, consensus, or selection drive the swarm toward promising regions. In sampling, the empirical particle distribution can approximate complex target measures, with suitable interactions such as ensemble covariance preconditioning or repulsion. Crucially, many of these methods do not require gradient information, making them ideal for complex inverse problems. 

Consequently, a wide variety of successful algorithms have emerged. Important examples of IPS based methods for optimization and sampling include  ensemble Kalman inversion optimizer \cite{iglesias2013ensemble, chada2020tikhonov,chada2022convergence}, Stein variational gradient descent \cite{liu2019stein}, Fokker-Planck interacting particle systems \cite{reich2021fokker}, ensemble Kalman-Langevin samplers \cite{garbunoinigo2019interacting,garbuno2020affine,ringh2025kalman},   particle swarm optimization \cite{kennedy_particle_1995} (and its continuous-time version \cite{grassi2023mean}), consensus-based optimization (CBO) \cite{pinnau2017consensus,carrillo2018analytical,totzeck2021trends},   consensus-based sampling \cite{carrillo2022consensus_sampling}, swarm-based annealing \cite{ding2024swarm}, swarm gradient descent \cite{bolte2024swarm}, ensemble-based gradient inference \cite{schillings2023ensemble}, ensemble affine invariant samplers \cite{goodman2010ensemble} and the Boltzmann sampler \cite{chen2024bayesian_boltzmann}.  All of these methods use collective dynamics to balance exploration and exploitation in complex non-convex landscapes.

Several variants and extensions also exist, for example polarized or multi-swarm CBO \cite{bungert2025polarized,klamroth2024consensus}, constrained  methods \cite{borghi2023constrained, bae2022constrained, cbo_eki_constrained, bungert2025mirrorcbo}, ensemble Kalman-Langevin multiscale sampler \cite{pavliotis2022derivative}, models with jumps \cite{kalise2023consensus,borghi2025swarm_bgk},  control strategies for optimization \cite{molin2025controlled,huang2026fast, aubert2026controlled} and consensus-based dynamics in infinite-dimensional setting \cite{huang2026derivative}. Specifically, for constrained settings, several of the above models can be formulated in terms of mean-field reflected SDEs, studied in \cite{adams2022large} for convex domains and in \cite{hinds2025well} for non-convex domains.

While these existing methods successfully leverage collective dynamics, they typically employ a homogeneous swarm. To combine the exploratory capabilities of samplers with the targeted exploitative capabilities of optimizers within a single population one often relies on noise annealing or noise degeneracy. However, this forces a severe trade-off: it either leads to premature collapse (in the case of degenerate noise)  or exponentially slow escape from local minima (in the case of annealing via time-dependent noise coefficient). Motivated by the need to explicitly decouple these conflicting tasks, we introduce a dual mechanism interacting particle system. 

\paragraph{Treasure Search Optimization.} In this paper, we propose a multi-agent stochastic model governed by two coupled interaction mechanisms. The system consists of $N$ `treasure explorers' who constantly remain in exploratory mode forming a swarm around a time-varying center. Even if initialized from the same point, these explorers quickly disperse to form a time-varying cloud. 
The explorers drift, diffuse and jump but according to a particular mechanism designed to drive the system toward a steady state. If one of the explorers makes a rare discovery, the  signal is amplified, which is captured by the `treasure hunter' with a positive probability through a jumping mechanism. Once the jump is realized, the swarm of explorers begins re-centering around the treasure hunter in the landscape, and therefore establishing a feedback loop. This results in a continuous interplay between the wider exploration by the swarm and the targeted exploitation by the hunter. We term the above described dynamics Treasure Search Optimization (TSO).

The central idea of Treasure Search Optimization is therefore architectural rather than tied to a specific interaction force: it prescribes how exploration and exploitation are separated across two distinct types of agents.

\paragraph{Contributions.} The contributions of the paper are as follows.
\begin{itemize}
\item[(i)] We formalize the above proposed hunter-explorer dynamics in terms of jump-diffusion SDE. We then contrast its governing dynamics and interaction mechanisms with other multi-agent models.
 Because the explorers are subject to a common noise driven by the Poisson jumps of the treasure hunter, the limiting mean-field dynamics is a conditional McKean-Vlasov jump-diffusion SDE. We establish the well-posedness of this system, which requires certain adaptations beyond the standard Leray-Schauder or Banach fixed-point arguments typically employed in McKean-Vlasov literature.

 \item[(ii)] We characterize the steady state of the coupled dynamics and prove the proximity of the treasure hunter's steady-state position to the global minimum of the objective function.

\item[(iii)] Focusing on the random conditional mean ODE corresponding to the explorers' mean-field dynamics, we investigate the underlying macroscopic gradient structure in several different settings. By formally connecting the consensus drift to the gradient of a ``smoothed" free energy, we characterize the macroscopic behavior of the swarm. This link explains why the collective swarm is effectively  ``blind" to small, spurious local peaks, and thus highlighting the dichotomy between the landscape perceived by the swarm versus individual particles. This insight provides a qualitative lens for analyzing CBO and CBS methods.

\item[(iv)] Finally, we demonstrate the practical efficacy of the proposed method through numerical experiments with ODE-constrained optimization problems and with a Bayesian inverse problem.
\end{itemize}

\paragraph{Organization.}
The remainder of this paper is organized as follows. Section~\ref{tso_section_2} details the SDE formulation of the proposed mechanism and situates it within the broader literature. In the same section, we establish well-posedness of the mean-field SDE. In Section~\ref{tso_steady_state_subsec}, we present the steady-state characterization and our theoretical results regarding closeness to global minimum. Section~\ref{conditional_mean_ode_section} explores the gradient structure of the conditional mean ODE and the free energy framework. Finally, Section~\ref{numerical_exp_sec} provides supporting numerical illustrations.

\paragraph{Notation.} Let $f : \mathbb{R}^{d} \to \mathbb{R}$ be the objective function. For vectors $a, b \in \mathbb{R}^d$, their scalar product is denoted by $(a \cdot b)$ and Euclidean norm by $|a|$. We denote by $\mathcal{P}(\mathbb{R}^{d})$ the space of probability measures on $\mathbb{R}^d$ and with $\mathcal{P}_p(\mathbb{R}^d)$ a subset of it with bounded $p$-th moments. For any two probability measures $\mu, \nu \in \mathcal{P}_2(\mathbb{R}^d)$, we define the $2$-Wasserstein distance as
\begin{align}
    \mathcal{W}_2(\mu, \nu) = \left( \inf_{\pi \in \Pi(\mu, \nu)} \int_{\mathbb{R}^d \times \mathbb{R}^d} |x - y|^2 \pi(\der x, \mathrm{d}y) \right)^{1/2},
\end{align}
where $\Pi(\mu, \nu)$ denotes the set of all couplings (joint distributions) on $\mathbb{R}^d \times \mathbb{R}^d$ with marginals $\mu$ and $\nu$.  For an $\alpha > 0$, we define a weighted average as
\begin{align}
   \m^\alpha(\mu) =  \frac{1}{\int_{\mathbb{R}^d} w^{\alpha, f} \mu(\der x)} \int_{\R^d} x w^{\alpha, f} \mu(\der x), 
\end{align}
where $w^{\alpha, f} : \mathbb{R}^{d} \rightarrow (0,\infty)$ represents the weight function. In this paper, we fix $w^{\alpha , f} = e^{-\alpha f} $. 
Finally, we denote $\|v\|_\infty := \sup_{t \in [0,T]} |v_t| $ for $v \in C([0,T];\mathbb{R}^d)$ where $C([0,T];\mathbb{R}^d)$ represents the space of continuous functions defined on $[0,T]$ and taking values in $\mathbb{R}^d$.

\section{Treasure Search Optimization}\label{tso_section_2}

Let $N \in \mathbb{N}$ denote the number of explorers. Let $X^{i, N}_t\in  \mathbb{R}^{d}$ denote the position of $i$-th explorer among $N$ explorers. Let $Y_t \in \mathbb{R}^{d}$ denote the position of treasure hunter at time $t$.  Also, consider $d$-dimensional Wiener processes $W^{i}$, $i=1,\dots,N$. Also, $N^{i}(\mathrm{d}t,\der z)$, $i=1,\dots,N$, are independent Poisson random measures with intensity
$\lambda_J \nu(\der z)\mathrm{d}t$, where $\nu(\der z)\propto e^{-|z|^2/2}\der z$. Let $N^{Y}_t$ be a Poisson clock with intensity $\lambda_Y$.  Let the empirical measure of these $N$ explorers be denoted by 
\begin{align}
    \cE^{N}(\cdot) := \frac{1}{N}\sum\limits_{i=1}^{N}\delta_{X^{i,N}}(\cdot).
\end{align}
The explorers evolve as
\begin{align}\label{tso_eqn_2.2}
\der X_t^{i,N}
&=
\underbrace{-\eta_1\bigl(X_t^{i,N}-\m^\alpha(\mathcal E_t^N)\bigr) -\eta_2\bigl(X_t^{i,N} - Y_t\bigr)}_{=b(X^{i,N}_t, Y_t, \mathcal{E}_t^N)}\der t
+
\sigma \der W_t^i \nonumber 
\\
&\quad+
\int_{\mathbb R^d}
\Big[
(1-\cos\phi)
\bigl(-X_{t^-}^{i,N}+\mathrm{M}^\alpha(\mathcal E_t^N,Y_t)\bigr)
+
\sigma_J\sin\phi\,z
\Big]
N^i(\der t,\der z),
\end{align}
and the hunter update is given by 
\begin{align}\label{tso_eqn_2.3}
    \der Y_t
    = -\beta\bigl(Y_t - \mathfrak{m}^\alpha(\cE_t^N)\bigr)\der t +
    \bigl(\mathfrak{m}^\alpha(\cE_t^N)-Y_{t^-}\bigr)
    \mathbf 1_{\{f(\mathfrak{m}^\alpha(\mathcal{E}_t^N))<f(Y_{t^-})\}}
    \der N_t^Y,
\end{align} 
where 
\begin{align}\label{tso_eqn_2.4}
\mathrm{M}^\alpha(\mathcal E_t^N,Y_t) =     \frac{\kappa_1 \mathfrak{m}^\alpha(\mathcal E_t^N)+\kappa_2Y_t}{\kappa_1+\kappa_2},
\end{align}
 and $\eta_1, \eta_2, \sigma, \kappa_1, \kappa_2, \beta >0$ and  $\phi\in(0,\pi/2)$.

We use the notation
$b(X_t^{i,N},Y_t,\mathcal E_t^N)$ to emphasize that the deterministic explorer interaction need not be restricted to the
particular consensus-hunter drift displayed in \eqref{tso_eqn_2.2}. Other gradient-free interaction fields depending on the explorer position,
the hunter, and the empirical ensemble can be incorporated, either in
addition to or in place of the consensus-hunter drift. Such choices may also alter
the effective noise structure of the interacting particle system. 

For example, a repulsive variant (see \cite{hinds2025well}), in the spirit of Stein variational dynamics, augments the consensus-hunter pull with a kernel-based repulsion that discourages the explorers from crowding,
\begin{align}\label{tso_eqn_b_stein}
    b = -\eta_1\bigl(X_t^{i,N} - \m^\alpha\bigr) - \eta_2\bigl(X_t^{i,N} - Y_t\bigr) + \frac{\eta_3}{N}\sum_{j=1}^N \nabla_2 K\bigl(X_t^{i,N}, X_t^{j,N}\bigr),
\end{align}
where $K$ is a smooth kernel and $\nabla_2$ denotes the gradient in the second argument.
Other possible choices include ensemble cross-covariance or covariance interactions in the spirit of ensemble Kalman methods, or polarized interactions.  This emphasizes that the general TSO structure of splitting the search by explorers and exploitation of searched space by hunters is not specific to a particular choice of explorer interaction.

We refer to the dynamics described by \eqref{tso_eqn_2.2}-\eqref{tso_eqn_2.4} as the \textit{Treasure Search Optimization} method, or $\mathrm{TSO}$ for short. Although the explorer interaction can be modified within this framework,
we focus in the remainder of the paper on the particular choice specified in
\eqref{tso_eqn_2.2}.
\vspace{-10pt}
 \paragraph{Mechanism of treasure explorers' dynamics:}
 In \eqref{tso_eqn_2.2}, $\eta_1$ and $\eta_2$ respectively control the attraction to the weighted average $\mathfrak{m}^{\alpha}(\mathcal{E}^N_t)$ and the hunter $Y_t$, with continuous diffusive exploration driven by $\sigma>0$. 
 The jump update combines directed attraction and random exploration: the factor $\cos(\phi)$ retains the explorer's pre-jump position, the term $(1- \cos(\phi))$ pulls toward augmented target point $\mathrm{M}^\alpha$, while $\sigma_J \sin(\phi) z$ introduces random exploratory jumps.   
 \vspace{-10pt}
 \paragraph{Mechanism of treasure hunter's dynamics:}
Between consecutive jump times, the hunter evolves by following the current weighted average. At each Poisson jump, the hunter updates its position toward the consensus only if this move leads to an improvement in the objective value. Once the hunter reaches a promising basin, its position contributes to the average $\mathrm{M}^\alpha$, thereby influencing all explorers.

\paragraph{Mechanism of coupled dynamics:} Therefore, the coupled TSO SDEs \eqref{tso_eqn_2.2}-\eqref{tso_eqn_2.3} describe a collective search process driven by four interconnected phenomena:
\begin{align}
\text{exploration}
\;\Longrightarrow\;
\text{amplification}
\;\Longrightarrow\;
\text{capture}
\;\Longrightarrow\;
\text{feedback}.
\end{align}

 First, the persistent jump-diffusions in the dynamics allow the explorers to sample distant basins. When an explorer happens to land in a region with a lower objective value, the exponential Gibbs weighting amplifies this rare event. The treasure hunter with positive probability captures this rare finding only when it is an improvement. Once updated, the hunter's continuous feedback  pulls the entire cloud of explorers toward the newly discovered basin.

By contrast, an additive noise consensus-based model \cite{additive_CBO_2026}(see also \cite{choi2025modified}) would retain exploration and Gibbs amplification, but without the treasure hunter, it entirely lacks the mechanisms for capturing optimal discoveries and providing feedback to the swarm. If we attempt to remove the noise upon a favorable discovery to induce the event of  ``capture", such an intervention would revert the system to vanilla CBO \cite{pinnau2017consensus, carrillo2018analytical}. Isotropic or anisotropic CBO \cite{carrillo2018analytical, carrillo_shi_cbo} has fast ``local" convergence when initial distribution  has sufficient mass ``close" to global minimum. However, due to degeneracy in noise, the exploratory capabilities are limited if initial distribution does not capture the global  minimum.  Consensus-based sampling (CBS) \cite{carrillo2022consensus_sampling} utilizes a single homogeneous swarm that can operate in two distinct modes. In its sampling regime, the swarm performs exploration and Gibbs amplification. However, it lacks an automatic mechanism to capture a good basin and provide feedback, and requires explicit parameter tuning to force the swarm into an optimizer mode.

Similarly, in ensemble Kalman methods and their Langevin variants, exploration and feedback are provided by the empirical covariance or cross-covariance of a swarm, often supplemented by Brownian noise or artificial inflation of covariance. While the covariance updates provide natural amplification of promising directions, they would achieve capture through ensemble collapse possibly induced by an explicit annealing process. The mass driven annealing approach in \cite{ding2024swarm} is another route which leads to the event of ``capture". By coupling the particles' positions with a dynamic mass-transfer mechanism, the swarm self-organizes into slow-moving exploiters and fast-moving explorers. However, unlike CBO, additive CBO, CBS, EKI (ensemble Kalman inversion) or TSO, the dynamics is gradient dependent.   

In comparison, TSO uses $N$ explorers and one treasure-hunter to portray the exploratory and exploitative behaviors, respectively, but it does not strictly isolate them. Rather, TSO establishes a continuous feedback loop: the swarm's exploration feeds the treasure hunter's jumps, and the hunter's exploitation anchors the swarm's future search, and thus they both navigate the landscape to find treasure.

\paragraph{Multi-species explorers.} The hunter-explorer architecture is not restricted to a single homogeneous swarm. One may instead partition the explorers into $S$ species, indexed by $s \in \{1,\dots,S\}$, each evolving under its own interaction drift $b^{(s)}$ while sharing the common hunter $Y_t$. An explorer of species $s$ may then evolve according to
\begin{align}\label{tso_eqn_multispecies}
    \der X_t^{i,s,N_s}
    &=  b^{(s)}
    \bigl(X_t^{i,s,N_s},Y_t,\mathcal{E}_t^{1,N_1},\dots,\mathcal E_t^{S,N_S}\bigr)\der t + \sigma^{(s)}\der W_t^{i,s} \nonumber\\ &\quad + \int_{\mathbb R^d}
    \Gamma^{(s)}
    \bigl(X_{t^-}^{i,s,N_s},Y_t,\mathcal{E}_t^{s,N_s},z\bigr)
    N^{i,s}(\der t,\der z).
\end{align}
 This heterogeneity allows different species to specialize, some favoring broad dispersal and others local refinement, while the single hunter continues to act as a macroscopic variable coupling back into every species. The dependence of $b^{(s)}$ on the vector
$(\mathcal E_t^{1,N_1},\dots,\mathcal E_t^{S,N_S})$ is meant to allow both intra-species and cross-species interactions. In particular, a given species may depend only on its own empirical measure, while more general choices may couple different species through repulsion, covariance information, or other cross-population interaction mechanisms. We restrict attention to the single-species case in this paper and regard the multi-species dynamics \eqref{tso_eqn_multispecies} as a promising direction for future study.

Subsection~\ref{tso_sec_mean_field} below is devoted to the derivation and well-posedness of the conditional McKean-Vlasov mean-field limit and Subsection~\ref{subsec2.2_conv_rep} provides the convolution representation of conditional law of explorers. 

 \subsection{Mean-field limit} \label{tso_sec_mean_field}
In TSO, the treasure hunter is a macroscopic variable shared by all explorers. Hence the  explorers, as $N \rightarrow \infty$, are not independent unconditionally. Rather, they are conditionally independent given the common randomness generated by the hunter's dynamics. We therefore formulate the mean-field law as a random conditional law.
Let $ \mathcal F_t^0:=\sigma(Y_0,N_s^Y:0\le s\leq t)$,
completed in the usual way, denote the common-noise filtration generated by the hunter's initial condition and Poisson clock. The mean-field law of a representative explorer is defined by
\begin{align}
    \mu_t:=\mathcal L(X_t\mid \mathcal F_t^0).
\end{align}
Equivalently, $\mu_t$ is the $\mathcal F_t^0$-measurable random probability measure characterized by
\begin{align}
    \int_{\mathbb R^d}\varphi(x)\,\mu_t(\der x)
    =
    \mathbb E[\varphi(X_t)\mid\mathcal F_t^0],
    \qquad
    \text{for all bounded measurable }\varphi:\mathbb R^d\to\mathbb R.
\end{align}
Consequently, the consensus point $\mathfrak m^\alpha(\mu_t)$ is itself a random, $\mathcal F^0_t$-adapted process.

 Let $N^{Y}(\der t, \der z)$ denote Poisson random measure on $[0,T]\times \{1\}$ with finite activity and intensity measure $\lambda_Y \delta_1 (\der z)\der t$. 
The mean-field representative explorer satisfies the conditional McKean-Vlasov jump-diffusion SDE:
\begin{align}
\der X_t
&= -\eta_1\bigl(X_t -\mathfrak{m}^\alpha(\mu_t)\bigr)\der t - \eta_2 (X_t- Y_t) \der t + 
\sigma\der W_t \nonumber \\
&\quad + \int_{\mathbb R^d}
\Big((1-\cos(\phi))
\big(
-X_{t^-}+ \mathrm{M}^\alpha(\mu_t,Y_t)
\big) + \sigma_J\sin(\phi)z\Big)
N(\der t,\der z), \label{tso_eqn_mf}
\end{align}
where $N(\der t,\der z)$ is a Poisson random measure with intensity $\lambda_J(2\pi)^{-d/2}e^{-|z|^2/2}\der z \der t$, $\mathrm{M}^\alpha(\mu_t, Y_t) = (\kappa_1 \m^\alpha(\mu_t) + \kappa_2 Y_t)/(\kappa_1 + \kappa_2) $ and the mean-field treasure hunter $Y$ is governed by
\begin{align}\label{tso_eqn_y_meanf}
    \der Y_t = -\beta\big(Y_t -\mathfrak{m}^\alpha(\mu_t)\big) \der t  + \int_{\{1\}}\bigl(\mathfrak{m}^\alpha(\mu_t)-Y_{t^-}\bigr)
    \mathbf 1_{\{f(\mathfrak{m}^\alpha(\mu_t))<f(Y_{t^-})\}} N^{Y}(\der t, \der z). 
\end{align}
The corresponding weak form for the explorer law is the following. For every
test function $\varphi\in C_b^2(\mathbb R^d)$,
\begin{align*}
\frac{\der }{\der t}
\int_{\mathbb R^d}\varphi(x)\,\mu_t(\der x)
= \int_{\mathbb{R}^d}
\mathcal A_{\mu_t}\varphi(x)\,\mu_t(\der x),
\end{align*}
where
\begin{align*}
\mathcal A_{\rho}\varphi(x)
& =
-\eta_1\bigl( (x-\mathfrak{m}^\alpha(\rho))\cdot\nabla\varphi(x)\bigr) -\eta_2\bigl( (x-Y_t)\cdot\nabla\varphi(x)\bigr)
+
\frac{\sigma^2}{2}\Delta\varphi(x)
\nonumber \\   &   \;\;\;+
\lambda_J
\int_{\mathbb R^d}
\left[\varphi(
\mathcal{J}_{\rho, Y_t}(x,z) ) - \varphi(x)
\right] \nu(\der z).
\end{align*}
Here  $\mathcal{J}_{\rho,y}(x,z) = \cos(\phi)x + ( 1- \cos(\phi) )\mathrm{M}^\alpha(\rho, y)+\sigma_J\sin(\phi)z $ with $\mathrm{M}^{\alpha}(\rho, y) = (\kappa_1 \mathfrak{m}^{\alpha}(\rho) + \kappa_2 y)/(\kappa_1 + \kappa_2) $.
In compact form, formally, we can write
\begin{align*}
\partial_t\mu_t =
\eta_1\big(\nabla\cdot (
(x-\mathfrak{m}^\alpha(\mu_t))\mu_t
)\big) + \eta_2\big(\nabla \cdot ((x- Y_t)\mu_t)\big) + 
\frac{\sigma^2}{2}\Delta\mu_t
+ \lambda_J
\left(\mathcal J_{\mu_t,Y_t\#}(\mu_t\otimes\nu)
- \mu_t \right),
\end{align*}
where $\mathcal{J}_{\rho,y\#}(\rho\otimes\nu)$ denotes the pushforward of
$\rho\otimes\nu$ by the map $\mathcal J_{\rho,y}$. Thus, the mean-field TSO is the coupled system
\begin{align}
\partial_t\mu_t
&=
\eta_1\nabla\cdot
\left(
\bigl(x-\mathfrak{m}^\alpha(\mu_t)\bigr)\mu_t
\right) + \eta_2\nabla\cdot
\left(
\bigl(x- Y_t\bigr)\mu_t
\right) + \frac{\sigma^2}{2}\Delta\mu_t
+\lambda_J
\left(\mathcal J_{\mu_t,Y_t\#}(\mu_t\otimes\nu) - \mu_t
\right), \\
\der Y_t  &= -\beta (Y_t - \mathfrak{m}^{\alpha}(\mu_t))\der t + \int_{\{1\}}( -Y_{t^-} + \mathfrak{m}^{\alpha}(\mu_t))  \mathbf 1_{\{f(\mathfrak{m}^\alpha(\mu_t))<f(Y_{t^-})\}} N^Y(\der t,  \der z).
\end{align}

\paragraph{Probabilistic framework.}
 Let $ (\Omega^0,\mathcal{F}^0,\mathbb P^0,\mathbb{F}^0),\, \mathbb F^0=(\mathcal{F}_t^0)_{t\in[0,T]}$,
be a complete filtered probability space satisfying the usual conditions and supporting the common variables $
    Y_0$ and $N^Y=(N_t^Y)_{t\in[0,T]}$,
where $N^Y$ is a Poisson process with intensity $\lambda_Y>0$. Let $
    (\Omega^1,\mathcal F^1,\mathbb{P}^1,\mathbb F^1),\,
    \mathbb{F}^1=(\mathcal F_t^1)_{t\in[0,T]}$,
be another complete filtered probability space satisfying the usual conditions and supporting the idiosyncratic random elements $
    X_0,\, W,\, N(\der t,\der z)$ in \eqref{tso_eqn_mf}. We work on the completed product space
\begin{align}
    (\Omega,\mathcal F,\mathbb P)
    :=
    (\Omega^0\times\Omega^1,
    \mathcal{F}^0\otimes\mathcal F^1,
    \mathbb P^0\otimes\mathbb{P}^1),
\end{align}
with the usual augmentation of $\mathcal F_t:=\mathcal F_t^0\otimes\mathcal F_t^1$.
The common-noise filtration is identified with $\mathbb F^0$ on the product space. For a random variable
$Z:\Omega^0\times\Omega^1\to\mathbb R^d$, we write
\begin{equation}
    \mathcal L^1(Z)(\omega^0)
    :=
    \mathbb P^1\circ Z(\omega^0,\cdot)^{-1}.
\end{equation}
Thus, for every bounded measurable $\varphi:\mathbb R^d\to\mathbb R$,
\begin{align}
    \int_{\mathbb R^d}\varphi(x)\,\mathcal L^1(Z)(\omega^0)(\der x)
    =    \int_{\Omega^1}\varphi(Z(\omega^0,\omega^1))\,\mathbb P^1(\der \omega^1).
\end{align}
If $Z_t$ is $\mathcal F_t$-measurable, then $\mathcal L^1(Z_t)$ is an
$\mathcal F_t^0$-measurable $\mathcal P(\mathbb R^d)$-valued random variable and is a version of
$\mathcal L(Z_t\mid\mathcal F_t^0)$. This product-space representation of conditional laws in the common-noise framework  is provided in \cite[Appendix~A]{hernandez2025conditional} and in \cite{carmona2018probabilistic} for c\`{a}dl\`{a}g processes and continuous processes, respectively. Moreover, if for $\mathbb P^0$-a.e. $\omega^0$
\begin{align}
    \lim_{s\to t}\mathbb E^1
    \bigl[
        |Z_t(\omega^0,\cdot)-Z_s(\omega^0,\cdot)|^2
    \bigr]=0,
\end{align}
then
\begin{equation}
    t\mapsto \mathcal{L}^1(Z_t)(\omega^0)
\end{equation}
is continuous in $\mathcal P_2(\mathbb R^d)$, since
\begin{align}
    \mathcal W_2^2\bigl(\mathcal L^1(Z_t)(\omega^0),\mathcal L^1(Z_s)(\omega^0)\bigr)
    \le
    \mathbb E^1
    \bigl[
        |Z_t(\omega^0,\cdot)-Z_s(\omega^0,\cdot)|^2
    \bigr].
\end{align}
Here and below, $\mathbb E^1$ denotes expectation with respect to $\mathbb P^1$ only.

Within this probabilistic framework, we state the well-posedness result for the smoothed mean-field SDEs, where the indicator function is replaced by a smooth approximation.

\begin{theorem}
  Assume that $f:\mathbb R^d\to\mathbb R$ is bounded from below and satisfies
\begin{align}\label{tso_eqn_f_lowerbnd_2.14}
|f(x)-f(y)|\le L_f(|x|+|y|)|x-y|,\qquad
f(x)- f_{\min}\le c_u(1+|x|^2),
\end{align}
where $f_{\min} := \inf_{\mathbb R^d}f$. Assume that there are constants $M,c_l>0$ such that 
\begin{align}\label{tso_eqn_f_lowerbnd_2.15}
f(x)- f_{\min} \geq c_l |x|^2,\qquad |x|\ge M.
\end{align}
Let $\Psi_\varepsilon:\mathbb R^d\times\mathbb R^d\to[0,1]$ be a smooth approximation of
$
(m,y)\longmapsto \mathbf 1_{\{f(m)<f(y)\}},
$
and assume that the smoothed jump size  $G_\varepsilon(m,y):=(m-y)\Psi_\varepsilon(m,y)
$
is locally Lipschitz and has linear growth:
\begin{align}
|G_\varepsilon(m,y)-G_\varepsilon(\bar m,\bar y)| \le L_\varepsilon(|m-\bar m|+|y-\bar y|),\qquad
|G_\varepsilon(m,y)|\le C_\varepsilon(1+|m|+|y|).
\end{align}
Assume that $W$, $N$, $X_0$ are mutually independent and also independent of $Y_0, \; N^Y$, and that $ \mathbb E\bigl[|X_0|^4+|Y_0|^4\bigr]<\infty$. 
Let $\mathcal{F}^0_t:=\sigma(Y_0,N_s^Y:0\leq s\le t)$
completed in the usual sense. Then there exists a unique c\`{a}dl\`{a}g adapted solution
$(X_t,Y_t)_{t\in[0,T]}$ of the smoothed mean-field system corresponding to \eqref{tso_eqn_mf}-\eqref{tso_eqn_y_meanf} satisfying
\begin{align}
    \sup_{t\in[0,T]}
    \mathbb E\bigl[|X_t|^4+|Y_t|^4\bigr]<\infty.
\end{align}
Moreover, $t\mapsto\mu_t (\omega^0)$ belongs to $C([0,T];\mathcal P_2(\mathbb R^d))$ for $\mathbb P^0$-a.e. $\omega^0$.
\end{theorem}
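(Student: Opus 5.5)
We prove this by a fixed-point argument carried out pathwise in the common noise, $\omega^0$ by $\omega^0$, on a space of (hunter path, measure flow) pairs. The two estimates that matter are an $\mathcal{W}_2$-stability bound for the weighted mean and a uniform moment bound that keeps the iteration in a bounded set.

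\textbf{Step 1: stability of the weighted mean.} We first record the bound used in the CBO literature, adapted to the growth conditions \eqref{tso_eqn_f_lowerbnd_2.14}--\eqref{tso_eqn_f_lowerbnd_2.15}. For $\mu,\hat\mu\in\mathcal P_4(\mathbb R^d)$ with $\int|x|^4\der\mu,\int|x|^4\der\hat\mu\le K$, there is $C_K$ with
\begin{align*}
|\m^\alpha(\mu)-\m^\alpha(\hat\mu)|\le C_K\,\mathcal W_2(\mu,\hat\mu),
\qquad
|\m^\alpha(\mu)|^2\le b_1+b_2\int|x|^2\der\mu .
\end{align*}
The second inequality is a Laplace-type estimate: split the integral into $|x|\le M$ and $|x|>M$ and use the quadratic lower bound on $f-f_{\min}$ outside the ball. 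The first follows by bounding $\int e^{-\alpha f}\der\mu$ from below via Jensen and the upper bound $f-f_{\min}\le c_u(1+|x|^2)$, and then controlling differences through the local Lipschitz bound on $f$ and on $x\mapsto xe^{-\alpha f(x)}$, integrated against an optimal coupling.

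\textbf{Step 2: the fixed-point map.} Fix $\omega^0$. Given a candidate flow $\rho\in C([0,T];\mathcal P_2)$ with uniformly bounded fourth moments, the curve $m_t=\m^\alpha(\rho_t)$ is continuous, hence deterministic for that $\omega^0$. The smoothed hunter equation
\begin{align*}
\der Y=-\beta(Y-m_t)\der t+G_\varepsilon(m_t,Y_{t^-})\der N^Y_t
\end{align*}
is then solved explicitly between the finitely many clock times. Next, the explorer SDE \eqref{tso_eqn_mf}, with $\m^\alpha(\rho_t)$ and $Y_t$ frozen, is a linear jump-diffusion with Lipschitz coefficients. It has a unique strong solution on $\Omega^1$, with
\begin{align*}
\sup_t\mathbb E^1|X_t|^4\le C\Bigl(1+\mathbb E|X_0|^4+\sup_t|m_t|^4+\sup_t|Y_t|^4\Bigr).
\end{align*}
We set $\mathcal T(\rho)_t:=\mathcal L^1(X_t)(\omega^0)$. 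Continuity in $\mathcal W_2$ follows from the $L^2$ continuity remark in the probabilistic framework: jump times have no atoms, so $\mathbb E^1|X_t-X_s|^2\to0$.

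\textbf{Step 3: contraction on a bounded set.} For two inputs $\rho,\hat\rho$ we couple the solutions synchronously, using the same $W$, $N$, $X_0$ and the same $N^Y$. Between clock times the hunter difference obeys Gronwall. At a jump, the Lipschitz bound on $G_\varepsilon$ gives
\begin{align*}
|\Delta(Y-\hat Y)|\le L_\varepsilon\bigl(|m-\hat m|+|Y-\hat Y|\bigr).
\end{align*}
Iterating over the $N^Y_T(\omega^0)<\infty$ jumps gives
\begin{align*}
\sup_{s\le t}|Y_s-\hat Y_s|\le C(\omega^0)\int_0^t \mathcal W_2(\rho_s,\hat\rho_s)\der s+\text{jump terms}.
\end{align*}
The jump terms are handled by working with $\sup_{s\le t}$ and the factor $(1+L_\varepsilon)^{N^Y_T}$. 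The explorer difference obeys
\begin{align*}
\mathbb E^1|X_t-\hat X_t|^2\le C\int_0^t\Bigl(\mathbb E^1|X-\hat X|^2+C_K^2\mathcal W_2^2(\rho,\hat\rho)+|Y-\hat Y|^2\Bigr)\der s .
\end{align*}
This yields
\begin{align*}
\sup_{s\le t}\mathcal W_2^2(\mathcal T\rho_s,\mathcal T\hat\rho_s)\le C(\omega^0)\int_0^t\sup_{r\le s}\mathcal W_2^2(\rho_r,\hat\rho_r)\der s,
\end{align*}
so a Picard iteration, or an exponentially weighted metric, makes $\mathcal T$ a contraction.

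\textbf{Main obstacle: closing the bounded set.} The constant $C_K$ depends on the fourth-moment bound $K$, so we need $\mathcal T$ to map a fourth-moment ball into itself. We would do this with the a priori estimate from Step 1,
\begin{align*}
|m_t|^4\lesssim 1+\mathbb E^1|X_t|^4 .
\end{align*}
Inserting this into the moment bound of Step 2 gives a Gronwall inequality in $\mathbb E^1|X_t|^4+|Y_t|^4$. The constant grows with the number of clock jumps (for instance like $(1+C_\varepsilon)^{4N^Y_T}$), so $K=K(\omega^0)$ is finite almost surely. For the annealed bound $\sup_t\mathbb E[|X_t|^4+|Y_t|^4]<\infty$ we use the Poisson exponential moment $\mathbb E[a^{N^Y_T}]<\infty$ together with the independence of $Y_0$. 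A cleaner alternative, which we would try first, is to run the fixed point for $(\rho,Y)$ jointly on $\Omega$, in the norm $\sup_t\mathbb E[|\cdot|^2]$ restricted to the moment ball, and treat the jump term by stochastic Gronwall.

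\textbf{Remaining checks.} We must show the pathwise solutions are measurable in $\omega^0$ and adapted. Since Picard iterates are built from $\mathcal F^0_t$-adapted objects, this is preserved under limits. Finally, $\mathcal L^1(X_t)$ coincides with $\mu_t=\mathcal L(X_t\mid\mathcal F^0_t)$, which identifies the fixed point with the solution of the conditional McKean-Vlasov system.
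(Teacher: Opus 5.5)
Your Step 3 contraction estimate is sound. The gap is in the step you flag yourself: closing the bounded set. The bound $|m_t|^4\lesssim 1+\mathbb{E}^1|X_t|^4$ holds only at a fixed point. Inside the map, $m_t=\m^\alpha(\rho_t)$ is computed from the \emph{input} flow, so Step 1 gives $|m_t|^4\lesssim 1+\int|x|^4\rho_t(\der x)$, and this equals the output's moment only when $\rho=\mathcal T\rho$. Your Gronwall inequality is therefore an a priori estimate for solutions, not invariance of a fourth-moment ball under $\mathcal T$. Indeed, with your Step 2 bound in sup form, an input with $\sup_t\int|x|^4\rho_t\le K$ yields an output bounded by $C'(1+K)$, where $C'$ contains factors like $e^{CT}$ and $(1+C_\varepsilon)^{4N^Y_T}$. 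So the ball is not mapped into itself. The Picard iterates may then have growing fourth moments, the stability constant $C_K$ of $\m^\alpha$ is not uniform along the iteration, and your estimate $\sup_{s\le t}\mathcal W_2^2(\mathcal T\rho_s,\mathcal T\hat\rho_s)\le C(\omega^0)\int_0^t\cdots$ cannot be iterated. The paper avoids this issue by running Leray--Schauder on consensus paths $u\in C([0,T];\R^d)$. That only needs a bound on $\{u=\theta\mathcal T u\}$, which is exactly the kind of estimate you wrote. The price is proving compactness (a $C^{0,1/2}$ bound plus Arzel\`a--Ascoli) and invoking Kuratowski--Ryll-Nardzewski to select an $\mathbb F^0$-adapted fixed point.

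Your route can be repaired. The input enters $\mathbb{E}^1|X_t|^4$ only through time integrals, via $m_s$ and $Y_s$ in the drift and in the jump compensator. Moreover, $|Y_t|^4\le C_J(1+|Y_0|^4+\sup_{s\le t}|m_s|^4)$ with $J=N^Y_T(\omega^0)$. Hence if $\int|x|^4\rho_t(\der x)\le k(t)$ with $k$ nondecreasing, It\^o's formula gives $\mathbb{E}^1|X_t|^4\le A+B\int_0^t k(s)\der s$. Here $A,B$ depend on $\omega^0$ only through $J$ and $|Y_0|$. The time-weighted set $\{\rho:\int|x|^4\rho_t(\der x)\le Ae^{Bt}\ \text{for all } t\}$ is then invariant under $\mathcal T$, closed for $\sup_t\mathcal W_2$, and contains $\rho_t\equiv\mathcal L(X_0)$. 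So your contraction runs with $K=Ae^{BT}$. The same computation at a fixed point, followed by Gronwall, puts every finite-fourth-moment solution in this set, which gives uniqueness in the full class. Once patched, your approach gives existence and uniqueness in one step. It also yields measurability and adaptedness directly as limits of measurable, non-anticipative iterates, avoiding the selection theorem, though each step still needs the argument of Lemma~\ref{tso_borel_measurability_lemma}. Finally, your use of $\mathbb{E}[a^{N^Y_T}]<\infty$ together with independence of $Y_0$ makes explicit the passage from pathwise constants $C(\omega^0)$ to the annealed bound $\sup_t\mathbb{E}[|X_t|^4+|Y_t|^4]<\infty$, which the paper's proof does not spell out.
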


 We follow the Leray-Schauder argument as used in \cite{carrillo2018analytical} for the standard mean-field CBO equation, with some important modifications. The mean-field law is now the conditional law with respect to the common-noise filtration. The relevant fixed point is therefore constructed pathwise with respect to the common Poisson clock. After proving existence and uniqueness of a fixed point for each common path, we apply Kuratowski-Ryll-Nardzewski selection theorem (see \cite{ch1977convex}) to construct an $\mathbb{F}^0$-adapted stochastic process. 

 Measurable-selection arguments of this type are used in the analysis for conditional McKean-Vlasov SDEs, see for instance \cite{lacker2023superposition} for measurable selection in conditional McKean-Vlasov equations and  \cite{santambrogio2021cucker} for an explicit use in a mean-field game setting. We emphasize, however, that these works do not cover the present setting, where the fixed point is first constructed pathwise with respect to a finite-activity common Poisson clock.

Before beginning the proof, we recall the required stability estimate for the weighted average. Under
\eqref{tso_eqn_f_lowerbnd_2.14}-\eqref{tso_eqn_f_lowerbnd_2.15}, for every $K>0$ there exists a constant
$c_K>0$, depending only on $\alpha$, $L_f$, $c_u$, and $K$, such that, whenever
$\rho,\bar\rho\in\mathcal{P}_4(\R^d)$ satisfy \cite[Lemma~3.2]{carrillo2018analytical}
\begin{align}
    \int_{\mathbb R^d}|x|^4\,\rho(\der x)
    +
    \int_{\mathbb{R}^d}|x|^4\,\bar\rho(\der x)
    \le K,
\end{align}
one has
\begin{equation}
    |\m^\alpha(\rho)-\m^\alpha(\bar\rho)|
    \leq
    c_K \mathcal W_2(\rho,\bar\rho).
    \label{eq:local_lip_m}
\end{equation}
Further, under the condition \eqref{tso_eqn_f_lowerbnd_2.15}, there exist constants $b_1,b_2>0$ such that \cite[Lemma~3.3]{carrillo2018analytical}
\begin{equation}
    |\m^\alpha(\rho)|^2
    \leq
    \int_{\mathbb{R}^d}|x|^2
    \frac{w^{\alpha,f}(x)\rho(\der x)}
    {\int_{\mathbb R^d}w^{\alpha,f} (x)\rho(\der x)}
    \le
    b_1+b_2\int_{\mathbb R^d}|x|^2\,\rho(\der x).
    \label{tso_eqn_m_growth_quadratic}
\end{equation}

\begin{proof}
Let $\omega^0$ be a fixed realization of the common randomness, i.e. of
$(Y_0,N^Y)$, outside a null set. On $[0,T]$, the corresponding Poisson path has only finitely many jumps. We construct the solution conditionally on this common path. For notational simplicity, the dependence on $\omega^0$ will be suppressed in several estimates.

Let $u\in C([0,T];\mathbb R^d)$ be fixed.
 We first consider the following system:
\begin{align}
\der Y_t^u &=
-\beta(Y_t^u-u_t)\der t
+ G_\varepsilon(u_{t-},Y_{t-}^u)\der N_t^Y,
\label{tso_eqn_frozen_Y}
\\
\der X_t^u
&=
-\eta_1(X_t^u-u_t)\der t
-\eta_2(X_t^u-Y_t^u)\der t
+\sigma\der W_t
+\int_{\mathbb{R}^d}
\Gamma(u_t,X_{t-}^u,Y_t^u,z)N(\der t,\der z),
\label{tso_eqn_frozen_X}
\end{align}
where $\Gamma(u,x,y,z)
    :=  (1-\cos\phi)
    \left(
    -x+\frac{\kappa_1u+\kappa_2y}{\kappa_1+\kappa_2}
    \right)
    +    \sigma_J\sin\phi\,z$.
For fixed $u$ and fixed common Poisson path in treasure hunter's dynamics, the coefficients in \eqref{tso_eqn_frozen_X} and drift coefficient in \eqref{tso_eqn_frozen_Y} 
are Lipschitz in $(x,y)$, and jump coefficient in \eqref{tso_eqn_frozen_Y} is locally Lipschitz. All coefficients have linear growth. Since the jump intensities are finite and the Gaussian jump-size distribution has finite moments of all orders, the well-posedness theorem \cite[Theorem~1]{gyongy1980stochastic} for jump-diffusion SDEs gives a unique strong c\`{a}dl\`{a}g solution
$(X^u,Y^u)$. Moreover, we have
\begin{align}
    \E^1|X^u_t|^{4} < \infty. 
\end{align}
Let
\begin{align}
    \nu_t^u:=\mathcal{L}^1(X_t^u)
\end{align}
be the law of the frozen explorer $X^u_t$. In other words, after fixing the common path $\omega^0$, $\nu_t^u$  represents the law of $X_t^u$ with respect to the idiosyncratic noises $W$ (Wiener noise) and $N$ (Poisson noise and Gaussian jumps)  defined on $(\Omega^1, \mathbb{F}^1, \mathbb{P}^1)$. Define
\begin{equation*}
    (\mathcal{T} u)_t
    :=
    \m^\alpha(\nu_t^u),
    \qquad 0\le t\le T.
\end{equation*}
We first show that $\mathcal T:C([0,T];\mathbb R^d)\to C([0,T];\mathbb R^d)$ is continuous and compact.

Let $\|u\|_\infty: = \sup_{t \in [0,T]} |u_t| \leq R$ for $u \in C([0,T];\R^d)$. For each $\omega^0$, we treat $C_R(\omega^0)$ as generic constant changing from line to line. By the linear growth of $G_\varepsilon$ and finite number of jumps in path of $Y^u$, Gronwall's inequality gives
\begin{equation}
    \sup_{t\in[0,T]}|Y_t^u|
    \le C_R(\omega^0).
    \label{tso_eq:Y_bound_frozen}
\end{equation}
Using Ito's formula for jump processes to $|X_t^u|^4$, using the linear growth of $\Gamma$, the bound \eqref{tso_eq:Y_bound_frozen}, and the finiteness of the moments of the Gaussian mark distribution, we get
\begin{equation}
    \sup_{t\in[0,T]}
    \E^1|X_t^u|^4
    \le
    C_R(\omega^0).
    \label{eq:X_fourth_bound_frozen}
\end{equation}
Therefore, by \eqref{tso_eqn_m_growth_quadratic}, we have
\begin{equation*}
    \| \mathcal T u\|_\infty
    \le C_R(\omega^0).
\end{equation*}
Using Ito's formula and the moment bounds obtained above for $X^u$, for $0\le s<t\le T$, we get
\begin{equation}
    \mathbb E^1|X_t^u-X_s^u|^2
    \leq  C_R(\omega^0)|t-s|.
    \label{eq:X_increment_frozen}
\end{equation}
Indeed, the finite-variation drift contributes $|t-s|^2$, the Brownian increment contributes order $|t-s|$, and the finite-activity jump integral contributes order $|t-s|$. The above computation is not difficult and has been done in \cite{carrillo2018analytical} and \cite{kalise2023consensus} for diffusion only and jump-diffusion SDEs, respectively. Therefore
\begin{align*}
    \mathcal{W}_2(\nu_t^u,\nu_s^u)
    \le
    \bigl(\mathbb E^1|X_t^u-X_s^u|^2\bigr)^{1/2}
    \le
    C_R(\omega^0)|t-s|^{1/2}.
\end{align*}
Using \eqref{eq:local_lip_m} with the fourth-moment bound \eqref{eq:X_fourth_bound_frozen}, we obtain
\begin{align*}
    |(\mathcal T u)_t-(\mathcal T u)_s|
    \le
    C_R(\omega^0)|t-s|^{1/2}.
\end{align*}
Thus, $\mathcal T$ maps bounded subsets of $C([0,T];\mathbb R^d)$ into bounded subsets of
$C^{0,1/2}([0,T];\mathbb R^d)$. By Arzel\`{a}-Ascoli theorem, $\mathcal T$ is compact.

We now prove continuity.  Let $u^n\to u$ in
$C([0,T];\mathbb R^d)$. In what follows, we will denote by $C(\omega^0)$ a generic constant for a given realization $\omega^0$. Denote fixed Poisson path's jump times on $[0,T]$ by $0<\tau_1<\cdots<\tau_J\le T$. Since $u^n\to u$ uniformly, the
sequence $(u^n)_n$ is uniformly bounded. The linear-growth of $G_\varepsilon$, together with Gronwall's inequality and the finiteness of the number of jumps, gives
\begin{equation} \label{tso_eqn_Y2.31}
    \sup_n\sup_{t\in[0,T]}|Y_t^{u^n}|
    +     \sup_{t\in[0,T]}|Y_t^u|
    \leq C(\omega^0).
\end{equation}
 Writing $\Delta_t^n:=Y_t^{u^n}-Y_t^u$ then between two consecutive jump times, we have
\begin{align*}
    |\Delta_t^n|
    \leq |\Delta_{\tau_k}^n| + \beta\int_{\tau_k}^t|\Delta_s^n|\der s + C\|u^n-u\|_\infty, \quad \tau_k \leq t < \tau_{k+1}.
\end{align*}
At a jump time $\tau_k$, $|\Delta_{\tau_k}^n|
    \leq C\bigl(
        |\Delta_{\tau_k-}^n|
        + \|u^n-u\|_\infty \bigr)$ where we have used Lipschitzness of $G_\varepsilon$ and \eqref{tso_eqn_Y2.31}. Iterating over the finitely many jump intervals gives
\begin{equation*}
    \sup_{t\in[0,T]}|Y_t^{u^n}-Y_t^u|
    \leq     C(\omega^0)\|u^n-u\|_\infty,
\end{equation*}
and hence
\begin{equation*}
    \sup_{t\in[0,T]}|Y_t^{u^n}-Y_t^u|\to0.
\end{equation*}
In the similar manner, using Lipschitzness of the coefficients of  $X^{u^n}$ and $X^u$, we apply standard procedure via Ito's formula for jump-diffusions to get
\begin{equation*}
    \sup_{t\in[0,T]}
    \mathbb E^1|X_t^{u^n}-X_t^u|^2
    \to0.
\end{equation*}
Consequently,
\begin{equation} \label{tso_eqn_2.35}
    \sup_{t\in[0,T]}
    \mathcal W_2(\nu_t^{u^n},\nu_t^u)\rightarrow 0.
\end{equation}
 Therefore \eqref{tso_eqn_2.35} and \eqref{eq:local_lip_m} imply
\begin{equation*}
    \|\mathcal{T} u^n-\mathcal T u\|_\infty\to0.
\end{equation*}
Thus, $\mathcal T$ is continuous.
Let $u\in C([0,T];\mathbb R^d)$ and $\theta\in[0,1]$ satisfy $ u=\theta\mathcal{T} u$. Then, by \eqref{tso_eqn_m_growth_quadratic},
\begin{equation}
    |u_t|^2 = \theta^2 | \mathcal{T}(u)_t|^2 = \theta^2 |\m^\alpha(\nu_t^u)|^2  
    \le
    C(1+\mathbb E^1|X_t^u|^2).
    \label{eq:LS_u_bound}
\end{equation}
Let $0<\tau_1<\cdots<\tau_J\leq T $
be the jump times of the fixed common Poisson path $N^Y(\omega^0)$ on
$[0,T]$, and set $\tau_0:=0$, $\tau_{J+1}:=T$. Define $S_t:=\E^1|X_t^u|^2+|Y_t^u|^2$.  Fix $j\in\{0,\ldots,J\}$ and let $t\in[\tau_j,\tau_{j+1})$. On this interval there is no jump of $N^Y$, and hence, by Young's inequality, we have
\begin{align}
    |Y_t^u|^2
    &\le 2|Y_{\tau_j}^u|^2 + 2\beta^2
    \Big| \int_{\tau_j}^t (Y_s^u-u_s)\der s
    \Big|^2
    \nonumber\\
    &\leq 2|Y_{\tau_j}^u|^2
    +  2\beta^2(t-\tau_j)
    \int_{\tau_j}^t |Y_s^u-u_s|^2\der s
\le C\Big(|Y_{\tau_j}^u|^2 + \int_{\tau_j}^t
        \bigl(|Y_s^u|^2+|u_s|^2\bigr)\der s\Big).
    \label{eq:Y_between_jumps_estimate}
\end{align}
Using Ito's isometry for the
Brownian integral term and the second-moment estimate for finite-activity
Poisson integrals, we obtain
\begin{align}
    \E^1|X_t^u|^2
    &\le C\E^1|X_{\tau_j}^u|^2
    + C\mathbb E^1
    \Big|\int_{\tau_j}^t
        \big(\eta_1(X_s^u-u_s) + \eta_2(X_s^u-Y_s^u)\big)\der s \Big|^2 \nonumber\\
    &\quad   + C\sigma^2(t-\tau_j) + C\mathbb E^1
    \Big|\int_{\tau_j}^t\int_{\mathbb R^d}
        \Gamma(u_s,X_{s-}^u,Y_{s-}^u,z)\,N(\der s,\der z)\Big|^2
    \nonumber\\ 
    &\le C\Big(1+\E^1|X_{\tau_j}^u|^2
        + \int_{\tau_j}^t
        \big(\E^1|X_s^u|^2 + |Y_s^u|^2 +  |u_s|^2
        \big)\der s
    \Big).
    \label{eq:X_between_jumps_estimate}
\end{align}
In the last inequality, we have used the linear growth bound $|\Gamma(u_s,x,y,z)|^2 \leq C(1+|x|^2+|y|^2+|u_s|^2+|z|^2) $ 
and the facts that $\int_{\mathbb R^d}|z|^2\,\nu_J(\der z)<\infty$ and $\nu_J(\mathbb R^d)<\infty$. Combining \eqref{eq:Y_between_jumps_estimate} and
\eqref{eq:X_between_jumps_estimate}, and then using
\eqref{eq:LS_u_bound}, gives
\begin{align}
    S_t &\leq  C\Big(
        1+S_{\tau_j} + \int_{\tau_j}^t (S_s+|u_s|^2)\der s
    \Big) \leq
    C\Big(
        1+S_{\tau_j}
        +
        \int_{\tau_j}^t S_s\der s
    \Big).
    \label{eq:S_between_jumps}
\end{align}
By Gronwall's inequality on the interval $[\tau_j,\tau_{j+1})$, we obtain
\begin{equation}
    \sup_{t\in[\tau_j,\tau_{j+1})}S_t
    \le
    C\bigl(1+S_{\tau_j}\bigr).
    \label{eq:S_interval_bound}
\end{equation}
We now include the effect of a hunter's common jump to get the final bound. At a jump time $\tau_j$ of $N^Y$, we have
\begin{align*}
    Y_{\tau_j}^u = Y_{\tau_j-}^u
    + G_\varepsilon(u_{\tau_j},Y_{\tau_j-}^u).
\end{align*}
Since $G_\varepsilon$ has linear growth, i.e. $|G_\varepsilon(m,y)|\le C(1+|m|+|y|)$, 
we get
\begin{align}
    |Y_{\tau_j}^u|^2 \le
    2|Y_{\tau_j-}^u|^2
    +
    2|G_\varepsilon(u_{\tau_j},Y_{\tau_j-}^u)|^2
    \leq
    C\bigl(
        1+|Y_{\tau_j-}^u|^2+|u_{\tau_j}|^2
    \bigr).
\end{align}
Using \eqref{eq:LS_u_bound}, the above inequality implies
\begin{align}
    S_{\tau_j}
    &=
    \mathbb E^1|X_{\tau_j}^u|^2+|Y_{\tau_j}^u|^2
    \leq
    C\big(
        1+\mathbb E^1|X_{\tau_j}^u|^2+|Y_{\tau_j-}^u|^2
    \big) \leq
    C\bigl(1+S_{\tau_j-}\bigr).
    \label{eq:S_jump_bound}
\end{align}

Starting from $S_0=\mathbb E^1|X_0|^2+|Y_0|^2<\infty$, 
we first apply \eqref{eq:S_interval_bound} on $[0,\tau_1)$, then
\eqref{eq:S_jump_bound} at $\tau_1$, then again
\eqref{eq:S_interval_bound} on $[\tau_1,\tau_2)$ and proceed iteratively. Since the number
$J:=J_T^Y(\omega^0)$ of common jumps on $[0,T]$ is finite, this induction gives
\begin{equation*}
    \sup_{t\in[0,T]}S_t
    \le
    C(\omega^0)\bigl(1+S_0\bigr)
    <\infty .
\end{equation*}
The constant $C(\omega^0)$ may depend on $T$, the model parameters, and the number
and locations of the common jumps, but it is independent of $u$. Finally, by \eqref{eq:LS_u_bound}, we have
\begin{align*}
    \|u\|_\infty^2
    =
    \sup_{t\in[0,T]}|u_t|^2
    \leq
    C\big(1+\sup_{t\in[0,T]}S_t\big)
    \leq
    C(\omega^0).
\end{align*}
This proves the boundedness of the Leray-Schauder set
\begin{equation}
    \left\{
        u\in C([0,T];\mathbb R^d):
        u=\theta\mathcal T u
        \text{ for some }\theta\in[0,1]
    \right\}.
\end{equation}
Since $\mathcal T$ is continuous and compact, the Leray-Schauder fixed-point theorem yields a fixed point
$m\in C([0,T];\mathbb R^d)$ such that
\begin{equation}
    m_t=(\mathcal Tm)_t
    =
    \mathfrak m^\alpha(\mathcal L^1(X_t^m)).
\end{equation}
Setting $Y:=Y^m$ and $X:=X^m$, we have shown the existence of at least one solution for every fixed common-noise realization, satisfying
\begin{align}
    m_t =     \m^\alpha(\mathcal L^1(X_t)).
\end{align}
 Let $(X,Y)$ and $(\bar{X},\bar Y)$ be two solutions driven by the same idiosyncratic noises and the same common hunter's Poisson path, with the same initial conditions. Let $\mu_t:=\mathcal L^1(X_t)$, $\bar{\mu}_t:=\mathcal L^1(\bar X_t)$, $m_t:=\m^\alpha(\mu_t)$ and $\bar m_t:=\m^\alpha(\bar\mu_t)$.
Using \eqref{eq:X_fourth_bound_frozen}, for the fixed common path there exists $K(\omega^0)<\infty$ such that
\begin{align}
    \sup_{t\in[0,T]}
    \left(\int_{\mathbb R^d}|x|^4\mu_t(dx) + \int_{\mathbb R^d}|x|^4\bar\mu_t(dx)
    \right)
    \le K(\omega^0).
\end{align}
The local stability estimate \eqref{eq:local_lip_m} therefore gives
\begin{align*}
    |m_t-\bar m_t|^2
    \leq
    C(\omega^0)
    \mathcal W_2^2(\mu_t,\bar\mu_t).
\end{align*}
Since the conditional law of $(X_t,\bar X_t)$ over $\Omega^1$ is a coupling of $\mu_t$ and $\bar\mu_t$,
\begin{equation}
    |m_t-\bar m_t|^2
    \le
   C(\omega^0)
    \mathbb E^1|X_t-\bar X_t|^2.
    \label{eq:m_difference_conditional}
\end{equation}
Let $\Delta X_t:=X_t-\bar X_t$, $\Delta Y_t:=Y_t-\bar Y_t$, and $\Delta m_t:=m_t-\bar m_t$.
Applying Ito's formula to $|\Delta X_t|^2$ and using Ito's isometry, we get
\begin{align}
    \mathbb E^1|\Delta X_t|^2
    \leq
    C(\omega^0)
    \int_0^t
    \left(
        \mathbb E^1|\Delta X_s|^2
        +
        |\Delta Y_s|^2
        +
        |\Delta m_s|^2
    \right)\der s.
\end{align}
Between common jump times,
\begin{align*}
    \frac{\der }{\der t}|\Delta Y_t|^2
    \leq
    C(\omega^0)
    \bigl(
        |\Delta Y_t|^2+|\Delta m_t|^2
    \bigr).
\end{align*}
At a common jump time $\tau$, $\Delta Y_\tau
    =\Delta Y_{\tau-} + G_\varepsilon(m_{\tau},Y_{\tau-})
    - G_\varepsilon(\bar m_{\tau},\bar Y_{\tau-})$,
and the local Lipschitz property of $G_\varepsilon$ gives
\begin{align*}
    |\Delta Y_\tau|^2
    \le
    C(\omega^0)
    \bigl(
        |\Delta Y_{\tau-}|^2+|\Delta m_{\tau}|^2
    \bigr).
\end{align*}
Combining these estimates with \eqref{eq:m_difference_conditional}, and again using the fact that the common Poisson path has finitely many jumps, yields
\begin{align}
    \mathbb E^1|\Delta X_t|^2+|\Delta Y_t|^2
    \leq
    C(\omega^0)
    \int_0^t
    \left(\mathbb E^1|\Delta X_s|^2+|\Delta Y_s|^2
    \right)\der s.
\end{align}
Applying Gronwall's inequality gives
\begin{align}
    \mathbb E^1|\Delta X_t|^2+|\Delta Y_t|^2=0,
    \qquad
    t\in[0,T].
\end{align}
Hence $X=\bar X$ and $Y=\bar Y$, for the fixed common path $\omega^0$. Consequently, the fixed point $m$ is unique for almost every common path in $\Omega^0$.

It remains to demonstrate that these pathwise fixed points generate an adapted stochastic process. 
Let
\begin{equation*}
    E^0:=\R^d\times D([0,T];\mathbb N_0)
\end{equation*}
be the space of common paths, and let
\begin{align*}
    \zeta(\omega^0):=(Y_0(\omega^0),N^Y(\omega^0)).
\end{align*}
For each $e\in E^0$ outside a $\mathbb P^0\circ\zeta^{-1}$-null set, let
\begin{align*}
    \mathcal T^e:C([0,T];\R^d)\to C([0,T];\R^d)
\end{align*}
be the pathwise fixed-point map constructed above, and define
\begin{align*}
    \mathcal A(e):=
    \{u\in C([0,T];\mathbb R^d):u=\mathcal T^e u\}.
\end{align*}
The Leray-Schauder theorem based argument shows that $\mathcal A(e)\neq\emptyset$. Since
$\mathcal T^e$ is continuous in $u$, the set $\mathcal A(e)$ is closed in
$C([0,T];\mathbb R^d)$. Moreover, the map
\begin{align*}
    (e,u)\mapsto \mathcal T^e u
\end{align*}
is Borel measurable (see Lemma~\ref{tso_borel_measurability_lemma}). Hence
\begin{align*}
    \operatorname{Graph}(\mathcal A)= \{(e,u) \in E^0 \times C([0,T]; \R^d) :u=\mathcal T^e u\}
    =
    \left\{(e,u):\|u-\mathcal T^e u\|_\infty=0\right\}
\end{align*}
is a measurable subset of
$E^0\times C([0,T];\mathbb R^d)$. By the
Kuratowski-Ryll-Nardzewski measurable selection theorem (see \cite[Section~5.2]{srivastava1998course}), there exists a
measurable map
\begin{align*}
    S:E^0\to C([0,T];\mathbb R^d)
\end{align*}
such that $S(e)\in\mathcal A(e)$ for every admissible common path $e$. We define
\begin{align*}
    m(\omega^0):=S(\zeta(\omega^0)).
\end{align*}
Then $m$ is a measurable $C([0,T];\mathbb R^d)$-valued random variable. 

It remains to show that $m$ is adapted. Let $e,\tilde e\in E^0$ coincide on
$[0,t]$. The fixed-point problems
corresponding to $e$ and $\tilde e$ coincide on $[0,t]$. Since pathwise
uniqueness holds on every subinterval, we have
\begin{align}
    S(e)(s)=S(\tilde e)(s),
    \qquad 0\le s\le t.
\end{align}
In particular, $S(e)(t)$ depends only on the stopped common path
$e_{\cdot\wedge t}$. Therefore $m_t$ is a measurable function of
$(Y_0,N^Y_{\cdot\wedge t})$, and hence $m_t$ is $\mathcal F_t^0$-measurable.
Thus $m$ is $\mathbb F^0$-adapted. This completes the proof.

\end{proof}

\begin{remark}
The well-posedness result is stated and proved for the specific consensus-hunter drift \eqref{tso_eqn_2.2} used throughout this paper. The argument, however, is not tied to this particular choice. Indeed, the proof relies only on constructing a pathwise fixed point conditional on the hunter's Poisson path and then getting an $\mathbb F^0$-adapted solution via a measurable selection. These steps require only suitable regularity of the
interaction drift and do not rely on the explicit consensus-hunter form.
\end{remark}

\begin{lemma}
\label{tso_borel_measurability_lemma}
Let $ E^0:=\mathbb R^d\times D([0,T];\mathbb N_0)$ endowed with its product Borel $\sigma$-field, where
$D([0,T];\mathbb{N}_0)$ carries the Skorokhod topology. For $e=(y_0,n)\in E^0$, let $\mathcal{T}^e$ denote the pathwise operator defined by
\begin{align*}
    (\mathcal{T}^e u)_t :=  \m^\alpha\bigl(\mathcal{L}^1(X_t^{u,e})\bigr),\qquad 0\le t\le T,
\end{align*}
where $(X^{u,e},Y^{u,e})$ is the solution of \eqref{tso_eqn_frozen_Y}-\eqref{tso_eqn_frozen_X} obtained by
fixing the common path $e=(y_0,n)$ and 
$u\in C([0,T];\mathbb R^d)$. Then the map
\begin{equation*}
    (e,u)\longmapsto \mathcal T^e u
\end{equation*}
is Borel measurable from $E^0\times C([0,T];\mathbb R^d)$  into $C([0,T];\mathbb R^d)$. 
\end{lemma}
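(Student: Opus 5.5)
Since $C([0,T];\mathbb R^d)$ and $E^0$ are Polish, I would prove Borel measurability by splitting the map into pieces that are each either continuous or Borel. The guiding fact is a standard one: a map $\Phi:E^0\times C\to C$ that is continuous in $u$ for each fixed $e$ and Borel in $e$ for each fixed $u$ (a Carathéodory map) is jointly Borel, because $C$ is separable metric. Continuity in $u$ for fixed $e$ is exactly what the proof of the theorem already gives, namely $\|\mathcal T^e u^n-\mathcal T^e u\|_\infty\to0$ whenever $u^n\to u$. So the real work is measurability of $e\mapsto\mathcal T^e u$ for a fixed $u$. Moreover, the Borel $\sigma$-field of $C([0,T];\mathbb R^d)$ is generated by the evaluations $v\mapsto v_t$, $t\in[0,T]\cap\mathbb Q$. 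It therefore suffices to show that $e\mapsto(\mathcal T^e u)_t=\m^\alpha(\mathcal L^1(X^{u,e}_t))$ is Borel for each fixed $u$ and rational $t$.

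For fixed $u$, I would first make the hunter path explicit as a function of $e=(y_0,n)$. The jump times $\tau_k(n)=\inf\{s:n_s\ge k\}$ are Borel functionals on $D([0,T];\mathbb N_0)$ for the Skorokhod $\sigma$-field, which coincides with the $\sigma$-field generated by the coordinate maps. Between jumps, $Y^{u,e}$ solves a linear ODE whose solution is given by an explicit variation-of-constants formula. At each jump one applies $y\mapsto y+G_\varepsilon(u_{\tau_k},y)$, which is continuous. Composing finitely many such continuous maps, one for each value of the Borel count $n_T$, shows that $(e,s)\mapsto Y^{u,e}_s$ is Borel, and in fact continuous in $y_0$ for fixed $n$. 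Thus $e\mapsto Y^{u,e}\in D([0,T];\mathbb R^d)$ is Borel.

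Next I would realise $X^{u,e}$ on the fixed space $\Omega^1$ as the solution of a linear jump SDE with the deterministic input $g=(u,Y^{u,e})$. The idea is to show that the map $g\mapsto \mathcal L^1(X^{g}_t)\in\mathcal P_2$ is continuous for the $L^1(0,T)$ or uniform topology on bounded sets. Because $Y^{u,e}$ enters the drift only through $\int(\cdot)\der s$ and enters the jump term at the Poisson times of the independent $N$, an Itô/Gronwall estimate of the kind used for continuity in the theorem gives
\[
\mathbb E^1|X^{g}_t-X^{\bar g}_t|^2\le C\int_0^T|g_s-\bar g_s|^2\der s ,
\]
together with a term from the jump coefficient. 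That jump term is also controlled by $\int|g-\bar g|^2$, since $N$ has a compensator absolutely continuous in time. Hence $e\mapsto \mathcal L^1(X^{u,e}_t)$ is a composition of a Borel map into $L^2(0,T)$ with a continuous map. Finally, $\m^\alpha$ is continuous on $\mathcal W_2$-bounded sets with bounded fourth moments by \eqref{eq:local_lip_m}, and the fourth moments are locally bounded by \eqref{eq:X_fourth_bound_frozen}. This gives Borel measurability of $(\mathcal T^e u)_t$.

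The main obstacle is the step $e\mapsto Y^{u,e}$ together with the Skorokhod structure. Small Skorokhod perturbations move the jump times, so $Y^{u,e}$ is not continuous into $D$ with the uniform norm. I would avoid continuity there altogether and argue only Borel measurability, through the explicit jump-time functionals. The $L^2(0,T)$-continuity of the explorer law then absorbs the time shifts, since shifted jump times change $Y$ only on sets of small Lebesgue measure.
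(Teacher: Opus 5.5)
Your argument is correct, but it takes a different route from the paper. The paper does not separate the dependence on $e$ and $u$. It invokes strong existence and pathwise uniqueness for the frozen system to obtain a jointly measurable solution functional $(X^{u,e},Y^{u,e})=\Phi(e,u,X_0,W,N_T)$, so that $(e,u,\omega^1)\mapsto X^{u,e}_t(\omega^1)$ is measurable. Fubini then makes $(e,u)\mapsto\int\varphi(X^{u,e}_t)\,\der\mathbb P^1$ Borel for every $\varphi\in C_b(\R^d)$. These maps generate the Borel $\sigma$-field of $\mathcal P(\R^d)$, and composing with $\m^\alpha$ and using rational-time evaluations finishes the proof.

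That route is short and never uses continuity in $u$. However, it relies on a parametrized (Yamada--Watanabe-type) measurable-dependence result over the Skorokhod space, which is taken as standard rather than proved.

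Your Carath\'eodory decomposition avoids this. You reuse the continuity of $u\mapsto\mathcal T^e u$ already established in the theorem. You make the hunter path an explicit Borel function of $(y_0,\tau_1,\dots,\tau_{n_T})$. You handle the explorer law through the stability estimate $\E^1|X^g_t-X^{\bar g}_t|^2\le C\int_0^T|g_s-\bar g_s|^2\der s$ in the separable space $L^2(0,T)$. So you never need joint measurability in $(e,u,\omega^1)$, and you even get quantitative dependence on the hunter path. The price is that you use the explicit finite-activity, Lipschitz structure of the frozen equations.

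Three small points to tidy up.
\begin{enumerate}
\item $\tau_k(n)=\inf\{s:n_s\ge k\}$ enumerates the jumps only for genuine counting paths (nondecreasing, unit jumps, $n_0=0$). Either restrict to that full-measure Borel subset of $D([0,T];\mathbb N_0)$, which is consistent with the paper defining $\mathcal T^e$ only off a null set, or enumerate the successive times where $n_s\neq n_{s-}$.
\item The uniform topology you mention in passing would not serve here, so keep to $L^2(0,T)$ as you eventually do.
\item The last step is simpler than you make it. Under \eqref{tso_eqn_f_lowerbnd_2.15} and the continuity of $f$, the map $x\mapsto x\,e^{-\alpha f(x)}$ is bounded and continuous. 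Hence $\m^\alpha$ is continuous for weak convergence on all of $\mathcal P(\R^d)$, and no fourth-moment localization is needed.
\end{enumerate}
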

\begin{proof}
    Thanks to the strong existence and pathwise uniqueness of the solution $(X^{u,e},Y^{u,e})$ of SDEs \eqref{tso_eqn_frozen_Y}-\eqref{tso_eqn_frozen_X}, there exists a measurable map $\Phi$ such that
    \begin{align}
        (X^{u,e}, Y^{u,e}) = \Phi(e, u, X_0, W, N_T)
    \end{align}
     where $N_T$ is a random element of the space of finite counting measures on $[0,T] \times \R^d$. Therefore, for every fixed $t$
\begin{equation*}
    (e, u, \omega^1) \longmapsto X_{t}^{u,e}(\omega^1)
\end{equation*}
     is measurable. Let $\varphi \in C_b(\R^d)$. Then
\begin{equation}
    \int_{\R^d}\varphi(x)\,
    \mathcal{L}^1(X_t^{u,e})(\der x)
    =     \int_{\Omega^1}
    \varphi\bigl(X_t^{u,e}(\omega^1)\bigr)
    \,\mathbb{P}^1(\der \omega^1).
\end{equation}
Since $(e,u,\omega^1)
    \longmapsto
    \varphi(X_t^{u,e}(\omega^1))$
is bounded and measurable, Fubini's theorem implies that
\begin{equation*}
    (e,u)
    \longmapsto     \int_{\Omega^1}
    \varphi\bigl(X_t^{u,e}(\omega^1)\bigr)
    \,\mathbb P^1(\der \omega^1)
\end{equation*}
is Borel measurable. Since the Borel $\sigma$-field on $\mathcal P(\mathbb R^d)$, endowed with the topology of weak convergence, is
 generated by the maps $ \rho\longmapsto \int_{\mathbb R^d}\varphi(x)\,\rho(\der x)$, $\varphi\in C_b(\mathbb R^d)$ (see \cite[Chapter~6]{parthasarathy2005probability}), 
it follows that
\[
    (e,u)\longmapsto \mathcal{L}^1(X_t^{u,e})
\]
is Borel measurable as a $\mathcal P(\mathbb R^d)$-valued map. Consequently, for every $t\in[0,T]$,
\begin{equation*}
    (e,u)
    \longmapsto  (\mathcal T^e u)_t
    =  \m^\alpha\bigl(\mathcal{L}^1(X_t^{u,e})\bigr)
\end{equation*}
is Borel measurable. In particular, this holds for every rational
$q\in\mathbb{Q}\cap[0,T]$. Since the paths $t\mapsto \mathcal{T}^e u$ are
continuous and since the Borel $\sigma$-field on
$C([0,T];\mathbb R^d)$ is generated by rational-time evaluations, we conclude
that $(e,u)\longmapsto \mathcal T^e u$
is Borel measurable from $E^0\times C([0,T];\mathbb R^d)$ into $C([0,T];\mathbb R^d)$. 
This completes the proof.
     
\end{proof}

\subsection{Convolution representation}\label{subsec2.2_conv_rep}

Let us assume that $X_0$ is independent of all driving noises and of the hunter filtration, and that
\begin{align}
\mathcal L(X_0\mid \mathcal F_0^0) &= \mathcal N(x_0,\Sigma_0), \quad \text{with} \qquad \Sigma_0\geq\sigma_0^2I_d
\end{align}
for some $\sigma_0>0$. Let $(\tau_i)_{i\geq1}$ be the jump epochs of the explorer's Poisson random measure $N(\der t,\der z)$, and let $(Z_i)_{i\geq1}$ be the associated jump marks. Since the jump-size distribution is standard Gaussian, the random variables $Z_n$ are i.i.d. with law $\mathcal N(0,I_d)$. They are independent of $X_0$, $W$, the hunter noise, and the unmarked explorer counting Poisson process. For $0\leq s\leq t$, let $J^X(s,t)$ denote the number of explorer jumps in the interval $(s,t]$. For brevity, define
\begin{align}
\Phi(s,t) &:= e^{-a(t-s)}c^{J^X(s,t)},
\end{align}
with $a := \eta_1 + \eta_2$ and $c := \cos(\phi)$. Applying the variation-of-constants formula between consecutive jump times gives, for every $t\geq0$,
\begin{align}
\begin{aligned}
X_t &= \Phi(0,t)X_0 + \int_0^t \Phi(s,t) [ \eta_1 \m^\alpha(\mu_s)+\eta_2Y_s]\der s  + \sum_{\tau_i\leq t} \Phi(\tau_i,t)(1-c) \mathrm{M}^\alpha(\mu_{\tau_i},Y_{\tau_i}) \\
&\quad + \sigma\int_0^t\Phi(s,t)\der W_s + \sum_{\tau_i\leq t}\Phi(\tau_i,t)\sigma_J \sin(\phi) Z_i .
\end{aligned}
\end{align}
Conditionally on the unmarked explorer jump process up to time $t$, the kernel $s \mapsto \Phi(s,t)$ is deterministic, and the stochastic integral above is understood as the corresponding Wiener integral. Now define the enlarged filtration
\begin{align}
\mathcal G_t &:= \mathcal{F}^0_t \vee \sigma\bigl(J^X(0,s):0\leq s\leq t\bigr),
\end{align}
where $J^X(0,s):=N((0,s]\times\mathbb R^d)$ is the unmarked explorer counting process. Thus, $\mathcal{G}_t$ contains the hunter's information and the explorer's jump times up to time $t$, but it does not contain the Brownian path or the Gaussian jump marks. Conditioned on $\mathcal G_t$, $\m^\alpha(\mu_s)$, $Y_s$, $\Phi(s,t)$, and the jump times are fixed, and therefore, $X_t$ is an affine function of the independent Gaussian random variables $X_0$, the Wiener integral term and $(Z_i)_{\tau_i\leq t}$. Therefore
\begin{align}
\mathcal L(X_t\mid\mathcal G_t) &= \mathcal N(M_t,V_t)
\end{align}
for some $\mathcal G_t$-measurable mean $M_t$. Its conditional covariance matrix is
\begin{align}
V_t &= \Phi(0,t)^2\Sigma_0 + \sigma^2\int_0^t\Phi(s,t)^2\der s\,I_d + \sigma_J^2\sin^2(\phi)\sum_{\tau_i\leq t}\Phi(\tau_i,t)^2I_d .
\end{align}
Note that $V_t=\operatorname{Cov}(X_t\mid\mathcal G_t)$ is different from the covariance $\Sigma_t=\operatorname{Cov}(X_t\mid\mathcal{F}^0_t)$. 
First consider the case with no jump, i.e. $J^X(0,t)=0$. Then $\Phi(s,t) = e^{-a(t-s)}$ 
for every $0\leq s\leq t$, and
\begin{align}\label{tso_eqn_2.32}
V_t &= e^{-2at}\Sigma_0 + \frac{\sigma^2}{2a} \left(1-e^{-2at}\right)I_d \geq \Big( e^{-2at}\sigma_0^2 + \frac{\sigma^2}{2a} \left(1-e^{-2at}\right) \Big)I_d, 
\end{align}
where we have used $\Sigma_0\geq\sigma_0^2I_d$. 
Next, we consider the case that $J^X(0,t)\geq1$, and let $\tau_k\leq t$ be the most recent explorer jump time before or at $t$. Since there are no explorer jumps in $(\tau_k,t]$,
\begin{align}
\Phi(\tau_k,t) &= e^{-a(t-\tau_k)}
\end{align}
and, for $s\in[\tau_k,t]$, $\Phi(s,t) = e^{-a(t-s)}$. Therefore, we obtain
\begin{align}\label{tso_eqn_2.62}
V_t & \geq \sin^2\phi\, \sigma_J^2\Phi(\tau_k,t)^2I_d + \sigma^2\int_{\tau_k}^t\Phi(s,t)^2\der s\,I_d  \nonumber \\  &   = \Big( \sigma_J^2\sin^2\phi\,e^{-2a(t-\tau_k)} + \frac{\sigma^2}{2a} \big(1-e^{-2a(t-\tau_k)}\big) \Big) I_d.
\end{align}
From \eqref{tso_eqn_2.32} and \eqref{tso_eqn_2.62}, we get
\begin{align*}
V_t &\geq \min\left\{ \sigma_0^2, \sigma_J^2\sin^2\phi, \frac{\sigma^2}{2a} \right\}I_d \geq \hat{\sigma}^2I_d .
\end{align*}
Thus, for every realization of the explorer jump times, $V_t-\hat{\sigma}^2I_d$ 
is positive semidefinite. Hence, using Gaussian convolution identity, we have
\begin{align*}
\mathcal L(X_t\mid\mathcal G_t) &= \mathcal{N}(M_t, V_t) = \mathcal N(0,\hat{\sigma}^2I_d) * \mathcal N(M_t,V_t-\hat{\sigma}^2I_d).
\end{align*}
Define the $\mathcal{F}^0_t$-measurable probability measure $\nu_t$ by
\begin{align}
\nu_t(A) &:= \mathbb E\left[ \mathcal N(M_t,V_t-\hat{\sigma}^2I_d)(A) \;\middle|\; \mathcal{F}^0_t \right], \qquad A\in\mathcal B(\mathbb R^d).
\end{align}
Then, for every Borel set $A\subset\mathbb R^d$, the tower property and Fubini's theorem give
\begin{align}
\begin{aligned}
\mu_t(A) &= \mathbb P(X_t\in A\mid\mathcal{F}^0_t) = \mathbb E\left[ \mathbb P(X_t\in A\mid\mathcal G_t) \;\middle|\; \mathcal{F}^0_t \right] \\
&= \mathbb E\left[ \left( \mathcal N(0,\hat{\sigma}^2I_d) * \mathcal N(M_t,V_t-\hat{\sigma}^2I_d) \right)(A) \;\middle|\; \mathcal{F}^0_t \right] = \left( \mathcal N(0,\hat{\sigma}^2I_d) * \nu_t \right)(A).
\end{aligned}
\end{align}
Therefore
\begin{align}
\mu_t &= \mathcal N(0,\hat{\sigma}^2I_d) * \nu_t .
\end{align}

\section{Steady state and Laplace approximation}\label{tso_steady_state_subsec} 
In Subsection~\ref{tso_steady_state_Sec}, we identify the steady state of mean-field SDEs and in  Subsection~\ref{tso_sec_quant_laplace_approx}, we prove the convergence of the steady state position of hunter to the global minimum with optimal rate. 

\subsection{Steady state} \label{tso_steady_state_Sec}
We begin by imposing a variance-matching condition, and subsequently analyze the system when this condition is relaxed. Let us assume 
\begin{align}\label{tso_match_condn}
 \frac{\sigma^2}{2(\eta_1 + \eta_2)} = \sigma_J^2 .
\end{align} 
 With this condition, we claim that a stationary state of the mean-field TSO dynamics is given by
\begin{align}
    \mu_\star
    =  \mathcal{N}(m_\star,\sigma_\star^2 I_d),\quad 
    Y_\star
    = m_\star,
\end{align}
where $m_\star =     \mathfrak{m}^\alpha(\mu_\star)$ and $\sigma_\star $ in steady state is $\sigma_J$ if \eqref{tso_match_condn} holds.  First, since $Y_\star=m_\star$
 and $\mathfrak{m}^\alpha(\mu_\star)=m_\star$, we have $\mathrm{M}^\alpha(\mu_\star,Y_\star)  =     m_\star$. 
 Denote the Gaussian density by
 \begin{align}
    \rho_\star(x)
    =
    \frac{1}{(2\pi\sigma_\star^2)^{d/2}}
    \exp\left(
    -\frac{|x-m_\star|^2}{2\sigma_\star^2}
    \right).
\end{align}
Since $\sigma_\star^2 \nabla_x\rho_\star(x) = -(x- m_\star)\rho_\star(x)$, we have $\nabla_x\cdot((x-m_\star)\rho_\star)
= -\sigma_\star^2\Delta_x\rho_\star$. It follows that
\begin{align}
(\eta_1 + \eta_2)\nabla_x\cdot(
(x- m_\star)\rho_\star) +
\frac{\sigma^2}{2}\Delta_x\rho_\star
&= \left(
-(\eta_1 + \eta_2)\sigma_\star^2+\frac{\sigma^2}{2}
\right)
\Delta_x\rho_\star = 0, \label{tso_eqn_2.21}
\end{align}
because $\sigma^2=2(\eta_1 + \eta_2)\sigma_\star^2$.  Let $    X\sim \mu_\star=\mathcal N(m_\star,\sigma_\star^2I_d)$, and $Z\sim \mathcal{N}(0,I_d)$, 
with $X$ and $Z$ independent. Since $\mathrm{M}^\alpha(\mu_\star,Y_\star)=m_\star$, we have
\begin{align}
    \mathcal J_{\mu_\star,Y_\star}(X,Z) =
    cX+(1-c)m_\star +
    \sqrt{1-c^2}\,\sigma_\star Z,
\end{align}
where $c:= \cos(\phi)$. Thus $\mathbb E[\mathcal J_{\mu_\star,Y_\star}(X,Z)] =
    cm_\star+(1-c)m_\star  = m_\star$ and $   \operatorname{Cov}
    \bigl( \mathcal J_{\mu_\star,Y_\star}(X,Z)\bigr)=     c^2\sigma_\star^2I_d +     (1-c^2)\sigma_\star^2I_d =     \sigma_\star^2I_d$. 
Since $\mathcal J_{\mu_\star,Y_\star}(X,Z)$ is Gaussian, we obtain $\mathcal J_{\mu_\star,Y_\star}(X,Z)
    \sim
    \mathcal N(m_\star,\sigma_\star^2I_d) =  \mu_\star$. 
Consequently,
\begin{align}
    \lambda_J \left(\mathcal J_{\mu_\star,Y_\star\#}
    (\mu_\star\otimes\nu) -  \mu_\star
    \right)  =  0. \label{tso_eqn_2.23}
\end{align}
Combining \eqref{tso_eqn_2.21} and \eqref{tso_eqn_2.23}, we get
\begin{align}
    \mathcal{A}_{\mu_\star} \mu_\star = 0. 
\end{align}
For the hunter, at the proposed steady state, the coefficients vanish.  Conditionally, the corresponding joint law is
\begin{align}
    \mathcal L(X_\star,Y_\star\mid\mathcal F_\star^0)
    =
    \mu_\star\otimes\delta_{m_\star},
\end{align}
where $\mathcal{F}_\star^0 = \mathrm{\sigma}(\bigcup_{t \geq 0}\mathcal{F}_t^0)$.

When the condition \eqref{tso_match_condn} is not satisfied, the disparity between continuous diffusion and the discrete affine jumps prevents the conditional steady state from being Gaussian.
Nevertheless, we can still explicitly determine the conditional variance. The conditional covariance $\Sigma_t = \operatorname{Cov}(X_t \mid \mathcal{F}_t^0)$ satisfies a closed equation up to a rank one term as we will demonstrate below.
To derive the dynamics of the conditional covariance $\Sigma_t $, we define  $\mathrm{D}_t := X_t - \bar{X}_t$, where $\bar{X}_t = \mathbb{E}[X_t \mid \mathcal{F}_t^0]$ is the conditional mean. Therefore, we have
\begin{align*}
\der \mathrm{D}_t 
&= \der X_t - \der \bar{X}_t \\
&= \bigl( -\eta_1(X_t - \m^\alpha(\mu_t)) - \eta_2(X_t - Y_t) \bigr) \der t + \bigl(\eta_1(\bar{X}_t - \mathfrak{m}^\alpha(\mu_t)) + \eta_2(\bar{X}_t - Y_t) \bigr) \der t + \sigma \der W_t \\  
 & \quad+ \int_{\mathbb{R}^d}\bigl( (1-\cos(\phi))
\big(
-X_{t^-}+ \mathrm{M}^\alpha(\mu_t,Y_t)
\big) + \sigma_J\sin(\phi)z\bigr) N(\der t, \der z) \\
& \quad - \mathbb{E}\bigg( \int_{\mathbb{R}^d} 
\Bigl( (1-\cos\phi) \bigl( -X_{t^-} + \mathrm{M}^\alpha(\mu_t,Y_t) \bigr) + \sigma_J\sin(\phi)z \Bigr) N(\der t, \der z) \;\Big|\; \mathcal{F}_t^0 \bigg).
\end{align*}
Applying Ito's lemma to  $\mathrm{D}_t \mathrm{D}_t^\top$ and taking the expectation on both sides conditional on $\mathcal{F}_t^{0}$ gives
\begin{align}
 \frac{\der \Sigma_t}{\der t} 
&= \mathbb{E}\left[ \frac{\der}{\der t}(\mathrm{D}_t \mathrm{D}_t^\top) \mid \mathcal{F}_t^0 \right] 
=-2(\eta_1+\eta_2)\Sigma_t + \sigma^2 I_d + \lambda_J \big( (\cos^2(\phi) - 1)\Sigma_t + \sin^2(\phi)\sigma_J^2 I_d \big)\nonumber 
\\ & \quad + \lambda_J (1 - \cos(\phi))^2  (\mathrm{M}^\alpha(\mu_t, Y_t) -\bar{X}_t)(\mathrm{M}^\alpha(\mu_t, Y_t) -\bar{X}_t)^\top, 
\label{tso_eqn_covi}
\end{align}
where we have used the following: \begin{align}
&\mathbb{E}\left[ \int_{\mathbb{R}^d} 
\Bigl( (1-\cos\phi) \bigl( -X_{t^-} + \mathrm{M}^\alpha(\mu_t,Y_t) \bigr) + \sigma_J\sin(\phi)z \Bigr) N(\der t, \der z) \;\bigg|\; \mathcal{F}_t^0 \right] \nonumber \\
&\; = \mathbb{E}\left[ \int_{\mathbb{R}^d} 
\Bigl( (1-\cos\phi) \bigl( -X_{t^-} + \mathrm{M}^\alpha(\mu_t,Y_t) \bigr) + \sigma_J\sin(\phi)z \Bigr) \lambda_J \nu(\der z) \der t \;\bigg|\; \mathcal{F}_t^0 \right] \nonumber \\
&\; = \lambda_J \mathbb{E}\big( (1-\cos\phi) \bigl( -X_{t^-} + \mathrm{M}^\alpha(\mu_t,Y_t) \bigr) \;\big|\; \mathcal{F}_t^0 \big) \der t = \lambda_J (1-\cos\phi) \bigl( -\bar{X}_{t} + \mathrm{M}^\alpha(\mu_t,Y_t) \bigr) \der t.
\end{align}
At steady state $Y = \m^\alpha(\mu)$, therefore setting $\frac{\der \Sigma_t}{\der t} = 0$ in \eqref{tso_eqn_covi} gives
\begin{equation}
 -\bigl(2(\eta_1+\eta_2) + \lambda_J(1-\cos^2\phi)\bigr) \Sigma_\star + \bigl(\sigma^2 + \lambda_J(1-\cos^2\phi)\sigma_J^2\bigr)I_d = 0.
\label{eq:var_ODE}
\end{equation}
Therefore, we arrive at the steady-state conditional covariance which we also denote by $\sigma_\star^2 I_d$  where $\sigma_\star$ is given by
\begin{equation}
\sigma_\star^2 = \frac{\sigma^2 + \lambda_J \sin^2\phi\,\sigma_J^2}{2(\eta_1+\eta_2) + \lambda_J \sin^2\phi}.
\label{eq:steady_var}
\end{equation}

\subsection{Quantitative Laplace approximation of steady state}\label{tso_sec_quant_laplace_approx}
 In \cite{carrillo2018analytical}, Laplace principle is applied directly to the initial distribution, assuming it has sufficient mass near the global minimizer. In \cite{fornasier_klock_riedl2024consensus}, the authors established a  quantitative Laplace principle valid for time-varying measures provided they have sufficient mass in the region containing $x_{\min}$. This bound has been used in analyzing additive noise CBO in \cite{additive_CBO_2026}, where it is again applied to the time-varying measure $\rho_t$. However, because the quantitative bound is obtained for arbitrary measures, the measure-splitting techniques used in \cite{fornasier_klock_riedl2024consensus} yield sub-optimal error terms. 
 
 In this subsection, by imposing local smoothness ($C^4$ in the neighborhood of $x_{\min}$) and non-degeneracy at the global minimizer, we apply the classical Laplace method directly to the self-consistent stationary equation (see \eqref{tso_eqn_3.15}). This allows us to exploit the symmetry of the Gaussian stationary measure and get explicit rate of $\mathcal{O}(1/\alpha)$ for the distance between the steady-state of treasure hunter and the global minimum.

We impose the following assumption on $f$.

\begin{assumption}\label{tso_assum_2}
   Let $f \in C^{4}(B_r(x_{\min}))$ for some $r > 0$ with $x_{\min}$ being the unique global minimizer. We assume $\nabla^2 f(x_{\min}) > 0$. We assume that there exist constants $\kappa > 0$ and  $\ell \geq 0 $ such that 
   \begin{align}\label{tso_eqn_coerc_assum}
       f(x) \geq \kappa |x|^2 - \ell, \quad  x \in \mathbb{R}^{d}.
   \end{align}
\end{assumption}

\begin{theorem}[Quantitative Laplace principle for the steady state]
\label{thm:steady_state_laplace}
Let the variance-matching condition \eqref{tso_match_condn} hold, such that the conditional steady-state law of the explorers is Gaussian with variance $\sigma_\star^2 = \sigma_J^2$, and let $c_0 = 1/(2\sigma_\star^2)$. Let the objective function $f$ satisfy Assumption~\ref{tso_assum_2}. Define the mapping $T_\alpha: \mathbb{R}^d \to \mathbb{R}^d$ as
\begin{align}
    T_\alpha(m) = \frac{\int_{\R^d} x e^{-\alpha f(x)} e^{-c_0 |x - m|^2} \der x}{\int_{\mathbb{R}^d} e^{-\alpha f(x)} e^{-c_0 |x - m|^2} \der x}.
\end{align}
Then, for all sufficiently large $\alpha > 0$, there exists a self-consistent steady-state consensus point $m_\alpha \in \mathbb{R}^d$ satisfying 
\begin{align}\label{tso_eqn_3.15}
    m_\alpha = T_\alpha(m_\alpha).
\end{align}
Moreover, there exists a constant $C > 0$, independent of $\alpha$, such that the distance between $m_\alpha$ and the global minimum satisfies the bound
\begin{align}
    |m_\alpha - x_{\min}| \le \frac{C}{\alpha}.
\end{align}
\end{theorem}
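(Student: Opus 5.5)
I plan a fixed-point argument on a small ball around $x_{\min}$, combined with a classical Laplace expansion of $T_\alpha$ that is uniform in $m$. Without loss of generality set $x_{\min}=0$ and $f(0)=f_{\min}=0$, and write $H:=\nabla^2 f(0)>0$. Fix a radius $R_0$ to be chosen later and consider the closed ball $\bar B:=\bar B_{R_0/\alpha}(0)$. For $m\in\bar B$, the substitution $x=y/\sqrt{\alpha}$ gives
\begin{align*}
T_\alpha(m)=\frac{\int y\,e^{-\alpha f(y/\sqrt\alpha)}e^{-c_0|y/\sqrt\alpha-m|^2}\der y}{\sqrt{\alpha}\int e^{-\alpha f(y/\sqrt\alpha)}e^{-c_0|y/\sqrt\alpha-m|^2}\der y}.
\end{align*}
The effective weight $g_m(x):=\alpha f(x)+c_0|x-m|^2$ has a unique nondegenerate minimizer $x^\ast(m)$ near $0$. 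Solving $\alpha\nabla f(x^\ast)+2c_0(x^\ast-m)=0$ with the implicit function theorem gives $x^\ast(m)=\frac{2c_0}{\alpha}H^{-1}m+O(|m|^2+|m|/\alpha^2)$, which is of size $O(R_0/\alpha^2)$ on $\bar B$.

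\textbf{Step 1: localization.} Using the coercivity \eqref{tso_eqn_coerc_assum} together with the uniqueness of the minimizer and continuity, I will show that $\inf_{|x|\ge r'}f>0$ for each $r'\in(0,r)$ (coercivity forces any minimizing sequence to stay bounded, hence to converge to the unique minimizer). Contributions to both integrals from $|x|\ge r'$ are then $O(e^{-\alpha\delta})$, uniformly for $m$ bounded; the $x$-weighted numerator is integrable thanks to the quadratic growth of $f$. \textbf{Step 2: local Laplace expansion.} On $B_{r'}$, expand $f$ by Taylor's theorem to fourth order, $f(x)=\tfrac12 x^\top Hx+\tfrac16 D^3f(0)[x,x,x]+O(|x|^4)$, and use the Gaussian $\exp(-\tfrac{\alpha}{2}x^\top Hx-c_0|x-m|^2)$. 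After rescaling, the shift $m$ enters only through the factor $e^{2c_0 x\cdot m}$ together with $e^{-c_0|x|^2}$, and both are smooth perturbations of order $O(|m|/\sqrt{\alpha}\cdot\sqrt\alpha)$ that I expand. The leading Gaussian mean vanishes by symmetry. The cubic term gives the first correction $-\tfrac{\alpha}{6}\E_G\bigl[x\,D^3f(0)[x,x,x]\bigr]$; with $x\sim\mathcal N(0,(\alpha H)^{-1})$ this is $O(\alpha\cdot\alpha^{-2})=O(1/\alpha)$. The $m$-dependence contributes $\frac{2c_0}{\alpha}H^{-1}m+O(|m|/\alpha^{2})$. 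The quartic remainder contributes $O(\alpha\cdot\alpha^{-5/2})=O(\alpha^{-3/2})$, and the normalization error is of relative size $O(1/\alpha)$, so it multiplies a quantity that is already small. Altogether, uniformly for $m\in\bar B$,
\begin{align*}
|T_\alpha(m)|\le \frac{C_1}{\alpha}+\frac{C_2}{\alpha}|m|,\qquad |T_\alpha(m)-T_\alpha(m')|\le \frac{C_2}{\alpha}|m-m'|,
\end{align*}
where the Lipschitz bound comes from differentiating under the integral: $\nabla_m T_\alpha=2c_0\operatorname{Cov}_{\pi_m}(x)$, where $\pi_m$ is the tilted measure, and the same Laplace analysis gives $\operatorname{Cov}_{\pi_m}=O(1/\alpha)$.

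\textbf{Step 3: fixed point.} Choose $R_0=2C_1$. For $\alpha$ large, $T_\alpha$ maps $\bar B$ into itself, since $C_1/\alpha+C_2R_0/\alpha^2\le R_0/\alpha$. It is also a contraction with constant $C_2/\alpha<1$. The Banach fixed point theorem (or Brouwer) then yields $m_\alpha\in\bar B$ with $m_\alpha=T_\alpha(m_\alpha)$, and $|m_\alpha-x_{\min}|\le R_0/\alpha$, which is the claimed bound.

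The main obstacle is Step 2: obtaining the $O(1/\alpha)$ bias with constants uniform in $m$, and in particular controlling the covariance of the tilted measure in order to get the contraction. $f$ is only $C^4$ on $B_r$ and merely coercive outside it, so the Taylor remainders and the tail estimates have to be combined carefully. My first approach is to write both integrals as expectations under the fixed Gaussian $\mathcal N(0,(\alpha H+2c_0I)^{-1})$ and to bound the perturbation factor $e^{-\alpha R_3(x)+2c_0x\cdot m}-1$ on $|x|\le \alpha^{-1/2+\epsilon}$. On the intermediate shell, I will use $f(x)\ge \tfrac14 x^\top Hx$ to get superpolynomially small contributions.
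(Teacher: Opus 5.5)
Your proof is correct, but it organizes the fixed-point argument differently from the paper.

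\textbf{The paper's route.} It first proves an inward-pointing estimate $\bigl((m-x_{\min})\cdot(T_\alpha(m)-x_{\min})\bigr)\le(1-\theta)|m-x_{\min}|^2$ for $|m-x_{\min}|\ge R$, with $R$ independent of $\alpha$. This is where the coercivity \eqref{tso_eqn_coerc_assum} is genuinely used. It then gets a fixed point in $\bar B_R(x_{\min})$ by applying Brouwer to $P\circ T_\alpha$, where $P$ is the radial projection. Finally it proves $|T_\alpha(m)-x_{\min}|\le C/\alpha$ uniformly on that ball, via a Laplace bound on the numerator and denominator uniform over tilts $|b|\le 2c_0R$. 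That step needs only a first-moment estimate, with the cancellation coming from $\int z\,e^{-\frac12 z^\top Hz}\der z=0$.

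\textbf{Your route.} You work on the shrinking ball $\bar B_{R_0/\alpha}(x_{\min})$ and add a second ingredient to obtain a contraction: the exponential-family identity $\nabla_m T_\alpha=2c_0\operatorname{Cov}_{\pi_m}$, together with the second-moment Laplace bound $\operatorname{Cov}_{\pi_m}=O(1/\alpha)$.

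\textbf{What each buys.} Your argument gives uniqueness of the fixed point in an $O(1/\alpha)$-neighbourhood of $x_{\min}$, and geometric convergence of the Picard iteration $m_{k+1}=T_\alpha(m_k)$, which the paper does not have. It does not give the paper's stronger conclusion that \emph{every} solution of $m=T_\alpha(m)$ lies within $C/\alpha$ of $x_{\min}$. Nothing in your argument excludes fixed points far away; excluding them is exactly what the inward-pointing estimate does. The two approaches are complementary. Your covariance bound holds, by the same Laplace estimates, uniformly for $|m-x_{\min}|\le R$. Combined with the paper's confinement step, this would make the steady-state consensus point globally unique for large $\alpha$.

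\textbf{Two small points.} First, to keep the choice $R_0=2C_1$ from being circular, derive $C_1,C_2$ uniformly for, say, $|m-x_{\min}|\le 1$, and take $\alpha\ge R_0$. Second, the implicit-function computation of $x^\ast(m)$ and the explicit first-order bias expansion are unnecessary. The uniform bound $|T_\alpha(m)-x_{\min}|\le C/\alpha$ on a fixed ball already makes $T_\alpha$ a self-map of $\bar B_{C/\alpha}(x_{\min})$, so Brouwer alone gives existence; the contraction is needed only for uniqueness.
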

\begin{proof}
Without loss of generality, we may assume $f(x_{\min}) = 0$, as shifting $f$ by a constant cancels out in the definition of $T_\alpha(m)$.

Due to \eqref{tso_eqn_coerc_assum}, the mapping $T_\alpha$ is well-defined. Moreover, if we take a sequence $m_n $ converging to $m$ then $e^{-c_0 |x - m_n|^2} \to e^{-c_0 |x - m|^2}$. Then, again due to  \eqref{tso_eqn_coerc_assum}, dominated convergence theorem can be applied to obtain $T_\alpha (m_n) \to T_{\alpha}(m) $ as $n \to \infty$. Therefore, $T_\alpha :\mathbb{R}^{d} \to \mathbb{R}^d$ is a continuous mapping.

We now prove that for sufficiently large $R$ and sufficiently large $\alpha$, the following holds:
\begin{align*}
 \big( ( m - x_{\min}) \cdot (T_\alpha(m) - x_{\min})\big) \leq |m - x_{\min}|^2, 
\end{align*}
whenever $|m - x_{\min}| \geq R$. Let $y = x - x_{\min}$ and $\varepsilon = m - x_{\min}$, then using this notation, we have
\begin{align*}
    T_{\alpha}(m) - x_{\min} &= \frac{\int_{\mathbb{R}^d} y e^{-\alpha f(x_{\min} + y)} e^{-c_0 |y - \varepsilon|^2} \der y}{\int_{\mathbb{R}^d} e^{-\alpha f(x_{\min} + y)} e^{-c_0 |y - \varepsilon|^2} \der y}, \\ 
   \text{and}\quad  \big( \varepsilon \cdot (T_{\alpha}(m) - x_{\min})\big) &= \frac{\int_{\mathbb{R}^d} (\varepsilon \cdot y) e^{-\alpha f(x_{\min} + y)} e^{-c_0 |y - \varepsilon|^2} \der y}{\int_{\mathbb{R}^d} e^{-\alpha f(x_{\min} + y)} e^{-c_0 |y - \varepsilon|^2} \der y}.
\end{align*}
Consider the integral 
\begin{align*}
    \int_{\mathbb{R}^d} (\varepsilon \cdot y) e^{-\alpha f(x_{\min} + y)} e^{-c_0 |y - \varepsilon|^2} \der y & = \int_{|y| \leq |\varepsilon|/2 } (\varepsilon \cdot y) e^{-\alpha f(x_{\min} + y)} e^{-c_0 |y - \varepsilon|^2} \der y \nonumber 
    \\   &   \quad + \int_{|y| > |\varepsilon|/2 } (\varepsilon \cdot y) e^{-\alpha f(x_{\min} + y)} e^{-c_0 |y - \varepsilon|^2} \der y.
\end{align*}
Therefore, we get
\begin{align}\label{tso_eqn_2.7}
     \big( \varepsilon \cdot (T_{\alpha}(m) - x_{\min})\big) \leq  \frac{|\varepsilon|^2}{2 } + \frac{|\varepsilon|}{Z_\alpha(\varepsilon)} \int_{|y| > |\varepsilon|/2 }  |y| e^{-\alpha f(x_{\min} + y)} e^{-c_0 |y - \varepsilon|^2} \der y,
\end{align}
where $Z_\alpha(\varepsilon) := \int_{\mathbb{R}^d} e^{-\alpha f(x_{\min} + y)} e^{-c_0 |y - \varepsilon|^2} \der y $.

For subsequent analysis, we need a lower bound for the denominator $Z_\alpha(\varepsilon)$. Since $f \in C^4$ near $x_{\min}$, $\nabla f(x_{\min}) = 0$, and $\nabla^2 f(x_{\min}) > 0$, there are constants $\hat{r} > 0$ and $ \hat{C} > 0$ such that
\begin{equation}\label{tso_eqn_2.8}
f(x_{\min} + y) \leq \hat C |y|^2 
\end{equation}
whenever $|y| \leq \hat{r}$. For sufficiently large $\alpha$, $|y| \le \alpha^{-1/2}$ implies $|y| \le \hat r$, which in turn guarantees
\begin{equation}\label{tso_eqn_2.9}
\alpha f(x_{\min} + y) \leq \hat{C}. 
\end{equation}
On the ball $|y| \leq \alpha^{-1/2}$, we also have 
\begin{equation} \label{tso_eqn_2.10}
e^{-c_0|y-\varepsilon|^2} \geq \exp\left(-c_0|\varepsilon|^2 - 2c_0|\varepsilon|\alpha^{-1/2} - c_0\alpha^{-1}\right),
\end{equation}
since $|y - \varepsilon|^{2} \leq |\varepsilon|^{2} + 2|\varepsilon||y| + |y|^2 \leq |\varepsilon|^2 + 2|\varepsilon|\alpha^{-1/2} + \alpha^{-1}$. Therefore, using \eqref{tso_eqn_2.9} and \eqref{tso_eqn_2.10}, we obtain
\begin{align}\label{tso_eqn_2.11}
Z_\alpha(\varepsilon) \geq C\alpha^{-d/2} \exp\left(- c_0|\varepsilon|^2 - 2c_0|\varepsilon|\alpha^{-1/2} - c_0\alpha^{-1}\right),
\end{align}
where $C>0$ is independent of $\alpha$.  We now obtain a bound on the integral in the second term on right hand side of inequality \eqref{tso_eqn_2.7}. 
Since $|x|^2 = |x_{\min} + y|^2 \geq \frac{1}{2}|y|^2 - |x_{\min}|^2$, we have $f(x_{\min} + y) \geq \kappa |x_{\min} + y|^2 - \ell \ge \frac{\kappa}{2}|y|^2 - \hat{\ell}$ where $\hat{\ell} = \kappa |x_{\min}|^2 + \ell$. 
Therefore,
\begin{align}\label{tso_eqn_2.12}
 \int_{|y| > |\varepsilon| /2} |y| e^{-\alpha f(x_{\min} + y)} e^{-c_0|y - \varepsilon|^2} \der y \le C(1 + |\varepsilon|) \exp\left(\alpha \hat{\ell} - \frac{\alpha \kappa}{8}|\varepsilon|^2\right), 
\end{align} 
since $ \int_{\mathbb{R}^d} |y| e^{-c_0|y - \varepsilon|^2} \der y \le C(1 + |\varepsilon|)$ and $e^{-\alpha f(x_{\min} + y)} \leq e^{\alpha \hat{\ell}} e^{-\alpha \frac{\kappa}{2}| y|^2 }$.
Combining \eqref{tso_eqn_2.12} with \eqref{tso_eqn_2.11} gives
\begin{align*} 
\frac{|\varepsilon|}{Z_\alpha(\varepsilon)}  \int_{|y| > |\varepsilon| /2} |y| e^{-\alpha f(x_{\min} + y)} e^{-c_0|y - \varepsilon|^2} \der y \leq C|\varepsilon|(1 + |\varepsilon|) \alpha^{d/2} \exp\left(\alpha \hat \ell - \big(\frac{\alpha \kappa}{8} - c_0\big)|\varepsilon|^2 + C|\varepsilon|\alpha^{-1/2}\right).
\end{align*}
Choose $R > 0$ large enough that $\frac{\kappa}{8}R^2 > 2\hat{\ell}$. This gives 
\begin{align*} 
\alpha \hat{\ell} - \frac{\alpha \kappa}{8}R^2 \leq -\frac{\alpha \kappa}{16}R^2. 
\end{align*}
Therefore, for $|\varepsilon| \geq R$, $C |\varepsilon|(1+|\varepsilon|)\alpha^{d/2} \exp\left(-\frac{\alpha \kappa}{16}|\varepsilon|^2 + c_0|\varepsilon|^2 + 2 c_0 |\varepsilon|\alpha^{-1/2}\right) \to 0 $
as $\alpha \to \infty$. Hence, with sufficiently large $\alpha$, we can choose $\theta \in (0, 1/2)$ such that 
\begin{align*} 
\frac{|\varepsilon|}{Z_\alpha(\varepsilon)} \int_{|y| > |\varepsilon| /2} |y| e^{-\alpha f(x_{\min} + y)} e^{-c_0|y - \varepsilon|^2} \der y \leq (1-2\theta)\frac{|\varepsilon|^2}{2}
\end{align*}
for $|\varepsilon| \geq R$. Therefore,
\begin{align*}
\big(\varepsilon \cdot (T_\alpha(m) - x_{\min}) \big)\leq \frac{|\varepsilon|^2}{2} + (1 -2 \theta)\frac{|\varepsilon|^2}{2} = (1 -\theta)|\varepsilon|^2.
\end{align*}
Since $\varepsilon = m - x_{\min}$, this proves that for sufficiently large $R$ and sufficiently large $\alpha$,
\begin{equation} \label{tso_eqn_2.17}
\big( (m - x_{\min} ) \cdot (T_\alpha(m) - x_{\min})\big) \leq (1-\theta)|m - x_{\min}|^2,
\end{equation}
whenever $ |m - x_{\min}| \geq R$. Here $\theta \in (0, 1/2)$. 

We now use this estimate to obtain a fixed point. Let
$P:\mathbb R^d\to \bar{B}_R(x_{\min})$ be the radial projection onto
the closed ball centered at $x_{\min}$:
\begin{equation*}
    P(z)
    =
    \begin{cases}
    z, & |z-x_{\min}|\le R,\\
    x_{\min}     +     R\dfrac{z-x_{\min}}{|z-x_{\min}|},
    & |z-x_{\min}|>R.
    \end{cases}
\end{equation*}
Then $ P\circ T_\alpha:
    \bar{B}_R(x_{\min})
    \to     \bar{B}_R(x_{\min}) $ is continuous. By Brouwer's fixed-point theorem, there exists
$m_\alpha\in \bar{B}_R(x_{\min})$ such that
\begin{align*}
    m_\alpha=P(T_\alpha(m_\alpha)).
\end{align*}
We show  that $m_\alpha$ is indeed a fixed point of $T_\alpha$.
First consider the case when $|T_\alpha(m_\alpha)-x_{\min}|\le R$. Then, we simply have 
\begin{equation*}
    m_\alpha = P(T_\alpha(m_\alpha)) =     T_\alpha(m_\alpha).
\end{equation*}
It remains to deal with the case when $|T_\alpha(m_\alpha)-x_{\min}|>R$. 
In this case, by the definition of $P$, we have
\begin{align*}
     m_\alpha
    =  x_{\min} + R\frac{
        T_\alpha(m_\alpha)-x_{\min}}{|T_\alpha(m_\alpha)-x_{\min}|}.
\end{align*}
Hence $|m_\alpha-x_{\min}|=R$ and $m_\alpha-x_{\min}$ points in the same direction as
$T_\alpha(m_\alpha)-x_{\min}$. Equivalently, $T_\alpha(m_\alpha)-x_{\min} = \hat{a}(m_\alpha-x_{\min})$
 with $\hat{a} = \frac{|T_\alpha(m_\alpha)-x_{\min}|}{R} >1$. 
Therefore
\begin{align*}
    \big((m_\alpha-x_{\min})\cdot
    (T_\alpha(m_\alpha)-x_{\min}) \big)  &=
    \hat a |m_\alpha-x_{\min}|^2 = \hat{a} R^2 > R^2.
\end{align*}
This contradicts the estimate already proved i.e. \eqref{tso_eqn_2.17} whenever $|m-x_{\min}|=R$.
Therefore, $|T_\alpha(m_\alpha)-x_{\min}|>R$ is impossible.  Hence 
\begin{equation*}
    |T_\alpha(m_\alpha)-x_{\min}|\leq R,
\end{equation*} and
\begin{align}
    m_\alpha=T_\alpha(m_\alpha).
\end{align}
Therefore, there exists a self-consistent solution $m_\alpha$ and $m_\alpha \in \bar{B}_{R}(x_{\min})$ due to strict inward-pointing estimate \eqref{tso_eqn_2.17} outside the ball.

We now prove that every self-consistent solution satisfies the sharp estimate
\begin{align}
    |m_\alpha-x_{\min}|\le \frac{C}{\alpha}
\end{align}
for all sufficiently large $\alpha$. Let $m_\alpha$ be any solution of the self-consistency equation $ m_\alpha=T_\alpha(m_\alpha)$.  Let $\delta_\alpha:=m_\alpha-x_{\min}$ and therefore
\begin{equation*}
    \delta_\alpha = \frac{
    \int_{\mathbb R^d} y e^{-\alpha f(x_{\min}+y)}e^{-c_0|y-\delta_\alpha|^2}\der y}{ \int_{\mathbb R^d}
    e^{-\alpha f(x_{\min}+y)}
    e^{-c_0|y-\delta_\alpha|^2}\der y}.
\end{equation*}
We now use the uniform boundedness obtained above to prove the sharp rate. Since $|\delta_\alpha|\leq R$, $b:=2c_0\delta_\alpha$
is bounded uniformly in $\alpha$, i.e.
$
    |b|\le 2c_0R.
$ We set $B := 2 c_0 R$. 
Using $|y-\delta_\alpha|^2 = |y|^2-2(y\cdot\delta_\alpha) +|\delta_\alpha|^2$, 
we get
\begin{align*}
    \delta_\alpha = \frac{ \int_{\mathbb R^d}
    y e^{-\alpha f(x_{\min}+y)}
    e^{(b\cdot y)}
    e^{-c_0|y|^2}
    \der y}{    \int_{\mathbb R^d}
    e^{-\alpha f(x_{\min}+y)}
    e^{(b\cdot y)}
    e^{-c_0|y|^2}
    \der y}.
\end{align*}
For $|b|\le 2c_0 R$, denote
\begin{align*}
    \mathrm{D}_\alpha(b)
   & :=
    \int_{\mathbb R^d}
    e^{-\alpha f(x_{\min}+y)}
    e^{(b\cdot y)}
    e^{-c_0|y|^2}
    \der y,\\ 
    \mathrm{N}_\alpha(b)
   & :=
    \int_{\mathbb R^d}
    y\,e^{-\alpha f(x_{\min}+y)}
    e^{(b\cdot y)}
    e^{-c_0|y|^2}
    \der y.
\end{align*}
We now prove
\begin{equation}
    \mathrm{D}_\alpha(b)\ge c\alpha^{-d/2}\quad \text{and}\quad
    |\mathrm{N}_\alpha(b)|\le C\alpha^{-(d+2)/2},
\end{equation}
uniformly for $|b|\le 2c_0 R$. If we choose sufficiently large $\alpha $ such that $\alpha^{-1/2} \leq \hat{r}$ , we ascertain (see \eqref{tso_eqn_2.8})
\begin{equation*}
    \alpha f(x_{\min}+y)\le \hat C,
\end{equation*}
whenever $|y| \leq \alpha^{-1/2}$. Moreover, using $|b|\le 2c_0 R$, we obtain
\begin{align}\label{tso_eqn_2.29}
    \mathrm{D}_\alpha(b)
    &\ge
    \int_{|y|\le \alpha^{-1/2}}
    e^{-\alpha f(x_{\min}+y)}
    e^{(b\cdot y)}
    e^{-c_0|y|^2} \der y     \geq     c\alpha^{-d/2}.
\end{align}

We next estimate $\mathrm{N}_\alpha(b)$. We first fix a constant $r>0$ sufficiently small. Under Assumption~\ref{tso_assum_2}, there exists $\eta>0$ such that
\begin{equation*}
    f(x_{\min}+y)\ge \eta,
    \qquad |y|\ge r.
\end{equation*}
Therefore, 
\begin{align}\label{tso_eqn_2.31}
    &\int_{|y|\ge r}
    |y|e^{-\alpha f(x_{\min}+y)}
    e^{(b\cdot y)}
    e^{-c_0|y|^2}
    \der y \leq
    e^{-\alpha\eta}
    \int_{\mathbb R^d}
    |y|e^{B|y|}e^{-c_0|y|^2}\der y
    \leq    Ce^{-\alpha\eta},
\end{align}
where $C$ is independent of $\alpha$. It remains to estimate the bound when  $|y|<r$.   Let $y = \frac{z}{\sqrt{\alpha}}$ therefore $\der y = \alpha^{-d/2} \der z $. 
We have 
\begin{align}
    \int_{|y|<r}
    y\,&e^{-\alpha f(x_{\min}+y)}
    e^{(b\cdot y)}
    e^{-c_0|y|^2}
    \der y     \nonumber \\  & =  \alpha^{-(d+1)/2}
    \int_{|z|<r\sqrt{\alpha}} z    \exp\left(
        -\alpha f\left(x_{\min}+\frac{z}{\sqrt{\alpha}}\right)+
        \frac{(b\cdot z)}{\sqrt{\alpha}} - \frac{c_0|z|^2}{\alpha}
    \right)\der z. 
    \end{align}
    Using Taylor's theorem, we get
\begin{align}
f(x_{\min}+y) = \frac{1}{2} y^\top H y + \frac{1}{6} D^3 f(x_{\min})[y,y,y] + \mathcal{O}(|y|^4),
\end{align}
where $
    H:=\nabla^2 f(x_{\min})>0$. Choose $0<\gamma<1/8$. Then, for $|z|\le \alpha^\gamma$, we have
\begin{align*}
    -\alpha f\left(x_{\min}+\frac z{\sqrt\alpha}\right) + \frac{b\cdot z}{\sqrt\alpha} -
    \frac{c_0|z|^2}{\alpha}
    &=  -\frac{1}{2} z^\top H z
    +  \frac{1}{\sqrt\alpha}A(z)
    + Q_\alpha(z),
\end{align*}
where $ A(z)
    := (b\cdot z) - \frac{1}{6} D^3f(x_{\min})[z,z,z]$, $|Q_\alpha(z)| \leq C\frac{|z|^2+|z|^4}{\alpha}$ and uniformly for $|b|\le B$, $|A(z)|\le C(1+|z|^3)$. Since $\gamma<1/8$, $\frac{1}{\sqrt\alpha}A(z) + Q_\alpha(z)$
tends to zero uniformly on $|z|\leq \alpha^\gamma$. Hence Taylor's theorem, uniformly for \(|b|\le B\), gives
\begin{equation}\label{tso_eqn_3.31}
    \exp\left(
        \frac1{\sqrt\alpha}A(z)+Q_\alpha(z)
    \right) = 1 + \frac{1}{\sqrt\alpha}A(z) +    \mathcal{O}\left(\frac{1+|z|^8}{\alpha}\right)
\end{equation}
on \(|z|\le \alpha^\gamma\). 
Since  $\int_{|z| < \alpha^\gamma} z e^{-\frac{1}{2}z^{\top}H z} \der z = 0$,  we get
\begin{equation}\label{tso_eqn_2.40}
   \bigg| \int_{|z|<\alpha^\gamma}
    ye^{-\alpha f(x_{\min}+y)}
    e^{(b\cdot y)}
    e^{-c_0|y|^2}
    \der y\bigg| \leq
    C\alpha^{-(d+2)/2},
\end{equation}
uniformly for $|b|\leq 2c_0 R$.  As in \eqref{tso_eqn_2.8}, for sufficiently small $r >0$, there is a constant $\tilde{C}$ such that
\begin{align*}
    f(x_{\min} + y) \geq \tilde{C} |y|^2,\qquad |y|\leq r.
\end{align*}
For all sufficiently large $\alpha$, since $|z|>\alpha^\gamma$, $\frac{B|z|}{\sqrt\alpha}
    \leq \frac{\tilde C}{2}|z|^2$. Therefore, for sufficiently large $\alpha$, we get
\begin{align}\label{eqn_tso_3.33}
        \alpha^{-(d+1)/2}
    \int_{\alpha^\gamma<|z|<r\sqrt\alpha}
    |z|\exp\left(-\tilde{C} |z|^2
        +
        \frac{B|z|}{\sqrt\alpha}
    \right)\der z & \leq  
    C\alpha^{-(d+1)/2}
    \int_{|z|>\alpha^\gamma}
    |z|e^{-\frac{\tilde{C}}{2}|z|^2}\,\der z  \nonumber 
    \\   &  \leq  C\alpha^{-(d+2)/2}.
\end{align}
Combining \eqref{tso_eqn_2.31}, \eqref{tso_eqn_2.40} and \eqref{eqn_tso_3.33}, we get
\begin{align}\label{tso_eqn_2.41}
    |\mathrm{N}_\alpha(b)| \leq
    C\alpha^{-(d+2)/2}.
\end{align}
Therefore, using \eqref{tso_eqn_2.29} and \eqref{tso_eqn_2.41}, we obtain
\begin{align}
 |m_\alpha - x_{\min}| =    \left|\frac{N_\alpha(b)}{D_\alpha(b)}\right|
    \leq
    \frac{
        C\alpha^{-(d+2)/2} }{        c\alpha^{-d/2}} =
    \frac{C}{\alpha}.
\end{align}
\end{proof}
The Laplace asymptotics technique used above, based on Taylor expansion and coercivity-type conditions on $f$, is classical \cite{wong2001asymptotic, tierney1986accurate, ellis1982laplace}. What is specific to our setting is that the relevant integral is not fixed but self-consistent: the Gaussian kernel in $\mathcal{T}_\alpha$ is centered at the unknown consensus point, so the asymptotic estimate only becomes effective once coupled with the fixed-point equation \eqref{tso_eqn_3.15}. This coupling is what brings the treasure hunter to within $\mathcal{O}(1/\alpha)$ of the global minimizer, and therefore the analysis does not reduce to a direct application of the Laplace method.

\section{Conditional mean ODE and macroscopic structure}\label{conditional_mean_ode_section}
The purpose of this section is to identify the macroscopic gradient structure underlying the conditional mean dynamics of the explorer population. Although the discussion is partly formal, it highlights an important connection between interacting particle methods with consensus drift and gradient-based dynamics: consensus-drift can encode a smoothed descent direction for the underlying objective landscape. To this end, let $\bar{X}_t := \mathbb{E}( X_t\, \mid \, \mathcal{F}^0_t)$. From \eqref{tso_eqn_mf}, we have
\begin{align} \label{tso_eqn_2.25}
    \der X_t &= -\eta_1(X_t - \m^\alpha(\mu_t)) \der t - \eta_2 (X_t - Y_t)\der t  \nonumber \\  &  \quad+ \lambda_J(1 - \cos(\phi)) (-X_{t^-} + \mathrm{M}^{\alpha}(\mu_t, Y_t)) \der t  + \der\mathcal{M}_t,
\end{align}
where $\mathcal{M}_t$ denotes the martingale term i.e. 
\begin{align}
    \der \mathcal{M}_t =  \sigma\der W_t + \int_{\mathbb{R}^d}\big( ( 1- \cos(\phi))\big( - X_{t^-} + \mathrm{M}^{\alpha}(\mu_t, Y_t)\big) + \sigma_J \sin(\phi)z\big) \tilde{N}(\der t, \der z) 
\end{align}
with $\tilde{N}(\der t, \der z) = N(\der t, \der z) - \lambda_J \nu(\der z) \der t  $, i.e. the compensated Poisson random measure. Taking expectation on  both sides of \eqref{tso_eqn_2.25} conditional on $\mathcal{F}_t^0$, we obtain 
\begin{align}\label{tso_eqn_2.27}
    \frac{\der}{\der t} \bar{X}_t &= -\hat{\eta}_1(\bar{X}_t - \m^\alpha(\mu_t)) - \hat{\eta}_2 (\bar{X}_t - Y_t), 
\end{align}
where $\hat{\eta}_1 := \eta_1 + \lambda_J ( 1 - \cos(\phi)) \frac{\kappa_1}{\kappa_1 + \kappa_2}$ and $ \hat{\eta}_2 := \eta_2 + \lambda_J(1- \cos(\phi))\frac{\kappa_2}{\kappa_1 + \kappa_2}$.  While \eqref{tso_eqn_2.27}   describes the dynamics of $\bar{X}_t$ with linear pull toward the global weighted average $\m^\alpha(\mu_t)$ and the local tracking by treasure hunter $Y_t$, we next discuss the underlying descent behavior. To investigate the gradient-flow structure embedded in \eqref{tso_eqn_2.27}, we need to link the spatial distribution of the explorers to objective function $f$ and finally with effective free energy as introduced in \eqref{tso_eqn_eff_free_ene} below.
Let 
\begin{align}
    Z_t^\alpha = \int_{\mathbb{R}^d} e^{-\alpha f(x) } \mu_t(\der x)
\end{align}
and define the tilted conditional law 
\begin{align}
    \pi_t^\alpha(\der x)  =  \frac{e^{-\alpha f(x)}}{Z_t^\alpha}\mu_t(\der x).
\end{align}
In this notation, we have
\begin{align}
    \m^{\alpha}(\mu_t) = \int_{\mathbb{R}^d} x \pi_t^\alpha (\der x). 
\end{align}

Let us assume that conditional law $\mu_t$ admits a Stein kernel $\mathcal{K}_t$ around its mean $\bar{X}_t$, i.e. the following holds:
\begin{align}
     \nabla_x\cdot( \mathcal{K}_t(x) \mu_t(x)) = -(x - \bar{X}_t)\mu_t(x),
\end{align}
and integrating with sufficiently fast decaying test function $g$, we get
\begin{align}
    \int_{\mathbb{R}^d}(x- \bar{X}_t) g(x) \mu_t(\der x) = \int_{\mathbb{R}^d}\mathcal{K}_t(x) \nabla g(x) \mu_t(\der x). 
\end{align}
Taking $g = e^{-\alpha f(x)}$, we have
\begin{align}
 \int_{\mathbb{R}^d}(x- \bar{X}_t) e^{-\alpha f(x)} \mu_t(\der x) = -\alpha \int_{\mathbb{R}^d} \mathcal{K}_t(x) \nabla f (x) e^{-\alpha f(x)} \mu_t(\der x). 
\end{align}
Dividing by $Z_t^\alpha$ gives
\begin{align}\label{tso_eqn_3.11}
    -( \bar{X}_t - \m^{\alpha}(\mu_t)) = -\alpha \mathbb{E}_{\pi_t^\alpha}\big( \mathcal{K}_t(X) \nabla f(X)  \big).
\end{align}
Therefore, using \eqref{tso_eqn_3.11}  in \eqref{tso_eqn_2.27}, we get 
\begin{align}\label{tso_eqn_3.12}
\frac{\der}{\der t} \bar{X}_t &= -\hat{\eta}_1\alpha \mathbb{E}_{\pi_t^\alpha}\big( \mathcal{K}_t(X) \nabla f(X)  \big) - \hat{\eta}_2 (\bar{X}_t - Y_t).
\end{align}
The above calculations show that the dynamics of $\bar{X}_t$,  excluding the effects of treasure hunter's dynamics, follows a weighted averaged, Stein preconditioned gradient direction.

The Stein-kernel identity above provides a useful formal link between the conditional mean dynamics and the gradient of the original objective $f$, but by itself it does not provide a closed macroscopic descent, and an actual descent interpretation would additionally require positive-definiteness of kernel. However, consensus drift driven methods are inherently designed to bypass local traps.  We therefore turn to a different viewpoint based on free energy and Gaussian smoothing, which reveals how consensus drift induces descent for an effective free energy rather than directly for $f$. We begin by defining the free energy of a Gaussian cloud centered at $p$
\begin{align}\label{tso_eqn_eff_free_ene}
    F_{\alpha,\sigma_\star^2}(p) = -\frac{1}{\alpha} \log \int_{\mathbb{R}^d} e^{-\alpha f(x)} \varphi_{\sigma_\star^2}(x-p)\der x,
\end{align}
where $\varphi_{\sigma_\star^2}$ is the density of $\mathcal{N}(0, \sigma_\star^2 I_d)$.  We write the exact Gibbs barycenter of this Gaussian cloud centered at $p$,
\begin{equation}
    \mathsf{m}^\alpha_\varphi(p) :=  \frac{\int_{\mathbb{R}^d} x e^{-\alpha f(x)} \varphi_{\sigma_\star^2}(x-p)\der x}{\int_{\mathbb{R}^d} e^{-\alpha f(x)} \varphi_{\sigma_\star^2}(x-p
    )\der x}.
\end{equation}
Differentiating $F_{\alpha,\sigma_\star^2}(p)$ with respect to $p$, and using the  identity $\nabla_p \varphi_{\sigma_\star^2}(x-p) = \frac{x-p}{\sigma_\star^2} \varphi_{\sigma_\star^2}(x-p)$, we obtain
\begin{equation}
    \nabla_p F_{\alpha,\sigma_\star^2}(p) = \frac{p - \mathsf{m}^\alpha_\varphi(p)}{\alpha\sigma_\star^2}.
\end{equation}
Rearranging yields the following:
\begin{equation}
    \mathsf{m}^\alpha_\varphi(p) - p = -\alpha\sigma_\star^2 \nabla F_{\alpha,\sigma_\star^2}(p).
    \label{tso_eqn_barycenter_gradient}
\end{equation}
Using \eqref{tso_eqn_barycenter_gradient} in \eqref{tso_eqn_2.27}, we get
\begin{align}\label{tso_ode_mf}
     \frac{\der}{\der t} \bar{X}_t &= -\hat{\eta}_1\alpha\sigma_\star^2 \nabla_{\bar{x}} F_{\alpha,\sigma_\star^2}(\bar{X}_t) - \hat{\eta}_2 (\bar{X}_t - Y_t) + \hat{\eta}_1(\m^\alpha(\mu_t) - \mathsf{m}^{\alpha}_\varphi(\bar{X}_t)).
\end{align}
Equation~\eqref{tso_ode_mf} shows
how the hunter $Y_t$ alters the average dynamics. Because $Y_t$ is updated via a Poisson mechanism that strictly accepts historically better objective values ($f(Y_t) \le f(Y_{t^-})$), it acts as a monotonically improving, global memory state.  The term $(Y_t - \bar{X}_t)$ injects an adaptive, strongly convex restorative force into the macroscopic ODE. Rather than following the time-varying weighted averaging $\m^\alpha$, the swarm is continuously anchored to $Y_t$. This breaks the any reversibility of the standard gradient flow even if $\mu_t$ belongs to Gaussian manifold (i.e. third term in \eqref{tso_ode_mf} is zero). 

Let us define the  effective potential $\mathcal{V}(x; Y_t)$:
\begin{equation}
    \mathcal{V}(x\,;\, Y_t) = \hat{\eta}_1 \alpha \sigma_\star^2 F_{\alpha,\sigma_\star^2}(x) + \frac{\hat{\eta}_2}{2}|x - Y_t|^2.
\end{equation}
The ODE \eqref{tso_ode_mf} for the swarm's time-varying center can then be written as:
\begin{equation}\label{tso_ode_grad_flow}
    \frac{\der \bar{X}_t}{\der t} = -\nabla_x \mathcal{V}(\bar{X}_t\,;\, Y_t) +  \hat{\eta}_1(\m^\alpha(\mu_t) - \mathsf{m}^{\alpha}_\varphi(\bar{X}_t)).
\end{equation} 
A major simplification happens if we assume that $\mu_0 = \mathcal{N}(\bar{X}_0, \sigma_\star^2)$, the variance matching condition \eqref{tso_match_condn} holds and for a transient duration of time the following is true 
\begin{align}\label{tso_eqn_transient_condi}
    \kappa_1 ( \m^\alpha(\mu_t) -  \bar{X}_t) + \kappa_2 (Y_t - \bar{X}_t) = 0. 
\end{align}
 In this case, ODE \eqref{tso_ode_grad_flow} simply reduces to gradient flow:
 \begin{align}
      \frac{\der \bar{X}_t}{\der t} = -\nabla_{\bar{x}} \mathcal{V}(\bar{X}_t\,;\, Y_t).
 \end{align}
 Note that the above gradient flow interpretation is only true for transient duration of time when alignment condition \eqref{tso_eqn_transient_condi} holds  and therefore for not all $t$. Although \eqref{tso_match_condn} is restrictive and  \eqref{tso_eqn_transient_condi} is transient, this allows us to gain insights into the TSO dynamics and thus establish  a connection between explorers' dynamics and the macroscopic optimization landscape.

Based on the discussion above, we infer that, at the microscopic level, individual explorers traverse highly non-convex terrain. However, at the macroscopic level, the swarm performs a continuous-time gradient descent. This macroscopic viewpoint is visualized in Figure~\ref{fig_from_eyes_of_swarm} which illustrates the smoothed free-energy landscape seen by the swarm, in contrast to the original rugged objective landscape explored by individual particles.
\begin{figure}[ht]
     \centering
     \includegraphics[width=1\linewidth]{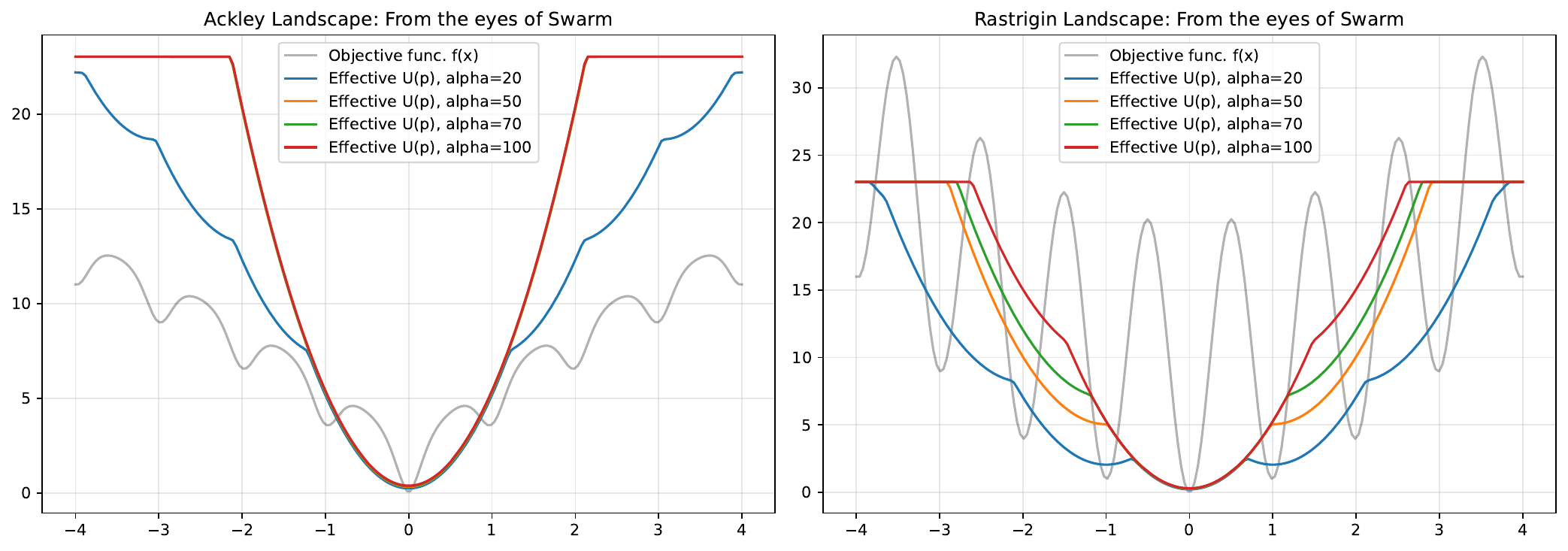}
     \caption{\textit{From the eyes of swarm.} Macroscopic potential induced by Gaussian swarm smoothing for the one-dimensional Ackley and Rastrigin objectives. Here, $U(p) = \sigma_\star^2\alpha F_{\alpha,\sigma_\star^2}(p)$ (see \eqref{tso_eqn_barycenter_gradient}) with $\sigma_\star = 0.1$. The black curves show the original objective functions, while the colored curves show the corresponding smoothed potentials $U$ depending on free energy $F_{\alpha, \sigma_\star^2}$. The figure illustrates that the swarm evolves according to a Gibbs-smoothed macroscopic landscape rather than the original microscopic objective.}
     \label{fig_from_eyes_of_swarm}
 \end{figure}

 \section{Uncertainty quantification via TSO}
\label{tso_sec_uq}
In this section, we describe how the TSO dynamics can be used as a building block for uncertainty quantification in inverse problems. The main point is that, under the variance-matching condition \eqref{tso_match_condn} introduced above, the equilibrium explorer cloud has an explicit Gaussian structure. However, this Gaussian covariance is determined by algorithmic parameters and therefore cannot, by itself, be interpreted as posterior uncertainty. Instead, we view the TSO explorers' cloud  as a reference ensemble which represents the basin of interest of the objective function. Therefore, a subsequent geometry-calibration step is needed to transform this reference ensemble into a cloud whose covariance reflects the local geometry of the inverse problem. We consider an inverse problem of the form
\begin{align}
y = \mathcal{G}(x) + \zeta,
\qquad
\zeta\sim \mathcal N(0,\Gamma),
\end{align}
where $x\in\mathbb R^d$ is the unknown parameter, $y\in\mathbb R^m$ is the observed data, $\mathcal G:\mathbb R^d\to\mathbb R^m$ is the forward map, and $\Gamma$ is the observational noise covariance. If a Gaussian prior $x\sim\mathcal N(x_0,C_0)$ is imposed, then the negative log-posterior, up to an additive constant, is
\begin{align}
\Phi(x) :=
\frac{1}{2}
\big|\Gamma^{-1/2}\bigl(\mathcal G(x)-y\bigr)\big|^2
+ \frac12 \big|C_0^{-1/2}(x-x_0)\big|^2 .
\label{eq:tso_uq_phi}
\end{align}
In this setting, the optimization target in TSO is $f=\Phi$. For large values of  $\alpha$, the Gibbs-weighted average $\mathfrak m^\alpha$ amplifies regions of small objective value, and the treasure hunter is designed to capture such low-objective regions. Thus, in the inverse-problem setting, TSO may be interpreted as a derivative-free mechanism for locating a MAP-type region of the posterior distribution.

As discussed earlier, the steady state of coupled dynamics is a self-consistent equilibrium satisfying $Y_\star=m_\star$ and
\begin{align}
m_\star =
\m^\alpha
\bigl(
\mathcal N(m_\star,\sigma_\star^2 I_d)\bigr),
\label{eq:tso_self_consistency_uq}
\end{align}
the equilibrium explorer law is
\begin{align}
\mu_\star^{\rm TSO}
=
\mathcal N(m_\star,\sigma_\star^2 I_d).
\label{eq:tso_raw_gaussian_uq}
\end{align}
The covariance $\sigma_\star^2 I_d$ is a parameter in the method chosen to keep a balance of exploration and exploitation. Therefore, it does not encode the anisotropic directions of uncertainty associated with the posterior distribution. In particular, in an ill-posed inverse problem, the posterior may be sharply concentrated along data-informed directions and diffuse along weakly observed or prior-dominated directions. Therefore, we use TSO cloud as a localized reference measure and utilize a transport map which pushes it to a geometry-aware approximation of the posterior. This separates the roles of the algorithm: TSO performs global optimization and basin capture, while the post-processing map performs local uncertainty calibration. For this purpose, local Laplace and Gauss-Newton covariance approximations of the posterior are used in Bayesian inverse problems (see \cite{crestel2017optimal}). The derivative-free ensemble covariance calibration is in the spirit of ensemble Kalman methods for inverse problems~\cite{iglesias2013ensemble}, while the optimal transport interpretation mentioned below is related to ensemble transforms~\cite{reich2013nonparametric}.

\paragraph{Laplace-calibrated TSO.}
Suppose first that sufficient derivative information is available near $m_\star$. If $\Phi$ is twice differentiable and $m_\star$ is close to a nondegenerate local minimizer, then the local Laplace approximation of the posterior is
\begin{align}
\pi_{\rm Lap} \approx \mathcal N(m_\star,\Sigma_{\rm Lap}),
\qquad \Sigma_{\rm Lap} = \bigl(\nabla^2\Phi(m_\star)\bigr)^{-1},
\label{eq:tso_laplace_covariance}
\end{align}
provided $\nabla^2\Phi(m_\star)$ is positive definite. In inverse problems, one often replaces the exact Hessian by the Gauss-Newton Hessian
\begin{align}
H_{\rm GN}(m_\star)
=
J(m_\star)^\top\Gamma^{-1}J(m_\star)
+
C_0^{-1},
\label{eq:tso_gn_hessian}
\end{align}
where $J(m_\star)=D\mathcal G(m_\star)$. The corresponding covariance is $\Sigma_{\rm GN}
= H_{\rm GN}(m_\star)^{-1}$. 
Given the TSO equilibrium cloud $X^{i,N}\sim\mathcal N(m_\star,\sigma_\star^2 I_d)$, define the affine map
\begin{align}
T_{\rm Lap}(x)
=
m_\star
+
\Sigma_{\rm Lap}^{1/2}
(\sigma_\star^2 I_d)^{-1/2}
(x-m_\star).
\label{eq:tso_laplace_transport}
\end{align}
Then $(T_{\rm Lap})_\#\mathcal N(m_\star,\sigma_\star^2 I_d) = \mathcal N(m_\star,\Sigma_{\rm Lap})$. Similarly, if the Gauss-Newton covariance is used, one obtains $T_{\rm GN}(x) = m_\star + \Sigma_{\rm GN}^{1/2}(\sigma_\star^2 I_d)^{-1/2}(x-m_\star)$.

\paragraph{Derivative-free ensemble calibration.}
Let
$R(x):=\bigl(\Gamma^{-1/2}(\mathcal{G}(x)-y),\,
\Gamma_0^{-1/2}(x-m_0)\bigr)^{\top}\in\mathbb{R}^{m+d}$,
so that $\Phi(x)=\frac{1}{2}| R(x)|^2$. At calibration step
$\ell$, set $R_\ell^{i,N}:=R(X_\ell^{i,N})$,
$\bar{X}_\ell:=N^{-1}\sum_{i=1}^N X_\ell^{i,N}$, and
$\bar{R}_\ell:=N^{-1}\sum_{i=1}^N R_\ell^{i,N}$. Define
$C_\ell^{XX}:=(N-1)^{-1}\sum_{i=1}^N
(X_\ell^{i,N}-\bar{X}_\ell)
(X_\ell^{i,N}-\bar{X}_\ell)^\top$, $C_\ell^{XR}:=(N-1)^{-1}\sum_{i=1}^N(X_\ell^{i,N}-\bar{X}_\ell)(R_\ell^{i,N}-\bar{R}_\ell)^\top$, and $C_\ell^{RR}:=(N-1)^{-1}\sum_{i=1}^N
(R_\ell^{i,N}-\bar{R}_\ell)(R_\ell^{i,N}-\bar{R}_\ell)^\top$, with $C_\ell^{RX}:=(C_\ell^{XR})^\top$. For $L$ equally tempered calibration steps, the Kalman gain is
\begin{equation}
    K_\ell =
    C_\ell^{XR}
    \bigl(C_\ell^{RR}+L I_{m+d}\bigr)^{-1}.
    \label{tsoeqn_kalman_gain}
\end{equation}
Since the target residual is zero, the ensemble mean is updated as
\begin{equation}
    \bar{X}_{\ell+1} =  \bar{X}_\ell-K_\ell\bar{R}_\ell.
    \label{eq:augmented_kalman_mean}
\end{equation}
The corresponding local Kalman covariance is
\begin{equation}
    \Sigma_{\ell+1}^{\mathrm{ens}}
    = C_\ell^{XX} -  C_\ell^{XR}\big(C_\ell^{RR}+L I_{m+d}\big)^{-1}C_\ell^{RX}.
    \label{tsoeqn_kalman_covariance}
\end{equation}
The calibrated ensemble is updated by the following deterministic affine transform:
\begin{equation}
    X_{\ell+1}^{i,N}
    =
    \bar{X}_{\ell+1}
    +
    \bigl(\Sigma_{\ell+1}^{\mathrm{ens}}\bigr)^{1/2}
    \bigl(C_\ell^{XX}\bigr)^{-1/2}
    \bigl(X_\ell^{i,N}-\bar{X}_\ell\bigr),
    \qquad i=1,\ldots,N.
    \label{tsoeq:kalman_ensemble}
\end{equation}

Here, $C_0^{XX}$ is a local reference covariance centered at the TSO hunter's position, rather than the original Bayesian prior covariance.
It is initialized either from the unweighted empirical covariance of the terminal
TSO explorers or from $\sigma_\star^2 I_d$. The prior enters explicitly
through the residual component $\Gamma_0^{-1/2}(x-x_0)$. Hence, the
calibration is interpreted as a derivative-free local Kalman
approximation of the posterior, evolving TSO explorers according to the data informed geometry of the inverse problem.

\paragraph{Optimal-transport interpretation.}
The affine maps can be interpreted as optimal transport maps from the raw TSO equilibrium measure to a calibrated posterior approximation. In the Gaussian case, the optimal transport map $T(x) = m + A(x-m)$ where $A = \sigma_\star^{-1}\Sigma_p^{1/2}$. 
 
\begin{figure}[H]
     \centering
     \includegraphics[width=1\linewidth]{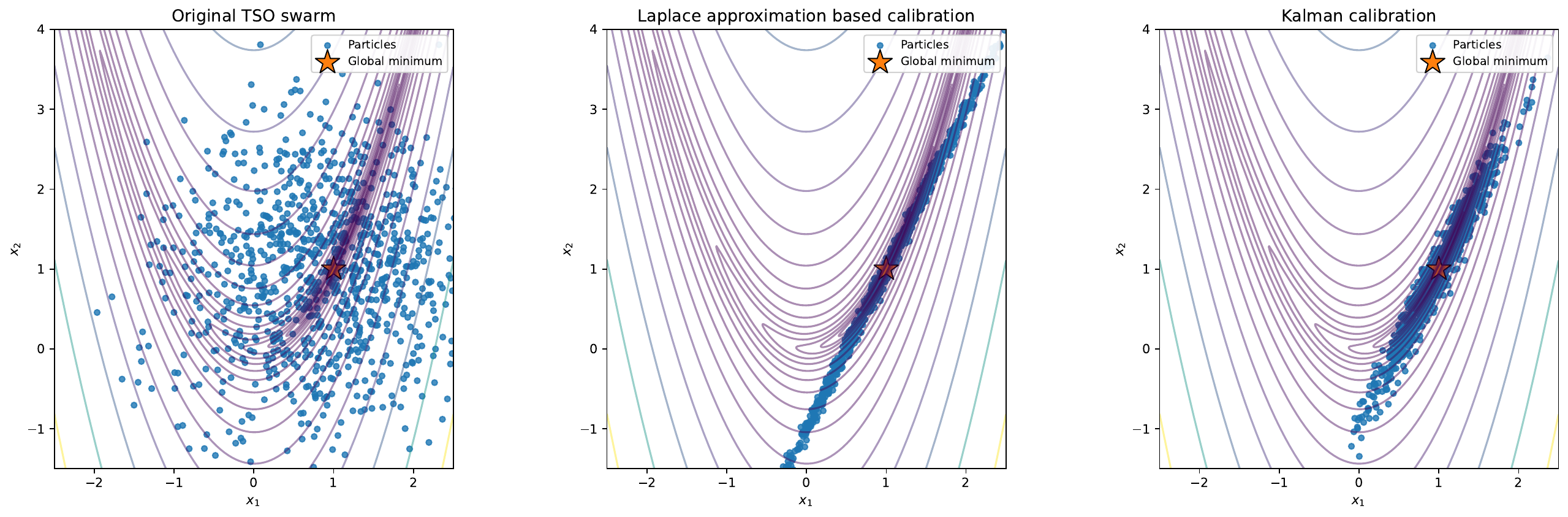}
     \caption{Rosenbrock objective $f(x_1, x_2) = (1 - x_1)^2 + 100(x_2 - x_1^2)^2 = |r(x)|^2$, with $r(x) = (x_1 - 1, 10(x_2 - x_1^2))$, global minimizer $x_\star = (1, 1)$, $N = 1000$, steady-state TSO covariance $\sigma_\star^2 I_2$ with $\sigma_\star = 1$. The left plot shows the isotropic TSO reference cloud centered at $x_\star$. The middle panel shows Laplace calibration, where the cloud is affinely transformed to have covariance $(\nabla^2 f(x_\star))^{-1}$. The right plot shows derivative-free Kalman calibration with $\Gamma = \frac{1}{2} I_2$, which reshapes the same TSO cloud. Both calibrations convert the steady cloud into an anisotropic cloud aligned with the local valley geometry.}
     \label{tso_uq}
 \end{figure}

\section{Numerical experiments}\label{numerical_exp_sec}
In Section~\ref{tso_sect_implementation}, we present the numerical implementation of TSO interacting particle system and in Section~\ref{comp_sec_cb_tso}, we compare TSO with anisotropic CBO and additive CBO on four optimization problems whose objective functions are derived from ODE constrained inverse problems.   In Section~\ref{tso_subsec:low_dimensional_experiment}, we test the performance of CBS, EKS and TSO + Kalman calibration on a Bayesian inverse problem. 

\subsection{Implementation of TSO}\label{tso_sect_implementation}
Consider the uniform partition $0=t_0<\dots<t_n = T$ of the time interval $[0,T]$ with discretization step $h$, i.e $t_{k +1} = t_k + h$. The TSO method evolves both an explorer population $ X_k^1,\dots,X_k^N$ 
and a hunter variable $Y_k\in\mathbb{R}^d$. Let $\cE^N_k$ denote the empirical measure of $N$ explorers at $k$-th iteration. For brevity, we take $\eta_2 = 0 $ and replace notation $\eta_1$ with $\eta$. Given the weighted average $\m_k : = \m^\alpha(\cE^N_k)$, define the hunter-dependent jump center
\begin{equation*}
    \mathrm M_k
    = \frac{
        \kappa_1 \m_k+\kappa_2 Y_k}{\kappa_1+\kappa_2
    }.
\end{equation*}
For each particle, draw a full Poisson jump count $    J_k^i \sim \operatorname{Poisson}(\lambda_J h)$.  Conditional on $J_k^i$, simulate independent Gaussian marks
\begin{align*}
    \zeta_{k,\ell}^i \sim \mathcal{N}(0,I_d),
    \qquad
    \ell=1,\dots,J^i_k.
\end{align*}
The explorer update is
\begin{align}
    X_{k+1}^i
    &=
 X_k^i -
    \eta h\left(X_k^i-\m_k\right) +
    \sigma\sqrt{h}\,\xi_k^i +
    \sum_{\ell=1}^{J_{k}^{i}}
    \left[    (1-\cos\phi)\left(-X_k^i+ \mathrm M_{k}\right)
        +
 \sigma_J\sin\phi\,\zeta_{k,\ell}^i
    \right] \nonumber \\  
    & = X_k^i
    - \eta h\left(X_k^i-\m_k\right)
 +  \sigma\sqrt{h}\,\xi_k^i +
    J_k^i(1-\cos\phi)\left(-X_k^i+\mathrm M_k\right)
    + \sigma_J\sin\phi
    \sum_{\ell=1}^{J_k^i}\zeta_{k,\ell}^i.
\end{align}
After updating the explorer population, the new weighted average $\m_{k+1}=\m^\alpha(\cE^N_{k +1})$
is computed. The hunter drifts towards $\m_{k+1}$:
\begin{equation}
    \hat{Y}_{k+1}
    =  Y_k - \beta h\left(Y_k- \m_{k+1}\right).
\end{equation}
Let $B_{k+1} \sim \mathrm{Bernoulli}(1-\exp(-\lambda_Y h))$ denote whether a hunter jump is attempted. Then
\begin{equation}\label{tso_eqn_5.3}
Y_{k+1} =
\begin{cases}
\m_{k+1},
& \text{if } B_{k+1}=1 \text{ and } f(\m_{k+1})<f(\hat{Y}_{k+1}),\\
\hat{Y}_{k+1},
& \text{otherwise}.
\end{cases}
\end{equation}

Therefore, a hunter jump is attempted with probability $  1-\exp(-\lambda_Y h)$. If a hunter jump occurs and the updated $\m_{k+1}$ improves the current hunter objective value, i.e., if $f(\m_{k+1}) < f(\hat{Y}_{k+1})$ then the hunter teleports to $Y_{k+1}
    =  \m_{k+1}$, otherwise, $
    Y_{k+1}
    =
    \hat{Y}_{k+1}.$

\subsection{Comparison between additive CBO, anisotropic CBO and TSO}\label{comp_sec_cb_tso}
\subsubsection{Objective functions derived via ODE-constrained problems} 
We evaluate the optimization methods on four two-dimensional
ODE-constrained inverse problems. In each example, the decision variable $    x=(x_1,x_2)\in\Theta\subset\mathbb R^2$
specifies an initial velocity, momentum, or position. For each candidate
$x$, we solve an initial value problem
\begin{equation}\label{tso_eqn_new_6.4}
    \dot z(t)=G(t,z(t);x),\qquad z(0)=z_0(x),
    \qquad t\in[0,t_f],
\end{equation}
where $G$ is a vector field varying with test problem and we compare the terminal state with a reference terminal state generated by a
known reference parameter $x_{\rm ref}$. For all test problems, the objective function has the form
\begin{align}\label{tso_eqn_new_6.5}
    f(x)=
    \frac12\left(
        |q_x(T)-q_{\rm ref}(T)|^2
        +
        \omega_v|v_x(T)-v_{\rm ref}(T)|^2
    \right) +\frac{\lambda_{\rm regu}}{2}
    |\theta-\theta_{\rm ref}|^2,
\end{align}
where $q$ denotes the position/configuration variables, $v$ denotes the
velocity or momentum variables, $\omega_v\ge0$ is a velocity-weighting
parameter, and $\lambda_{\rm regu}=0.1$ is the regularization weight.
We construct four objective functions corresponding to the following problems : (i)
Charged particle in a nonuniform electromagnetic field, (ii) Double pendulum, (iii) Magnetic confined particle, (iv) Damped particle in multi-well potential. The detailed formulation and construction of these model problems are provided in the Appendix~\ref{detal_exp_appendix}. However, our main focus is comparison of TSO with other methods, for us what is important is the geometric features/topography of these functions and therefore the corresponding contour plots are illustrated in Figure~\ref{tso_fig_contour_plots}. 

In this experiment, we compare with two other methods anisotropic CBO and additive-noise CBO. For the sake of completeness, we write below the one-step procedure of the two methods. Let $\xi_k^i \sim \mathcal{N}(0,I_d)$ be independent standard Gaussian vectors. The anisotropic CBO dynamics are implemented as
\begin{equation}
    X_{k+1}^i
    = X_k^{i}
    - h\lambda_\mathrm{aniso}\left(X^i_k- \m_k\right)
    + \sigma_{\rm aniso}
    \mathrm{Diag} (X_k^i- \m_k)
        \sqrt{h}\,\xi^i_{k},
    \qquad
    i=1,\dots,N,
\end{equation}
where $\mathrm{Diag}(a)$ denotes the diagonal matrix with diagonal $a$, and $\lambda_{\rm aniso}>0$  and  $\sigma_{\rm aniso}>0$ are the consensus drift and diffusion parameters. The additive CBO dynamics are implemented as
\begin{equation}
    X_{k+1}^{i}
    = X_k^i - h\lambda_\mathrm{add}\left(X_k^i-\m_k\right)
    +  \sigma_{\rm add}\sqrt{h}\,\xi_k^i,
    \qquad
    i=1,\dots,N,
\end{equation}
where $\lambda_{\rm add}>0$ is the consensus drift parameter and $\sigma_{\rm add}>0$ is the additive diffusion coefficient.

\begin{figure}[ht]
    \centering
    \includegraphics[width=1\linewidth, height = 0.5\textheight]{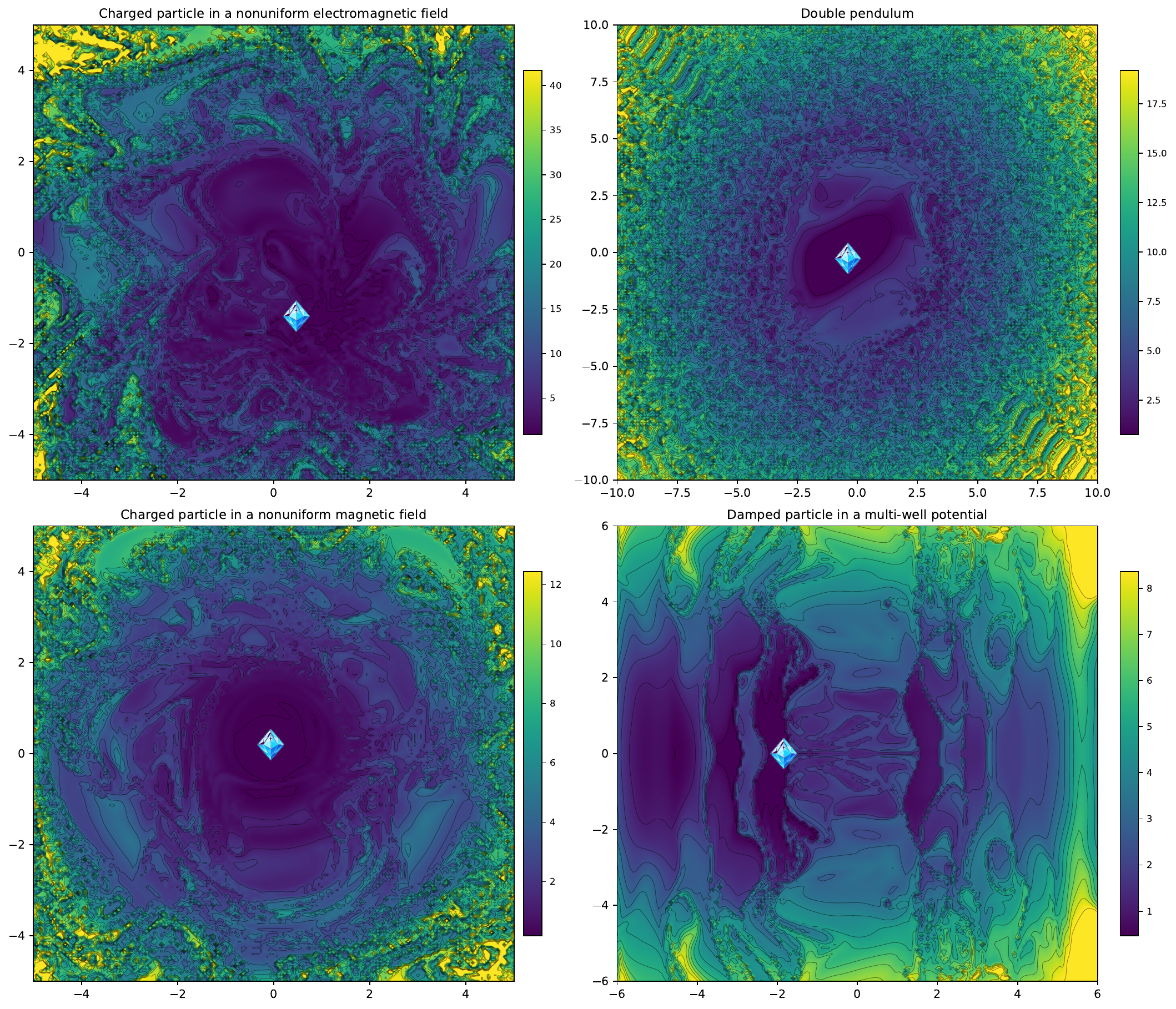}
    \caption{Contour plots of the objective functions. Diamond denotes the global minimum.}
    \label{tso_fig_contour_plots}
\end{figure}

\subsubsection{Choice of parameters}
 The exploratory capabilities of the three methods are not directly comparable, because additive CBO has persistent additive noise, anisotropic CBO has state-dependent multiplicative noise, and TSO contains both Brownian noise and compound Poisson jumps. We therefore first derive a common initial one-step spread criterion. Let $  Z_k^i = X_k^i-\m_k$ denote the deviation of a particle from the weighted-average. For the purpose of parameter calibration, we use a frozen-consensus approximation over one time step. Let
\begin{align}
    S_k = \mathbb{E}|Z_k|^2
\end{align}
denote the mean-square particle spread. For additive CBO, freezing the consensus gives $  Z_{k+1} =
    (1-\lambda_{\rm add} h)Z_k
    + \sigma_{\rm add}\sqrt{h}\xi_k$. Thus
\begin{align*}
    S_1^{\rm add} =
    (1-\lambda_{\rm add} h)^2 S_0+    d\sigma_{\rm add}^2 h.
\end{align*}
For anisotropic CBO, the frozen-consensus spread satisfies
\begin{align}
    S_1^{\rm aniso}
    =  \left[
        (1-\lambda_{\rm aniso} h)^2 + \sigma_{\rm aniso}^2 h
    \right]S_0.
\end{align}
Therefore, matching the initial one-step spread of anisotropic CBO to that of additive CBO gives $S_1^{\rm aniso}  =  S_1^{\rm add}$. If both CBO variants have $\lambda_{\rm add} = \lambda_{\rm aniso}$, then
\begin{align}\label{tso_eqn_6.9}
    \sigma_{\rm aniso}^2
    =
    \frac{d\sigma_{\rm add}^2}{S_0}, \quad \text{i.e.} \quad \sigma_{\rm aniso}
    =     \sqrt{
        \frac{d\sigma_{\rm add}^2}{S_0}}.
\end{align}
For TSO, we again use a frozen-center approximation and, for brevity, take $\mathrm{M}_k\approx \m_k$. Let $a=1-\cos\phi$ and $J_k\sim\operatorname{Poisson}(\lambda_J h)$. 
Then the explorer deviation satisfies
\begin{align*}
    Z_{k+1}
    =
    \left(1-\eta h-a J_k\right)Z_k
    + \sigma\sqrt{h}\xi_k
    +  \sigma_J\sin\phi
    \sum_{\ell=1}^{J_k}\zeta_{k,\ell}.
\end{align*}
Using the exact Poisson moments $\mathbb{E}[J_k]=\lambda_Jh $ and $\E[J_k^2]=\lambda_Jh+(\lambda_Jh)^2$, 
the TSO spread factor is
\begin{align*}
    A^{\rm TSO}
    &=
    \mathbb{E}\left[\left(1-\eta h-aJ_k\right)^2
    \right]
    = 1 - h\left(2\eta+\lambda_J\sin^2\phi\right)
    + h^2
    \left[\eta+\lambda_J(1-\cos\phi)
    \right]^2.
\end{align*}
Hence $S_1^{\rm TSO}
    =  A^{\rm TSO}S_0
    +  dh
    \left[\sigma^2
        + \lambda_J\sigma_J^2\sin^2\phi
    \right]$. 
Initial one-step matching with additive CBO gives $S_1^{\rm TSO} = S_1^{\rm add}$. 
Solving for \(\sigma_J\) yields
\begin{align}
    \sigma_J
    =\sqrt{\frac{\left[(1-\lambda_{\rm add} h)^2-A^{\rm TSO}\right]S_0+ dh\left(\sigma_{\rm add}^2-\sigma^2\right) }{dh\lambda_J\sin^2\phi} }.
\end{align}

Initial one-step matching is useful for calibrating the early exploration scale, but it does not automatically imply long-time spread which aids the exploration. One other way to make the fair comparison is by matching stationary variance.  Stationary variance depends on the balance between contraction and noise injection. For additive CBO, the frozen-consensus spread recursion is
\begin{align}
    S_{k+1}^{\rm add} =
    (1-\lambda_{\rm add} h)^2S_k^{\rm add} +
    d\sigma_{\rm add}^2h,
\end{align}    
If \((1-\lambda_{\rm add} h)^2<1\), the stationary spread is
\begin{align}
    S_\infty^{\rm add}
 = \frac{d\sigma_{\rm add}^2h}{ 1-(1-\lambda_{\rm add} h)^2 } = \frac{ d\sigma_{\rm add}^2}{        2\lambda_{\rm add}-\lambda_{\rm add}^2h  } .
\end{align}
For TSO, the corresponding recursion is $  S_{k+1}^{\rm TSO} =
    A^{\rm TSO}S_k^{\rm TSO}
    +  dh\left[
        \sigma^2
        + \lambda_J\sigma_J^2\sin^2\phi
    \right]$. If \(A^{\rm TSO}<1\), the stationary spread is
\[
    S_\infty^{\rm TSO}
    =
    \frac{dh\left[\sigma^2
+\lambda_J\sigma_J^2\sin^2\phi
        \right]}{1-A^{\rm TSO}}.
\]
Simplifying \(1-A^{\rm TSO}\), this can be written as
\begin{equation}
    S_\infty^{\rm TSO}
    =
    \frac{
        d
        \left[
            \sigma^2 +            \lambda_J\sigma_J^2\sin^2\phi
        \right]
    }{  2\eta+\lambda_J\sin^2\phi - h\left[
            \eta+\lambda_J(1-\cos\phi)
        \right]^2 }.
\end{equation}
Matching the stationary spread of additive CBO and TSO requires $ S_\infty^{\rm add}
    =
    S_\infty^{\rm TSO}$.
Denoting $D_{\rm TSO}
    := 2\eta
    + \lambda_J \sin^2\phi
    - h\left[\eta+ \lambda_J(1-\cos\phi)
    \right]^2$, the stationary-variance matching condition gives
\begin{equation}\label{tso_eqn_6.14}
    \sigma_J
    = \sqrt{\frac{\dfrac{\sigma_{\rm add}^2 D_{\rm TSO}}
            {2\lambda_{\rm add}-\lambda^2_{\rm add} h}-\sigma^2}{\lambda_J \sin^2\phi}}.
\end{equation}

\subsubsection{Experiment results}
 For each objective and each method, we perform $100$ independent trials with ensemble sizes $N=50$ and $N=100$. The particles are initialized from an isotropic Gaussian distribution centered in a corner region chosen to be far from the global minimizer. The initialization is normalized so that
$$
S_0=\mathbb{E}\lvert X_0-\mu_0\rvert^2=1,
$$
which, in dimension $d=2$, corresponds to covariance $\tfrac{1}{2} I_2$. All methods use the time horizon $T=100$, step size $h=0.1$, weight parameter $\alpha=50$. For additive CBO, we take $\lambda_{\mathrm{add}}=1$ (i.e. drift parameter) and $
\sigma_{\mathrm{add}}=1.$ 
The anisotropic CBO diffusion is selected by matching its initial one-step mean-square spread to that of additive CBO, giving (see \eqref{tso_eqn_6.9})
$$
\lambda_{\mathrm{aniso}}=1\; (\text{i.e. drift parameter}),
\qquad
\sigma_{\mathrm{aniso}}
=\sqrt{\frac{d\sigma_{\mathrm{add}}^2}{S_0}}
=\sqrt{2}.
$$
For TSO, the parameters are
$$
\eta=1,\qquad
\sigma=0.25,\qquad
\phi=0.5,\qquad
\lambda_J=1,\qquad
\beta=0.5,\qquad
\lambda_Y=1,\qquad
\kappa_1=\kappa_2=1.
$$
The jump amplitude is chosen by matching the stationary spread of TSO to that of additive CBO using \eqref{tso_eqn_6.14}. 
A run is classified as successful when the returned estimate lies within $\Delta_x=0.1$ of the global minimizer $x_\ast$. 

The results are reported in Tables~\ref{tab:distance_success_sigma1_gaussian} and~\ref{tab:success_rates_N100}. For both CBO variants, the returned estimate is the final weighted consensus point. For TSO, success is recorded when the final hunter position satisfies the distance criterion. Across all four objectives and both ensemble sizes, TSO achieves the highest success rate. Its mean success rate increases from $71.75\%$ for $N=50$ to $77\%$ for $N=100$, compared with $20.25\%$ and $29\%$, respectively, for additive CBO.  These results indicate that the improvement is consistent across the tested objectives under same exploratory energy.

\begin{table}[H]
    \centering
    \caption{Success rates over $100$ independent runs with $N = 50$ explorers.}
    \label{tab:distance_success_sigma1_gaussian}
    \begin{tabular}{l c c c}
        \toprule
        Objective function & Additive CBO & Anisotropic CBO & TSO \\
        \midrule
        Charged particle in an EM field & $26$ & $1$ & $77$ \\
        Double pendulum                      & $7$  & $0$ & $25$ \\
        Charged particle in non-uni. magnetic field         & $42$ & $0$ & $90$ \\
        Damped particle in a multi-well potential           & $6$  & $1$ & $95$ \\
        \bottomrule
    \end{tabular}
\end{table}

\begin{table}[H]
    \centering
    \caption{Success rates over $100$ independent runs with $N=100$ explorers.}
    \label{tab:success_rates_N100}
    \begin{tabular}{l c c c}
        \toprule
        Objective function & Additive CBO & Anisotropic CBO & TSO \\
        \midrule
        Charged particle in an EM field                 & $42$ & $0$ & $99$ \\
        Double pendulum                                 & $4$  & $0$ & $21$ \\
        Charged particle in a non-uni. magnetic field & $55$ & $0$ & $93$ \\
        Damped particle in a multi-well potential       & $15$ & $0$ & $95$ \\
        \bottomrule
    \end{tabular}
\end{table}

\subsection{Low-dimensional Bayesian inverse problem : comparison between CBS, EKS and TSO + Kalman calibration}
\label{tso_subsec:low_dimensional_experiment}

We compare consensus-based sampling \cite{carrillo2022consensus_sampling}, the ensemble Kalman sampler \cite{garbunoinigo2019interacting}, and the proposed two-stage TSO with Kalman calibration (see Section~\ref{tso_sec_uq}) on a nonlinear Bayesian inverse problem. We compare the methods under an identical forward-model evaluation budget, since evaluations of the forward map are assumed to constitute the dominant computational cost. We borrow this example from \cite{garbunoinigo2019interacting} (see also \cite{herty2018kinetic,carrillo2022consensus_sampling}). Here, we provide details for the sake of completeness. 

Let $x=(x_1,x_2)^\top\in\mathbb{R}^2$ denote the unknown parameter. We consider the one-dimensional boundary-value problem
\begin{equation*}
    -\frac{\mathrm{d}}{\mathrm{d}v}
    \left(\exp(x_1)\frac{\mathrm{d}p}{\mathrm{d}v}
    \right) =1, \qquad v\in[0,1],
    \label{tsoeqn:low_dim_pde}
\end{equation*}
subject to
\begin{equation*}
    p(0)=0,  \qquad
    p(1)=x_2.
\end{equation*}
The explicit solution is available:
\begin{equation*}
    p(v;x) =x_2v
    +  \exp(-x_1)\left(-\frac{v^2}{2}+\frac{v}{2}\right).\label{tso_eqn_low_dim_explicit_solution}
\end{equation*}
The forward operator consists of observations of $p$ at two locations, $\mathcal{G}(x)
    =  (p(0.25;x), p(0.75;x))$,
and the observed data are $y= (27.5,    79.7)$. We assume additive Gaussian observational noise, i.e. $y=\mathcal{G}(x)+\zeta$, $\zeta\sim\mathcal{N}(0,\Gamma)$, and $\Gamma=0.1^2 I_2$,
with the Gaussian prior $ x\sim\mathcal{N}(0,\Gamma_0)$, $\Gamma_0=10^2 I_2$. 
The posterior density is therefore  $   \pi(x\mid y)
    \propto
    \exp\big(-\Phi(x)\big)$,
where the negative log-posterior, up to an additive constant, is
\begin{equation}
    \Phi(x) =
    \frac{1}{2}
    \left| \mathcal{G}(x)-y
    \right|_{\Gamma}^{2}
    +
    \frac{1}{2}
    \left|x\right|_{\Gamma_0}^{2}.
    \label{eq:negative_log_posterior}
\end{equation}
Here, $|v|_A^2=v^\top A^{-1}v$. 
The reference posterior mean and covariance (computed via quadrature) are
\begin{equation*}
    m_{\mathrm{ref}}=
    \begin{pmatrix}-2.7139\\
        104.3458
    \end{pmatrix},
    \qquad
    C_{\mathrm{ref}}
    =
    \begin{pmatrix}  0.01291 & 0.02882\\
        0.02882 & 0.08078
    \end{pmatrix}.
    \label{tso_eqn:reference_moments}
\end{equation*}
The reference marginal $95\%$ credible intervals are
\begin{equation}
    x_1\in[-2.9194,-2.4740],
    \qquad
    x_2\in[103.7866,104.9008].
    \label{tso_eqn_refere_intervals}
\end{equation}
All ensemble methods have $N = 1000$ particles and the same initial ensemble $x_1^{(j)}\sim\mathcal{N}(0,1)$, $    x_2^{(j)}\sim\mathcal{U}(L,U)$, $  j=1,\ldots,N$.
This initialization follows the numerical setup commonly used for this test problem and is distinct from the Bayesian prior entering \eqref{eq:negative_log_posterior}. In particular, the prior remains $\mathcal{N}(0,10^2I_2)$ when evaluating the posterior objective. 

We compare with two other methods CBS \cite{carrillo2022consensus_sampling} and EKS \cite{garbunoinigo2019interacting}. For CBS, the particle update \cite[Section~2.5]{carrillo2022consensus_sampling} is
\begin{equation}
    X_{k+1}^{j, N}
    = \m_{k}^{\alpha} +
    \beta( X_k^{j, N}-\m_{k}^\alpha)
    +
    \sqrt{(1-\beta^2)(1+\alpha)}\,
    C_{k}^{1/2}\xi_k^{j},
    \label{eq:cbs_update}
\end{equation}
where $
    C_{k} = \sum_{j=1}^{N}
    w_k^j(X_k^{j,N}-\m_{k}^\alpha)
    (X_k^{j,N}-\m^{\alpha}_k)^\top$ with $
    w_k^{j}= \frac{\exp(-\alpha\Phi(X_k^{j,N}))}{\sum_{l=1}^{N}
        \exp(-\alpha\Phi(X_k^{l,N}))}$. As in \cite{carrillo2022consensus_sampling}, we use $\alpha=0.5$ and $\beta=0.5$ and perform $30$ updates. 

The set-up for implementation of ensemble Kalman sampler is from \cite{garbunoinigo2019interacting} based on linearly implicit split-step discretization of the derivative-free interacting particle system:
\begin{align*}
    X_{k+1}^{i,N} &=
    \hat{X}_{k+1}^{i, N} +
    \sqrt{2 h_k}\,
    C_k^{1/2}\xi_k^{i},
    \qquad
    \xi_k^{i}\sim\mathcal{N}(0,I_2), \\
    (I+ h_k C_k\Gamma_0^{-1})\hat{X}_{k+1}^{i,N} & = X_k^{i,N} - h_k D_k,
\end{align*}
where  $C_k = \frac{1}{N}\sum_{j=1}^{N}
    ( X_k^{j,N}-\bar{X}_k)
    (X_k^{j,N}-\bar{X}_k)^\top$ and $D_k =
    \frac{1}{N}
    \sum_{j=1}^{N}\langle \mathcal{G}(X_k^{j,N})-
        \bar{\mathcal{G}}_k,\mathcal{G}(X_k^{j,N})-y\rangle_{\Gamma}(X_k^{j,N}-\bar{X}_k)$ 
with  $\bar{X}_k =\frac{1}{N}\sum_{j=1}^{N}X_k^{j,N}$, $\bar{\mathcal{G}}_k = \frac{1}{N}\sum_{i=1}^{N}\mathcal{G}(X_k^{i,N})$
and adaptive time stepping $ h_k ={1}/(|D_k|+10^{-8})$.

 We compare CBS and EKS with TSO + Kalman calibration. A TSO particle system is first used to identify the region of high posterior probability. A few steps of deterministic Kalman calibration are then applied to construct an approximation of the local posterior distribution.
The parameters used in the TSO stage are
$h=0.4$, $\alpha_{\max}=30$, $\eta=1$ (we take $\eta_2 = 0$ and relabel $\eta_1$ as $\eta)$, $\sigma =0.8$, $\lambda_J =0.5$, $\sigma_J=2$, $\phi =\frac{\pi}{6}$, $\lambda_Y =10$, $
    \kappa_1 =1 , \kappa_2 = 1$.
 At the end of TSO steps, let $C_{\mathrm{TSO}}$ denote the ensemble covariance of the ensemble of TSO explorers, i.e.  
\begin{equation}
    C_{\mathrm{TSO}} =
    \frac{1}{N}
    \sum_{j=1}^{N}
    ( X^{j,N}
        - \bar{X})(X^{j,N} -
        \bar{X})^\top.
\end{equation}
We initialize as a Gaussian ensemble with mean equal to treasure hunter's position $Y_{\mathrm{TSO}}$ and empirical covariance exactly equal to $C_{\mathrm{TSO}}$. This construction uses the scale, anisotropy, and correlation learned by the explorers, while avoiding the transfer of any other incidental non-Gaussian structure. After that we employ $L_K$ steps of Kalman calibration described in \eqref{tsoeqn_kalman_gain}-\eqref{tsoeq:kalman_ensemble}.  
CBS and EKS use one initial and $30$ updated ensemble evaluations, therefore total $(1+30)N$ evaluations. For TSO + Kalman calibration, the localization stage uses  $(1+ n_{\mathrm{TSO}})N$ evaluations
while the calibration stage uses $(1+ L_K)N$. We ensure that all three methods use exactly the same forward-model evaluation budget. In this experiment, we take $L_K = 5$ and $n_{\mathrm{TSO}} = 24$.

TSO performs derivative-free optimization  while the Kalman updates are used only after the ensemble has entered a region where local regression is more credible.
A computational advantage is that TSO  does not require matrix operations and they are introduced  during the Kalman calibration steps. By contrast, consensus-based sampling forms and factorizes a weighted covariance at each update, while the ensemble Kalman sampler constructs an empirical covariance and solves an implicit linear system. The dense linear algebra computations may scale as $\mathcal{O}(Nd^2)$ or $\mathcal{O}(d^3)$. Consequently, restricting such operations to a short calibration stage may be beneficial as the parameter dimension increases. This advantage is conditional on the cost structure of the application, since the forward model would still dominate the total runtime.

We first compare the three methods when ensemble is initialized as $\mathcal{N}(0,1)\otimes \mathcal{U}(90,110)$ (as in \cite{carrillo2022consensus_sampling}). Consensus-based sampling gives the most accurate approximation in this experiment. The ensemble Kalman sampler accurately locates the posterior mean but produces a visibly overdispersed ensemble. This overdispersion is also reflected in the $x_2$ credible interval, $[103.6375,104.9603]$
 which is wider than the reference interval.  In this test-case, the conclusion is CBS $>$ TSO + Kalman calibration $>$ EKS as can be seen in Table~\ref{tsotab_close_init_summaries} and Table~\ref{tso_tab_close_init_metrics} as well as Figures~\ref{tso_fig_close_init_particle}-\ref{tso_fig_close_init_marginal}. 
 
 However, once we change the initialization of ensemble to $\mathcal{N}(0,1)\otimes\, \mathcal{U}(60,90)$, the CBS performance degrades drastically. The ensemble collapses in the region defined by initial distribution producing severely biased approximation ($\Phi(\text{mean}) = 7496.7$ against the reference value $54.5$). EKS is essentially unaffected by the change in initialization, since its covariance-preconditioned drift lets particles travel across the parameter space.  
 TSO + Kalman calibration is the most robust of the three: the explorer-hunter search locates the correct basin independently of initialization, and the resulting calibration attains the lowest relative covariance error of any method in this setting ($0.124$ vs. $0.964$ for CBS and vs. $0.487$ for EKS).  
 Note that we only change the initialization, all other involved parameters for the three methods are kept the same. In this test case, it is clear that TSO + Kalman calibration $>$ EKS $\gg$ CBS (see Table~\ref{tso_tab_far_init_summaries} and  Table~\ref{tso_tab_far_initia_metrics} as well as Figures~\ref{tso_fig_far_init_ensemble}-\ref{tso_fig_far_init_marginal}). 
 Also note that even before Kalman calibration, TSO hunter situates itself in the global basin as can be seen in Table~\ref{tso_tab_hunter_position_before_kalman}.

 \begin{table}[H]
\centering
\caption{TSO hunter position at the $24$-th step, i.e., before Kalman calibration.}
\label{tso_tab_hunter_position_before_kalman}
\scriptsize
\renewcommand{\arraystretch}{1.20}
\begin{tabular}{@{}lcc@{}}
\toprule
\textbf{Initialization}
&
\textbf{TSO hunter position} (before Kalman calibration)
&
$\boldsymbol{\Phi}$
\\
\midrule

$\mathcal{N}(0,1)\otimes\mathcal{U}(90,110)$
&
$(-2.7312,\;104.3288)$
&
$54.4927$
\\

$\mathcal{N}(0,1)\otimes\mathcal{U}(60,90)$
&
$(-2.7395,\;104.2913)$
&
$54.4958$
\\

\bottomrule
\end{tabular}
\end{table}

\begin{table}[H]
\centering
\caption{Summary of results with ensemble initialization $ \mathcal{N}(0, 1) \otimes \mathcal{U}(90, 110)$.
}\label{tsotab_close_init_summaries}
\scriptsize
\renewcommand{\arraystretch}{1.25}
\resizebox{\textwidth}{!}{
\begin{tabular}{@{}lccc@{}}
\toprule
\textbf{Reference}
&
\textbf{CBS}
&
\textbf{EKS}
&
\textbf{TSO + Kalman calibration}
\\
\midrule

$\small
\text{Mean}=
(-2.7139,\;104.3458)$
&
$\small
(-2.7167,\;104.3418)$
&
$\small
(-2.7184,\;104.3184)$
&
$\small
(-2.7053,\;104.3204)$
\\[1mm]

$\small
\text{Covariance}=
\begin{pmatrix}
0.0129 & 0.0288\\
0.0288 & 0.0808
\end{pmatrix}$
&
$\small
\begin{pmatrix}
0.0119 & 0.0275\\
0.0275 & 0.0811
\end{pmatrix}$
&
$\small
\begin{pmatrix}
0.0180 & 0.0415\\
0.0415 & 0.1212
\end{pmatrix}$
&
$\small
\begin{pmatrix}
0.0138 & 0.0259\\
0.0259 & 0.0703
\end{pmatrix}$
\\[3mm]

$\small
\text{Marginal variances}=
(0.0129,\;0.0808)$
&
$\small
(0.0119,\;0.0811)$
&
$\small
(0.0180,\;0.1212)$
&
$\small
(0.0138,\;0.0703)$
\\[1mm]

$\small
\begin{aligned}
x_1 &: [-2.9194,-2.4740],\\
x_2 &: [103.7866,104.9008]
\end{aligned}$
&
$\small
\begin{aligned}
x_1 &: [-2.9260,-2.5063],\\
x_2 &: [103.7642,104.8601]
\end{aligned}$
&
$\small
\begin{aligned}
x_1 &: [-2.9825,-2.4604],\\
x_2 &: [103.6375,104.9603]
\end{aligned}$
&
$\small
\begin{aligned}
x_1 &: [-2.9295,-2.4724],\\
x_2 &: [103.8158,104.8511]
\end{aligned}$
\\[3mm]

$\small
\Phi(\text{mean})=54.5115$
&
$\small
54.5054$
&
$\small
54.5300$
&
$\small
54.6301$
\\

\bottomrule
\end{tabular}
}
\end{table}

\begin{table}[H]
\centering
\caption{Error with ensemble initialization $x_1 \sim \mathcal{N}(0, 1), x_2\sim \mathcal{U}(90, 110)$.}
\label{tso_tab_close_init_metrics}
\scriptsize
\renewcommand{\arraystretch}{1.20}
\resizebox{\textwidth}{!}{
\begin{tabular}{@{}lccc@{}}
\toprule
\textbf{Metric}
&
\textbf{CBS}
&
\textbf{EKS}
&
\textbf{TSO + Kalman calibration}
\\
\midrule

Mean absolute error
&
$(0.0029,\;0.0039)$
&
$(0.0045,\;0.0273)$
&
$(0.0086,\;0.0253)$

\\

Covariance absolute error
&
$\begin{pmatrix}
0.0010&0.0013\\
0.0013&0.0003
\end{pmatrix}$
&
$\begin{pmatrix}
0.0051&0.0127\\
0.0127&0.0404
\end{pmatrix}$
&
$\begin{pmatrix}
0.0009&0.0030\\
0.0030&0.0105
\end{pmatrix}$
\\[3mm]

Relative covariance Frobenius error
&
$0.0234$
&
$0.4872$
&
$0.1240$
\\

Marginal Wasserstein distances
&
$(0.0078,\;0.0091)$
&
$(0.0163,\;0.0541)$
&
$(0.0107,\;0.0280)$
\\

\bottomrule
\end{tabular}
}
\end{table}


\begin{table}[H]
\centering
\caption{Summary of results with ensemble initialization $\mathcal{N}(0,1)\otimes \mathcal{U}(60, 90)$.}
\label{tso_tab_far_init_summaries}
\scriptsize
\renewcommand{\arraystretch}{1.25}
\resizebox{\textwidth}{!}{
\begin{tabular}{@{}lccc@{}}
\toprule
\textbf{Reference}
&
\textbf{CBS}
&
\textbf{EKS}
&
\textbf{TSO + Kalman calibration}
\\
\midrule

$\small
\text{Mean}=
(-2.7139,\;104.3458)$
&
$\small
(-2.2049,\;89.8390)$
&
$\small
(-2.7184,\;104.3184)$
&
$\small
(-2.7067,\;104.3165)$
\\[1mm]

$\small
\text{Covariance}=
\begin{pmatrix}
0.0129 & 0.0288\\
0.0288 & 0.0808
\end{pmatrix}$
&
$\small
\begin{pmatrix}
0.0013 & 0.0017\\
0.0017 & 0.0023
\end{pmatrix}$
&
$\small
\begin{pmatrix}
0.0180 & 0.0415\\
0.0415 & 0.1212
\end{pmatrix}$
&
$\small
\begin{pmatrix}
0.0137 & 0.0258\\
0.0258 & 0.0703
\end{pmatrix}$
\\[3mm]

$\small
\text{Marginal variances}=
(0.0129,\;0.0808)$
&
$\small
(0.0013,\;0.0023)$
&
$\small
(0.0180,\;0.1212)$
&
$\small
(0.0137,\;0.0703)$
\\[1mm]

$\small
\begin{aligned}
x_1 &: [-2.9194,-2.4740],\\
x_2 &: [103.7866,104.9008]
\end{aligned}$
&
$\small
\begin{aligned}
x_1 &: [-2.2772,-2.1334],\\
x_2 &: [89.7468,89.9305]
\end{aligned}$
&
$\small
\begin{aligned}
x_1 &: [-2.9825,-2.4604],\\
x_2 &: [103.6375,104.9604]
\end{aligned}$
&
$\small
\begin{aligned}
x_1 &: [-2.9305,-2.4744],\\
x_2 &: [103.8120,104.8473]
\end{aligned}$
\\[3mm]

$\small
\Phi(\text{mean})=54.5115$
&
$\small
7496.7148$
&
$\small
54.5300$
&
$\small
54.6299$
\\

\bottomrule
\end{tabular}
}
\end{table}

\begin{table}[ht]
\centering
\caption{Error with ensemble initialization $x_1 \sim \mathcal{N}(0, 1), x_2\sim \mathcal{U}(60, 90)$.}
\label{tso_tab_far_initia_metrics}
\scriptsize
\renewcommand{\arraystretch}{1.20}
\resizebox{\textwidth}{!}{
\begin{tabular}{@{}lccc@{}}
\toprule
\textbf{Metric}
&
\textbf{CBS}
&
\textbf{EKS}
&
\textbf{TSO + Kalman calibration}
\\
\midrule

Mean absolute error
&
$(0.5089,\;14.5067)$
&
$(0.0045,\;0.0273)$
&
$(0.0072,\;0.0293)$
\\
Covariance absolute error
&
$\begin{pmatrix}
0.0116&0.0271\\
0.0271&0.0785
\end{pmatrix}$
&
$\begin{pmatrix}
0.0051&0.0127\\
0.0127&0.0404
\end{pmatrix}$
&
$\begin{pmatrix}
0.0008&0.0030\\
0.0030&0.0105
\end{pmatrix}$
\\[3mm]

Relative covariance Frobenius error
&
$0.9640$
&
$0.4872$
&
$0.1242$
\\

Marginal Wasserstein distances
&
$(0.5089,\;14.5067)$
&
$(0.0162,\;0.0541)$
&
$(0.0096,\;0.0313)$
\\
\bottomrule
\end{tabular}
}
\end{table}

\begin{figure}[ht]
    \centering
    \includegraphics[width=1\linewidth]{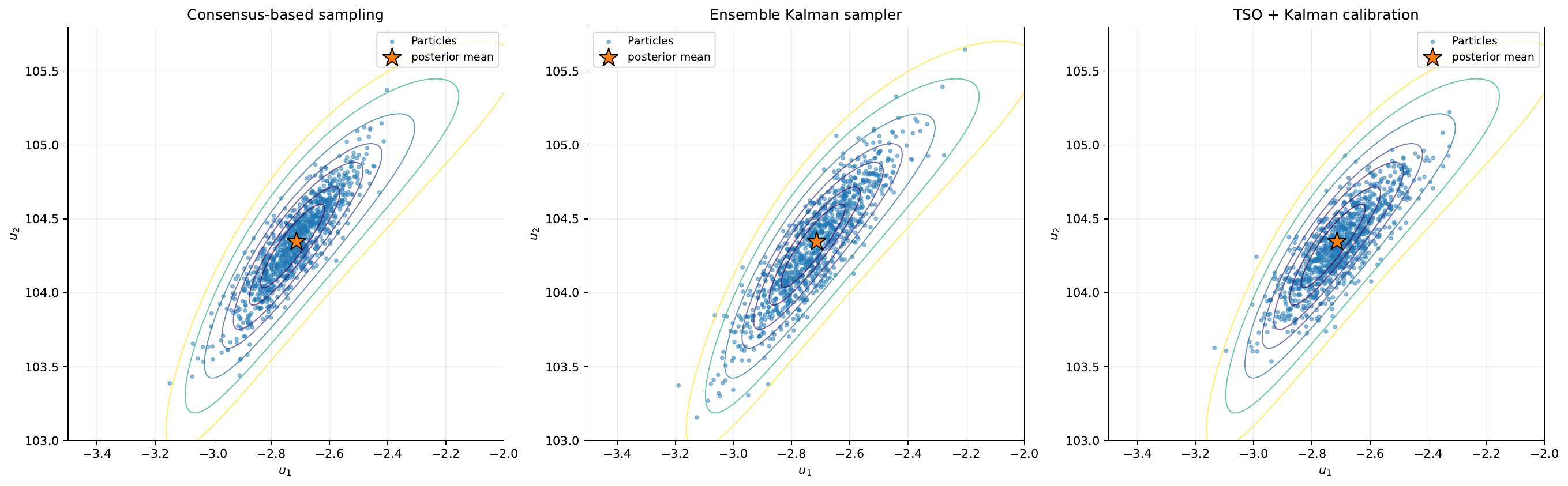}
    \caption{Illustration depicting particles at $30$-th (i.e. last) iteration for three methods : CBS, EKS and TSO + Kalman calibration where for all methods particles were initialized from $ \mathcal{N}(0,1)\otimes \mathcal{U}(90, 110)$.}
    \label{tso_fig_close_init_particle}
\end{figure}

\begin{figure}[H]
    \centering
    \includegraphics[width=1\linewidth]{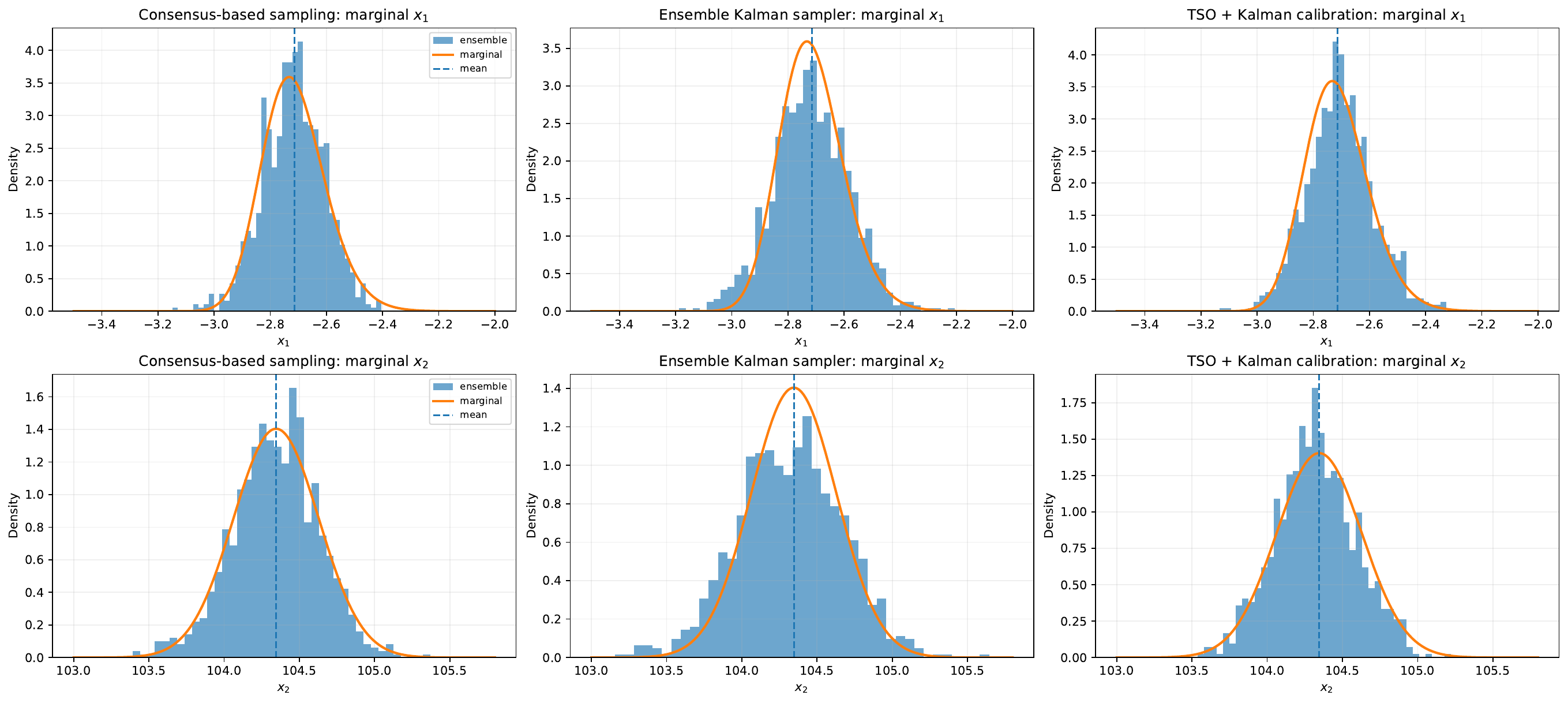}
    \caption{Marginal posterior approximations for $x_1$ and $x_2$ with initial ensemble
distribution $\mathcal{N}(0,1)\otimes\mathcal{U}(90,110)$.}
    \label{tso_fig_close_init_marginal}
\end{figure}

\begin{figure}[ht]
    \centering
    \includegraphics[width=1\linewidth]{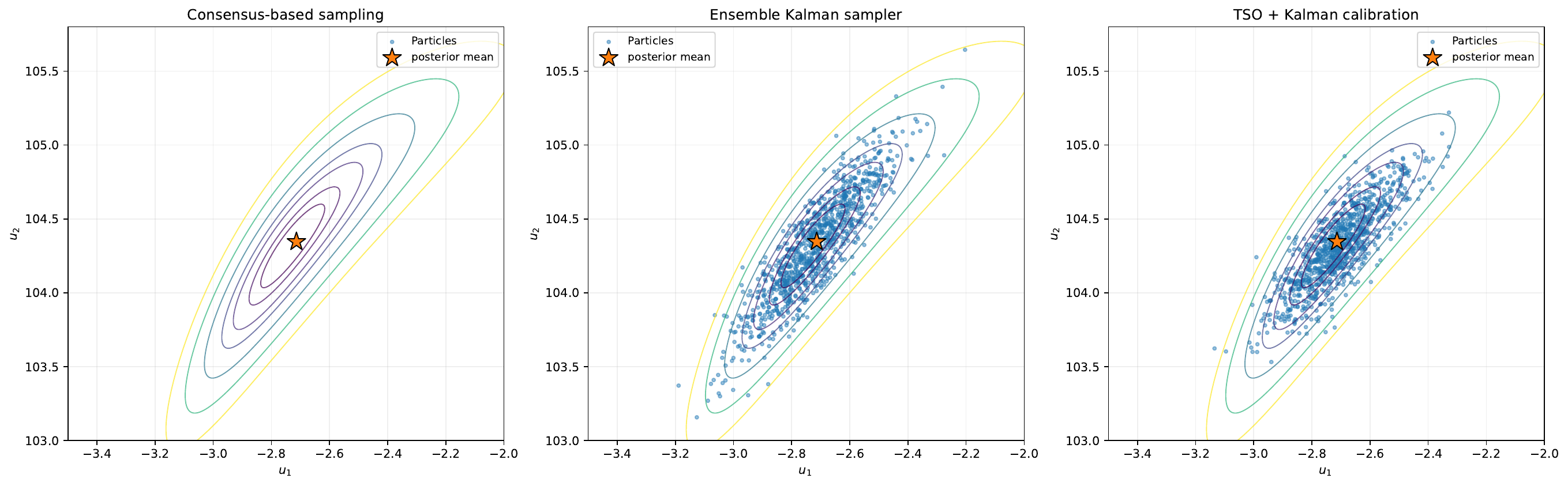}
    \caption{Illustration depicting particles at $30$-th (i.e. last) iteration for three methods : CBS, EKS and TSO + Kalman calibration where for all methods particles were initialized from $ \mathcal{N}(0,1)\otimes \mathcal{U}(60, 90)$.}
    \label{tso_fig_far_init_ensemble}
\end{figure}

\begin{figure}[H]
    \centering
    \includegraphics[width=1\linewidth]{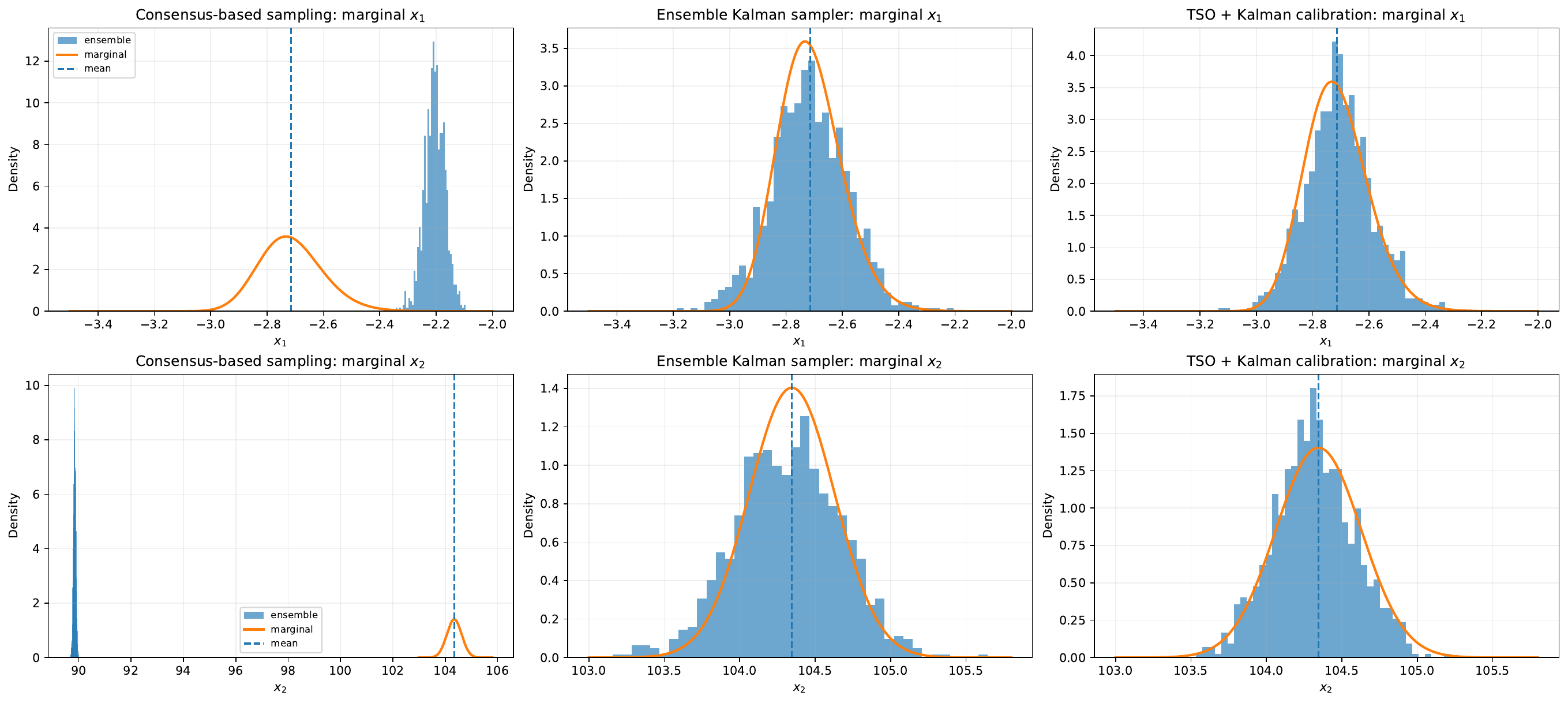}
    \caption{Marginal posterior approximations for $x_1$ and $x_2$ with initial ensemble
distribution $\mathcal{N}(0,1)\otimes\mathcal{U}(60,90)$.}
    \label{tso_fig_far_init_marginal}
\end{figure}

\section{Concluding Remarks}
We introduced Treasure Search Optimization, an interacting particle method that assigns exploration and exploitation to two distinct agents rather than balancing them within a single swarm.

On the analytical side, we established well-posedness of the mean-field limit which is a conditional McKean-Vlasov jump-diffusion SDE by constructing the fixed point pathwise along the hunter's jump times and selecting an adapted solution. We then characterized the steady state of the coupled dynamics and, through a quantitative Laplace approximation applied to the self-consistent equilibrium equation, proved that the hunter settles within $\mathcal{O}(1/\alpha)$ of the global optimizer. 

The conditional mean dynamics further revealed a macroscopic structure. The consensus drift coincides with the gradient of a Gaussian-smoothed free energy (see Figure~\ref{fig_from_eyes_of_swarm}), which clarifies why the swarm descends an effectively regularized landscape and remains insensitive to small spurious peaks that trap individual particles. This viewpoint is not specific to TSO, and offers a qualitative lens on consensus-based optimization and sampling more broadly. We also indicated how the equilibrium explorer cloud, whose covariance is fixed by algorithmic parameters, can be re-calibrated into a geometry-aware ensemble for uncertainty quantification in inverse problems. On the four ODE-constrained problems considered, TSO consistently outperformed additive and anisotropic consensus-based optimization with a modest choice of jump intensities and diffusion parameters. TSO + Kalman calibration also matched or exceeded CBS and EKS in accuracy while remaining robust to initialization. 

Several natural questions remain. The analysis in this paper is carried out at the level of the mean-field system. A quantitative conditional propagation of chaos result establishing convergence of the TSO explorers to the conditional mean-field limit, together with convergence to the equilibrium, is the subject of ongoing work. TSO is formulated as a hunter-explorer architecture for separating exploration and exploitation. In the present paper, this architecture is studied for a consensus-based explorer interaction. A natural direction is to treat the explorer drift $b$ as a design degree of freedom and explore different choices. A further extension is the multi-species setting, in which several explorer populations with distinct interaction mechanisms share a common hunter. We leave the study of such structures to future work.

\section*{Acknowledgment}
This work was supported by the Wallenberg AI, Autonomous Systems and Software Program (WASP) funded by the Knut and Alice Wallenberg Foundation. The computations/data handling were enabled by resources provided by the National Academic Infrastructure for Supercomputing in Sweden (NAISS) at Alvis at the Chalmers Centre for Computational Science and Engineering (C3SE), partially funded by the Swedish Research Council through grant agreement no. 2022-06725. I acknowledge the use of ChatGPT for help in code debugging and assistance in converting prototype code into the one used in numerical experiments. All outputs generated by the LLM were strictly manually reviewed and edited by me. I take full responsibility for the integrity of the code that it matches the methods, parameters, logical flow and and the resulting output as quoted in the paper.

\small
\bibliography{references}
\bibliographystyle{alpha}

\appendix

\section{Details of experiment of Section~\ref{comp_sec_cb_tso}}\label{detal_exp_appendix}
Below are the details of the ODE constrained inverse problem based optimization experiment of Section~\ref{comp_sec_cb_tso}. Here, we mention all the parameters involved which are used to construct the objective function \eqref{tso_eqn_new_6.5} based on generic ODEs \eqref{tso_eqn_new_6.4}. 
\begin{itemize}
    \item
Charged particle in a nonuniform electromagnetic field : The decision variable is the initial velocity $x=v_0=(v_{0,1},v_{0,2})$
and the initial position is $q(0)=(0,0)$. The state is $z=(q,v)\in\mathbb R^4$, and the dynamics is $\dot q=v$, $\dot v= \omega(q) (-v_2, v_1) + E(q)$, where $\omega(q) = \omega_0+\omega_1\sin(kq_1)\cos(kq_2)$, and $
    E(q) =
    E_{\rm strength}(\sin(kq_2), \cos(kq_1))$. 
The parameters are $t_f =8$, $x_{\rm ref}=(0.483,-1.433)$, $\omega_0=2.0$,
    $\omega_1=1.6$, $k=3.0$, $E_{\rm strength}=0.4$ 
and $\omega_v=0.2$. The search domain is $[-5,5]\times[-5,5]$. 

\item Double pendulum: The decision variable is the initial angular velocity $x = \omega_0 = (\omega_1(0), \omega_2(0))$. The initial angles are $(x_1(0), x_2(0)) = (1.1, -0.7)$. The state is $z = (x_1, x_2, \omega_1, \omega_2)$, and let $\delta = x_2 - x_1$. The dynamics are $\dot{x}_1 = \omega_1$, $\dot{x}_2 = \omega_2$,
\begin{align}
    \dot\omega_1 = \frac{m_2 l_1\omega_1^2\sin\delta\cos\delta + m_2g\sin x_2\cos\delta + m_2l_2\omega_2^2\sin\delta - (m_1+m_2)g\sin x_1}{(m_1+m_2)l_1-m_2l_1\cos^2\delta},
\end{align}
\begin{align}
    \dot\omega_2 = \frac{-m_2l_2\omega_2^2\sin\delta\cos\delta + (m_1+m_2)(g\sin x_1\cos\delta - l_1\omega_1^2\sin\delta - g\sin x_2)}{\frac{l_2}{l_1}((m_1+m_2)l_1-m_2l_1\cos^2\delta)}.
\end{align}
The parameters are $t_f = 8$, $x_{\rm ref} = (-0.4, -0.3)$, $m_1 = m_2 = 1$, $l_1 = l_2 = 1$, $g = 9.81$, and $\omega_v = 0.05$. The terminal angular error is computed using wrapped angle differences. The search domain is $[-10, 10] \times [-10, 10]$.

\item Charged particle in non-uni. magnetic field: The decision variable is the initial velocity $x = v_0 = (v_{0,1}, v_{0,2})$, with initial position $q(0) = (0.3, 0)$. The state is $z = (q, v)$, and the dynamics are $\dot{q} = v$ and
\begin{align}
    \dot{v} = B(q) \begin{pmatrix} -v_2 \\ v_1 \end{pmatrix} - c_{\rm conf}q,
\end{align}
where $B(q) = B_0 + B_1\cos(kq_1)\cos(kq_2)$. The parameters are $t_f = 10$, $x_{\rm ref} = (-0.067, 0.2)$, $B_0 = 3.0$, $B_1 = 2.0$, $k = 3.0$, $c_{\rm conf} = 0.4$, and $\omega_v = 0.2$. The search domain is $[-5, 5] \times [-5, 5]$.

\item Damped particle in multi-well potential : The decision variable is the initial momentum $x = p_0 = (p_{0,1}, p_{0,2})$, with initial position $q(0) = (-1, 0)$. The dynamics are $\dot{q} = p$ and $\dot{p} = -\nabla V(q) - \gamma p$, where $\gamma = 0.2$ and
\begin{align}
    \nabla V(q) = \begin{pmatrix} 4q_1(q_1^2-1) + 1.2\cos(4q_1)\cos(3q_2) \\ q_2 - 0.9\sin(4q_1)\sin(3q_2) \end{pmatrix}.
\end{align}
The parameters are $t_f = 8$, $x_{\rm ref} = (-1.86, 0)$, and $\omega_v = 0.1$. The search domain is $[-6, 6] \times [-6, 6]$.

\end{itemize}
\end{document}